
\documentclass[letterpaper,final,leqno,onefignum,onetabnum]{siamltex1213}

\usepackage{amsmath,amssymb,amscd,fancybox}
\usepackage{ifthen,float}
\usepackage{afterpage}
\usepackage{graphics}

\usepackage{subcaption}
\captionsetup[subfigure]{font=footnotesize}
\captionsetup[figure]{font=footnotesize}

\begingroup
    \makeatletter
    \@for\theoremstyle:=definition,remark,plain\do{
        \expandafter\g@addto@macro\csname th@\theoremstyle\endcsname{
            \addtolength\thm@preskip\parskip\onehalfspacing}}
\endgroup

\usepackage{color}
\usepackage{parskip}
\usepackage{comment}




\newcommand\cV{\mathcal V}

\newcommand\cP{\mathcal P}

\newcommand\bnu{\boldsymbol \nu}
\newcommand\bc{\boldsymbol c}

\newcommand\bj{\boldsymbol j}

\newcommand\tPhi{\widetilde \Phi}

\newcommand\hpsi{\widehat \psi}

\newcommand\br{\boldsymbol r}

\newcommand\bx{\boldsymbol x}
\newcommand\by{\boldsymbol y}

\newcommand\bi{\boldsymbol i}
\newcommand\bd{\boldsymbol d}

\newcommand\cF{\mathcal F}
\newcommand\cE{\mathcal E}

\newcommand\cC{\mathcal{C}}
\newcommand\cL{\mathcal{L}}


\renewcommand\Re{\operatorname{Re}}
\renewcommand\Im{\operatorname{Im}}

\newcommand\bbR{\mathbb R}

\newcommand\pa{\partial}

\newcommand\restrictedto{\upharpoonright}




\title{Smoothed corners and scattered waves}


\author{Charles L. Epstein\thanks{Departments of Mathematics and
    Radiology, University of Pennsylvania, Philadelphia, PA
    (\email{cle@math.upenn.edu}). Research supported in part by the
    NSF under grants DMS09-35165, DMS12-05851, and DMS-1507396, the Army
    Research Office under grant W911NF-12-1-0552, and by the Office of
    the Assistant Secretary of Defense for Research and Engineering
    and AFOSR under NSSEFF Program Award FA9550-10-1-0180.}  \and
  Michael O'Neil\thanks{Department of Mathematics, Courant Institute
    and School of Engineering, New York University, New York, NY
    (\email{oneil@cims.nyu.edu}). Questions, comments, or corrections
    to this document may be directed to that email address. Research
    supported in part by the Office of the Assistant Secretary of
    Defense for Research and Engineering and AFOSR under NSSEFF
    Program Award FA9550-10-1-0180.} }

\begin{document}
\maketitle
\slugger{sisc}{xxxx}{xx}{x}{x--x}

\begin{abstract}
  We introduce an arbitrary order, computationally efficient method to
  smooth corners on curves in the plane, as well as edges and vertices
  on surfaces in $\bbR^3.$ The method is local, only modifying the
  original surface in a neighborhood of the geometric singularity, and
  preserves desirable features like convexity and symmetry.  The
  smoothness of the final surface is an explicit parameter in the
  method, and the bandlimit of the smoothed surface is proportional to
  its smoothness. Several numerical examples are provided in the
  context of acoustic scattering. In particular, we compare scattered
  fields from smoothed geometries in two dimensions with those from
  polygonal domains. We observe that significant reductions in
  computational cost can be obtained if merely approximate solutions
  are desired in the near- or far-field. Provided that the smoothing is
  sub-wavelength, the error of the scattered field is proportional to
  the size of the geometry that is modified.
\end{abstract}

\begin{keywords} Corners, scattering, Lipschitz domain, quadrature,
Helmholtz, potential theory, smoothing, rounding
\end{keywords}

\begin{AMS} 45B05, 78M15, 65D10, 65D30, 65N38
\end{AMS}

\pagestyle{myheadings}
\thispagestyle{plain}
\markboth{CORNERS AND WAVES}{C. L. EPSTEIN AND M. O'NEIL}

\numberwithin{equation}{section}

\section{Introduction}

In the numerical solution of boundary value problems for partial
differential equations an especially difficult case arises when the
boundary of the domain has corners (in two dimensions) or edges and
vertices (in three dimensions). Several groups have devoted resources
to solving this problem and have made serious inroads towards
addressing these issues in the context of the classical integral
equations of mathematical physics (acoustic and electromagnetic
scattering, elasticity, etc.) \cite{bremer_2013, bremer_2012,
  bremer_2010, helsing, helsing_2011, helsing_2011b, bruno_2009,
  bremer_2010b, kress_1990, serkh_2016}.  
The resulting numerical schemes often
involve the use of specially designed quadratures which handle not
only singular or weakly-singular integrals but also singular layer
potential densities.  These methods are based on several standard
ideas in modern numerical analysis, namely low-rank
approximations~\cite{cheng_2005, bremer_2012b}, generalized Gaussian
quadratures and adaptive refinement~\cite{yarvin_1998,bremer_2010b},
and (semi-) analytic product integration formulae~\cite{helsing_2014,
  helsing_2014b, hao_2014}. 
More recently, explicit 
exact forms of the solutions to actual layer potential densities
were derived in~\cite{serkh_2016}.
All the numerical tools just mentioned are
now well-developed and require minimal sophistication to use, but can
still be time-consuming, or too special-purpose to
implement. An approach that has not been investigated thoroughly at
this time in the literature is that of solving an analogous scattering
problem from a smoother geometry that is \emph{close} to the original one. In
what follows, by \emph{close} we mean different only in a (small)
neighborhood of the geometric singularities (e.g. corners in
two dimensions).

Lacking, up to this time, is a reliable, systematic, computationally
simple method for smoothing such irregularities that also retains
desirable geometric features, such as convexity or local symmetries.
In this note we discuss several methods for doing this, including a
simple convolution method as well as introduce a new geometric
method, particularly useful for regularizing surfaces in
three dimensions.
Our
methods are tailored for use with polygons in two dimensions and
polyhedra in three dimensions. 
However, because the modifications are done
locally, this approach can be applied to more general shapes (namely
curves which intersect at their endpoints) through composition with
diffeomorphisms. Indeed, our method already employs such compositions
in the three dimensional case.

Corner and edge rounding methods are useful
for two reasons. 
First, in the context of the solution of scattering problems
via integral equations, smoothing geometric singularities on a
sub-wavelength scale provides a means by which to apply standard
numerical quadratures~\cite{alpert, klockner_2013, helsing_2011} for
weakly-singular integrals along smooth boundaries, instead of the more
complicated schemes required in the neighborhood of corners.
In two dimensions, our numerical examples show that 
convergence is roughly first-order in the scattered field,
 both in the near- and far-fields.  
In three dimensions, reducing the number of discretization nodes is
particularly useful because of the relative cost of even the fastest
solvers. 
State of the art, high-order accurate 
solvers in three dimensions include those by
Bremer, Gillman, Gimbutas, and Martinsson~\cite{bremer_2012c,bremer_2015} 
and Bruno~\cite{bruno_2012}.

Second, since the schemes to be presented only change the
geometry locally, they may lead to a new class of algorithms which can
be incorporated into modern computer-aided design (CAD) and
engineering (CAE) software packages.  The regularity of the smoothed
surface can be precisely controlled in the neighborhood of the
singularity. Applications in fine-grained polishing of machined
mechanical parts are straightforward.  This paper  investigates the
advantages and disadvantages of solving a scattering problem from a
nearby smoothed geometry instead of the original non-smooth one.

We organize the remainder of the paper as follows.
Section~\ref{sec-review} reviews standard integral equation
formulations of acoustic scattering phenomena, as well as both the
analytic regularity results of scattering from geometries with corners
and the numerical techniques that have been developed to compute
them.  Section~\ref{sect2d} describes a straightforward and systematic
way to smooth the corners of polygons in two dimensions. The method
can be extended to regions with piecewise smooth boundaries via the
application of a diffeomorphism.  Several numerical experiments are
presented to illustrate the heuristics of the approach.
Section~\ref{sec-3d} details several
methods for smoothing polyhedra in three dimensions. The methods of
Section~\ref{sect2d} are extended to three dimensions and a new
geometric method is introduced which is applicable in most cases.
Section~\ref{sec-general} puts all the previous sections together and
gives a recipe for smoothing a general polyhedron in three dimensions.
Section~\ref{sect4} reviews some analytical methods that can be used
to construct the diffeomorphisms required by the three-dimensional
methods of Section~\ref{sec-3d}.  Lastly, the conclusions in
Section~\ref{sec-conclusions} discuss drawbacks and difficulties
with our method, as well as points to future areas of research and
applications.  Numerical experiments are included throughout the paper
to demonstrate the application of scattering from smoothed geometries,
as well as to visually describe the results of the smoothing
techniques.

\section{Scattering in singular geometries} \label{sec-review}

There are  two questions that require answers when studying
scattering (acoustic, electrostatic and electromagnetic, etc.) in
singular geometries using integral equations. First, in the
neighborhood of a corner or edge, {\em what regularity can we expect
  in the solution for data with a given smoothness?} And second, {\em
  if a solution exists, which can be represented in terms of a
  layer-potential density, is the density continuous and how can it be
  numerically calculated?} The first question has been studied in
detail by Dauge, etc \cite{costabel_2000, dauge_1988}. The latter
question is mainly an exercise in numerical integration, and has been
thoroughly studied by Bremer, Bruno, Helsing, etc. See
\cite{bremer_2013, bruno_2009, helsing} for more details.  Often, the
numerical solution is a combination of sophisticated quadrature
schemes coupled with an adaptive discretization of the geometry (in
order to correctly resolve complicated layer potential densities).  We
now give a very brief review of some results in both of these areas.

\subsection{An integral equation approach}

Almost all of the classical partial differential equations of
mathematical physics can be reformulated in an equivalent integral
equation form~\cite{guenther_lee}. The integral equation form has many
advantages, namely the direct handling of unbounded domains in the
case where the solution of a PDE reduces to a boundary integral
equation.  Furthermore, when the integral equation is Fredholm of the
second kind, as is often the case, provable bounds exist on the
accuracy of the solution which are directly related to the order of
the quadrature rule used in the discretization~\cite{anselone_1964,
  anselone_1971}. In this section,  
we summarize a basic Nystr\"om-type discretization
of an integral equation for the Helmholtz equation
 that can be used to solve an exterior acoustic scattering problem.

Time-harmonic acoustic wave propagation in
homogeneous free-space (we address the two-dimensional version here)
is governed by the Helmholtz equation,
\begin{equation}\label{eq-helmholtz}
\left( \Delta + k^2 \right) u(\bx) = 0 \qquad \text{in } \bbR^2,
\end{equation}
where $u$ is related to the acoustic pressure and $k$ is related to
the wavenumber of the field, namely $k = \omega/c$, where $\omega$ is
the angular velocity and $c$ is the speed of sound in the medium.  In
particular, often one is interested in the solution to a {\em
  scattering} problem in the presence of some inclusion $\Omega$,
where the total pressure field $u^{tot}$ is the sum of an incoming
field $u^{inc}$ and a scattered field $u$.  If the boundary of the
inclusion is given by $\Gamma$, then {\em sound-hard} scattering
phenomena can be formulated as the following boundary value problem:
\begin{equation}\label{eq-hard}
  \begin{aligned}
    \left( \Delta + k^2 \right) u^{tot}(\bx) &= 0 &\qquad &\text{in } \bbR^2
    \setminus \Omega, \\
    \frac{\partial u^{tot}(\bx)}{\partial n} &= 0 &\qquad &\text{on }
    \Gamma,
  \end{aligned}
\end{equation}
where $\partial/\partial n$ represents the derivative with respect
to the outward normal to $\Gamma$. This boundary value problem is
also known as the Neumann scattering problem. Dirichlet boundary
conditions $u^{tot} = 0$ along $\Gamma$ correspond to sound-soft
scattering problems.
The solution
to~\eqref{eq-hard} is unique under a suitable decay condition, known
as a Sommerfeld radiation 
condition, on the scattered field $u$. In particular, in two
dimensions, the scattered field $u$ must satisfy:
\begin{equation}
\lim_{|\bx| \to \infty} \sqrt{|\bx|} \left( \frac{\partial }{\partial r}
u(\bx) - ik u(\bx) \right)=0, 
\end{equation}
and $\partial/\partial r$ is understood to be differentiation in the
radial direction. It is well-known that the Green's function
for~\eqref{eq-helmholtz} is given in terms of the zeroth order Hankel
function of the first kind, $H_0^{(1)}$, and is normalized as:
\begin{equation}
  g_k(\bx)  = \frac{i}{4} H_0^{(1)}\left( k |\bx| \right).
\end{equation}
Using this Green's function, a solution to~\eqref{eq-hard} can be
expressed in terms of a single-layer potential
\begin{equation}
  \begin{aligned}
    u(\bx) &= \mathcal S_k[\sigma](\bx) \\
      &=\int_\Gamma g_k(|\bx - \by|) \, \sigma(\by) \, ds(\by),
  \end{aligned}
\end{equation}
where $s$ is arclength along $\Gamma$. After taking the proper limit
as $\bx \to \Gamma$ from the exterior, this representation results
in the second-kind integral equation for the density
$\sigma$:
\begin{equation}
\frac{1}{2} \sigma + \mathcal S_k'[\sigma] = - 
  \frac{\partial }{\partial n} u^{inc} \qquad \text{on } \Gamma,
\end{equation}
or more explicitly,
\begin{equation}\label{eq-cont}
\frac{1}{2}\sigma(\bx) + \int_\Gamma \left[ \frac{\partial}{\partial
n_x} g_k(\bx, \by) \right] \sigma(\by) \, ds(\by) = -
\frac{\partial }{\partial n} u^{inc}(\bx) \qquad \text{for } \bx \in
\Gamma.
\end{equation}
The operator $\mathcal S_k'$ represents the normal derivative of a
single-layer potential.  If $\Gamma$ is $\cC^1,$ then the integral
in~\eqref{eq-cont} is weakly-singular and can be evaluated using
specially designed quadrature rules~\cite{hao_2014}.  There are
several approaches to discretizing the continuous integral
equation~\eqref{eq-cont}, namely Galerkin, collocation, qualocation,
and Nystr\"om discretizations ~\cite{dahlquist, atkinson_2009}. The
methods of this paper apply to all of these approaches (under suitable
small changes); we briefly describe the Nystr\"om method for its
simplicity.

The Nystr\"om discretization of~\eqref{eq-cont} replaces continuous
functions and integrals by samples and sums of samples. Namely, for a
given quadrature rule consisting of nodes and weights
$\{\bx_j,w_{j\ell} \}$  for the
integral appearing in~\eqref{eq-cont}, we approximate the solution
$\sigma(\bx_j) \approx \sigma_j$ at each node $\bx_j$ as the solution
to the system of equations:
\begin{equation}\label{eq-discrete}
\frac{1}{2} \sigma_j + \sum_{\ell} w_{j\ell} \, \frac{\partial}{\partial
n_{x_j}} g_k(\bx_j, \bx_\ell) \, \sigma_\ell= -u^{inc}(\bx_j),
\end{equation}
for all $j$.  Here we have explicitly shown that the quadrature
weights can be a function of the {\em outgoing} node $\bx_j$. As the
number of discretization points or order of quadrature increase,
$\sigma_j$ approaches the exact solution $\sigma(\bx_j)$.  The
previous linear system can be solved directly if the resulting linear
system is small enough, or for larger systems using iterative (fast
multipole methods and GMRES, etc.)~\cite{wideband2d, saad_1986} or
fast direct solvers \cite{gillman_2012, ho_2012}.

It should be noted that integral equation~\eqref{eq-cont} fails to be
uniquely solvable at a discrete set of $k$'s, known as
spurious resonances. This is not a failure of the uniqueness
properties of the PDE, but rather a failure
in the particular choice of integral representation. Choosing what is known as
a {\em combined-field} representation can result in a uniquely
solvable integral equation, albeit at the cost of a slightly more
complicated formulation~\cite{colton_kress, bruno_2012}. One possible
combined-field (or regularized) representation
of this type is of the form:
\begin{equation}\label{eq_neu_cfie}
  u = \mathcal S_k[\sigma] + \alpha \, \mathcal D_k \mathcal S_0
  [\sigma],
\end{equation}
where $\alpha$ is a user-chosen complex-valued parameter, 
$\mathcal D$ is known as the double-layer potential, given by
\begin{equation}
  \mathcal D_k[\sigma](\bx) = \int_\Gamma \left[ \frac{\partial}{\partial
n_y} g_k(\bx, \by) \right] \sigma(\by) \, ds(\by),
\end{equation}
and $\mathcal S_0$ is a single-layer potential corresponding to the
Green's function for Laplace's equation:
\begin{equation}
  \mathcal S_0[\sigma](\bx) = \int_\Gamma \frac{1}{2\pi}
  \ln\frac{1}{|\bx-\by|}
  \, \sigma(\by) \, ds(\by).
\end{equation}
There are many other regularizations that one may use, and this is the
subject of ongoing research (especially in the large-$k$ regime).  We
make a point to explicitly state the form of the integral
representation for numerical experiments appearing later in the paper.

\subsection{Analytic results in singular geometries}

In the previous section we discussed the process by which the
Helmholtz boundary value problem~\eqref{eq-hard} for the field $u$ is
reformulated as a boundary integral equation for a separate unknown
layer potential density $\sigma$. We have not, however, discussed the
effect that the geometry has on the solution $\sigma$ (assuming that
the data $u^{inc}$ is smooth). The regularity of the solution $\sigma$
to the integral equation is strongly affected by the
presence of corners on the boundary $\Gamma$, the boundary data, and 
details of the local geometry, e.g. whether the corners are
re-entrant, acute, obtuse, etc.

On smooth domains, the layer potential operators $\mathcal S_k$,
$\mathcal S'_k$, and $\mathcal D_k$ are compact, classical
pseudodifferential operators and therefore the invertibility of the
associated second-kind integral equation follows from the Fredholm
alternative~\cite{fabes_1977, fabes_1978}. The mapping properties on
Sobolev and H\"older spaces are well-known and essentially optimal.
However, when the domain is merely continuous and not everywhere
differentiable,
these operators cease to be compact.  While canonical PDE results have
existed for some time, it is a relatively recent result in functional
analysis that the classical integral equation corresponding to the
interior Dirichlet problem for Laplace's equation, namely
\begin{equation}
\frac{1}{2} \rho(\bx) + \int_\Gamma \left[ \frac{\partial}{\partial
n_y} \frac{1}{2\pi} \ln\frac{1}{| \bx - \by |}
\right] \rho(\by) \, ds(\by) = f(\bx), \qquad \text{for } \bx \in
\Gamma,
\end{equation}
where $\Gamma$ bounds some Lipschitz domain $D$, is invertible on $\mathcal
L_2$~\cite{verchota_1984}.  Similar results exist for the Neumann problem as
well, and, with some work, extend to the analogous integral equations in the
Helmholtz case~\cite{costabel_1988, jerison_1981}.

Classically, representations for solutions to the Helmholtz
equation can be obtained in the exterior of a wedge or corner by using
fractional Bessel function expansions, as in~\cite{keller_1951}. An
expansion of this type, however, does not immediately yield similar
statements concerning the density, $\sigma$.
Very recently, however, expansions of the actual density (at least in
the Laplace case) were
derived that allow for the construction of very efficient, most likely
optimal, solvers~\cite{serkh_2016}.
The topic has been further studied by many in the finite element,
asymptotics, and analysis communities including, but certainly not
limited to, Buffa, Ciarlet, Costabel, Dauge, and others~\cite{dauge_1988, 
costabel_2000, buffa_2002}. This classical work  addresses solutions to the
Helmholtz equation and Maxwell's equations, as well as Stokes flow in
fluid dynamics and elasticity.

\subsection{Numerical methods for Lipschitz domains}

While the results of the previous section are  interesting from
a mathematical standpoint, and certainly offer insights on how to properly
construct finite element methods that have desirable properties in singular
geometries, they offer little help in the construction of numerical quadrature
schemes that can be used efficiently in the Nystr\"om method solution for the
associated boundary integral equation.  Recently there have been several papers
addressing the question of constructing (mostly brute force)
discretization schemes for boundary integral equations on polyhedral domains or
domains with corners.  As mentioned before, these schemes are often a
combination of adaptive refinement of the geometry near the singular set, the
design of specialized quadratures, and proper re-weighting of the unknown
density.

Adaptive or dyadic refinement of the geometries and density near
geometric singularities has been commonplace for some
time, but it was only recently detailed 
how to embed the Nystr\"om discretization into
the proper continuous function space in order for the spectrum of the
finite-dimensional approximation to converge to the spectrum of the
continuous integral equation~\cite{bremer_2012}. We omit a discussion
of the dyadic refinement methods since they are well-known
and~\cite{helsing} offers a nice review. However, we  briefly
mention the $\cL_2$ norm-preserving scheme discussed by Bremer.

First, it should be pointed out that the unknowns in the discrete
system~\eqref{eq-discrete} are {\em point values} of the continuous density
$\sigma$. Much of the theory developed for integral equations makes use of the
$\mathcal L_2$ properties of the data and solution, but this is at odds with
the system~\eqref{eq-discrete}. As a higly non-uniform mesh is refined, the
$\ell_2$-norm of the vector $\boldsymbol \sigma = (\sigma_1 \cdots \sigma_n)^t$
becomes increasingly incomparable to the $\mathcal L_2$ norm of the solution to the
continuous integral equation~\eqref{eq-cont}. For a set of quadrature weights
$\{ h_j\}$ which accurately integrate $\sigma$ and $\sigma^2$, the proper
discrete unknown should therefore be $\tilde\sigma_j = \sqrt{h_j} \, \sigma_j$
so that
\begin{equation}
  \begin{aligned}
    \lVert \tilde{\boldsymbol \sigma} \rVert_{\ell_2} &= 
      \sum_j \tilde\sigma_j^2 \\
      &= \sum _j \left( \sqrt{h_j} \, \sigma_j
    \right)^2\\
    &= \sum_j h_j \, \sigma_j^2 \\
    &\approx \int_\Gamma \sigma^2(\bx) \, ds(\bx) 
    = \lVert \sigma \rVert_{\mathcal L_2}.
  \end{aligned}
\end{equation}
Intuitively, this embedding properly scales the unknown $\sigma_j$
according to the clustering of the discretization along $\Gamma$.
This re-weighting enables us to replace the discrete system
in~\eqref{eq-discrete} with:
\begin{equation}
\frac{1}{2} \sqrt{h_j} \, \sigma_j + 
\sum_\ell \frac{\sqrt{h_j} \, w_{j\ell}}{\sqrt{h_\ell}} \, 
\frac{\partial}{\partial
n_{x_j}} g_k(\bx_j, \bx_\ell) \, \sqrt{h_\ell} \, \sigma_\ell
= -\sqrt{h_j} \, u^{inc}(\bx_j),
\end{equation}
and declare $\tilde{\boldsymbol \sigma}$ to be the new unknown.
There is no reason to assume that the $h_j$'s and the $w_{j\ell}$'s
are the same, however in practice they are very similar except near the
singularity of $g_k$. 

Under this re-weighting, the condition number of the discrete system
converges to the condition number of the continuous problem as the
mesh size tends to zero. If the curve $\Gamma$ has corners, then,
under refinement, the condition number of the original
system~\eqref{eq-discrete} will usually diverge. For a thorough
discussion and many results concerning this idea,
see~\cite{bremer_2012}. This norm-preserving embedding is one of the
main tools used to construct high-order accurate boundary integral
equation codes in complicated and singular geometries for both the
Dirichlet and Neumann problems.  Similar ideas
with regard to $\mathcal L_1$-embedding have recently been used for
divergence-form differential equations with high-contrast background
media~\cite{askham}.  Often these re-weighting techniques alleviate
the need for designing specialized {\em corner quadratures} that are
able to integrate singular Green's functions multiplied by singular
densities which diverge in the corner~\cite{bremer_2010c, bremer_2010,
  bremer_2010b, kolm}.

It is with the previous section in mind that we begin to investigate
the relationship between the solution of a scattering problem on a
polygonal domain with that of a {\em nearby} smooth domain.  In the
next section we describe a simple convolution-based method to smooth
polygons, and then report on the relationship between the numerical
solutions to scattering problems in the smoothed and singular
geometries.

\section{Smoothing polygons in 2-dimensions} \label{sect2d}

An obvious approach to smoothing polygons is to locally represent the
polygon as a graph and convolve with a smooth, compactly-supported
even function with some specified order of differentiability. However
obvious, this technique seems not to have been analyzed or reported in
the literature. We use \emph{smooth} to mean that the function is
band-limited to some specified order. We  restrict our attention
to closed domains in two dimensions because of the emphasis on
applications to scattering problems. Scattering from open surfaces
requires several other numerical and analytical
tools~\cite{bruno_2012b, jiang_2003, jiang_2004, lintner_2015}.  
Convolutional
smoothing is an effective method in two dimensions due to the
following elementary lemma:
\begin{lemma}
  Let $\varphi(x)$ be an even, integrable function, with compact
  support and total integral $1.$ For any $a,b\in\bbR$ we have
  \begin{equation}
    \int\limits_{-\infty}^{\infty}\varphi(y) \left( a(x-y)+b
    \right)
    \, dy = ax + b.
  \end{equation}
\end{lemma}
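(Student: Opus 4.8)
The plan is to expand the integrand by linearity of the integral and then dispose of the two resulting terms separately, using the two hypotheses on $\varphi$ (unit mass and evenness) one at a time.

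First I would write
\begin{equation*}
  \int_{-\infty}^{\infty}\varphi(y)\bigl(a(x-y)+b\bigr)\,dy
  = (ax+b)\int_{-\infty}^{\infty}\varphi(y)\,dy
    - a\int_{-\infty}^{\infty} y\,\varphi(y)\,dy,
\end{equation*}
which is legitimate because $\varphi$ is integrable with compact support, so each of the integrals $\int\varphi(y)\,dy$ and $\int y\,\varphi(y)\,dy$ converges absolutely (on the compact support of $\varphi$ the factor $y$ is bounded). The first term is then $(ax+b)\cdot 1 = ax+b$ by the normalization $\int\varphi = 1$.

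It remains to check that the second term vanishes, i.e. that $\int_{-\infty}^{\infty} y\,\varphi(y)\,dy = 0$. Here I would invoke that $\varphi$ is even, so $y\mapsto y\,\varphi(y)$ is odd and integrable; hence its integral over $\mathbb{R}$ is zero (split at $0$ and substitute $y\mapsto -y$ in one half). Combining, the left-hand side equals $ax+b$, as claimed.

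There is essentially no obstacle here — the statement is elementary and the only point requiring a word of care is that the integrability plus compact support of $\varphi$ guarantees that the split into two integrals is valid and that $y\,\varphi(y)$ is itself integrable, so that the oddness argument applies. I would state the lemma this way precisely so that it can be applied locally, graph by graph, when a polygonal boundary is written as the graph of a piecewise-affine function near a corner and convolved with $\varphi$: away from the corner the boundary is affine, and the lemma says convolution leaves an affine piece unchanged, so the smoothing is genuinely local.
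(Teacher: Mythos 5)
Your proof is correct and takes essentially the same route as the paper: the paper simply notes that $\int\varphi(y)\,y\,dy=0$ and leaves the rest implicit, while you spell out the linearity step, the absolute convergence, and the oddness argument. Same idea, just more detail.
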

\begin{proof} This follows immediately from the observation that
  \begin{equation}
    \int\limits_{-\infty}^{\infty}\varphi(y) \, y \, dy=0.
  \end{equation}
\end{proof}

More importantly, 
this theorem remains true in $n$ dimensions. If $\varphi$ is a now a
even function of $n$ variables, with total integral $1$, then a simple
application of Fubini's theorem shows that convolving $\varphi$ with a
linear function simply reproduces that function.

In what follows, let a polygon $\mathcal P \in \bbR^2$ be described by
an ordered set of $n+1$ vertices $\{ v_j \}$ and $n$ edges $\{ e_j \}$
such that $v_1 = v_{n+1}$.  Each edge $e_j$ is defined by the set $\{
v_j, \, v_{j+1}\}$.  In a sufficiently small neighborhood of a
particular vertex $v$, the polygon can be represented as an
\emph{even} graph of some function $f_v$ over a support line at
$v$. We can normalize coordinates so that $x=0$ corresponds to the
vertex, with $f_v(0)=0$.  Then, for some $\delta>0$, the function
$f_v$ is linear on intervals $[-\delta,0]$ and $[0,\delta]$. See
Figure~\ref{fig_fv} for a plot of this configuration.  Suppose that
our convolution kernel $\varphi$ is supported on $[-1,1]$, then for
some $0<h<\delta/2$ let
\begin{equation}
  \varphi_h(x)=\frac{1}{h}\varphi\left(\frac xh\right),
\end{equation}
and set
\begin{equation}
  f_v^h(x)=\int\limits_{-h}^{h} \varphi_h(y) \, f_v(x-y) \, dy.
\end{equation}
From the lemma, it is clear that
\begin{equation}
  f_v^h(x)=f_v(x) \quad \text{ if } |x|\geq h.
\end{equation}
Hence the graph of $f_v^h$ defines a smooth (with band-limit dependent
on that of $\varphi$) curve that agrees with the graph of $f_v$
outside an neighborhood of the vertex of size $h$. See
Figure~\ref{fig_fv2} for a depiction.

\begin{figure}[t]
    \centering
    \begin{subfigure}[t]{.4\linewidth}
        \centering
        \includegraphics[width=.95\linewidth]{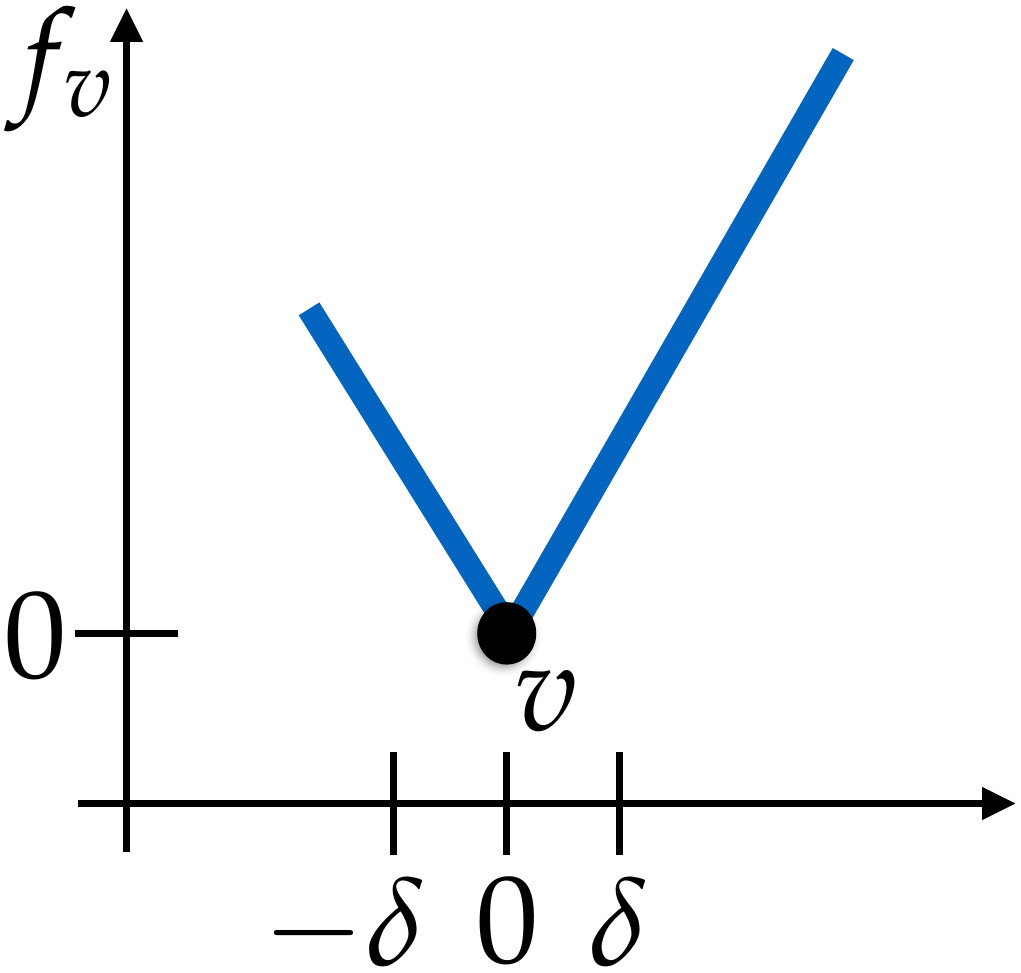}
        \caption{The rotated and translated graph of the neighborhood of a
          vertex of a polygon.}\label{fig_fv}
    \end{subfigure}
    \qquad
    \begin{subfigure}[t]{.4\linewidth}
        \centering
        \includegraphics[width=.95\linewidth]{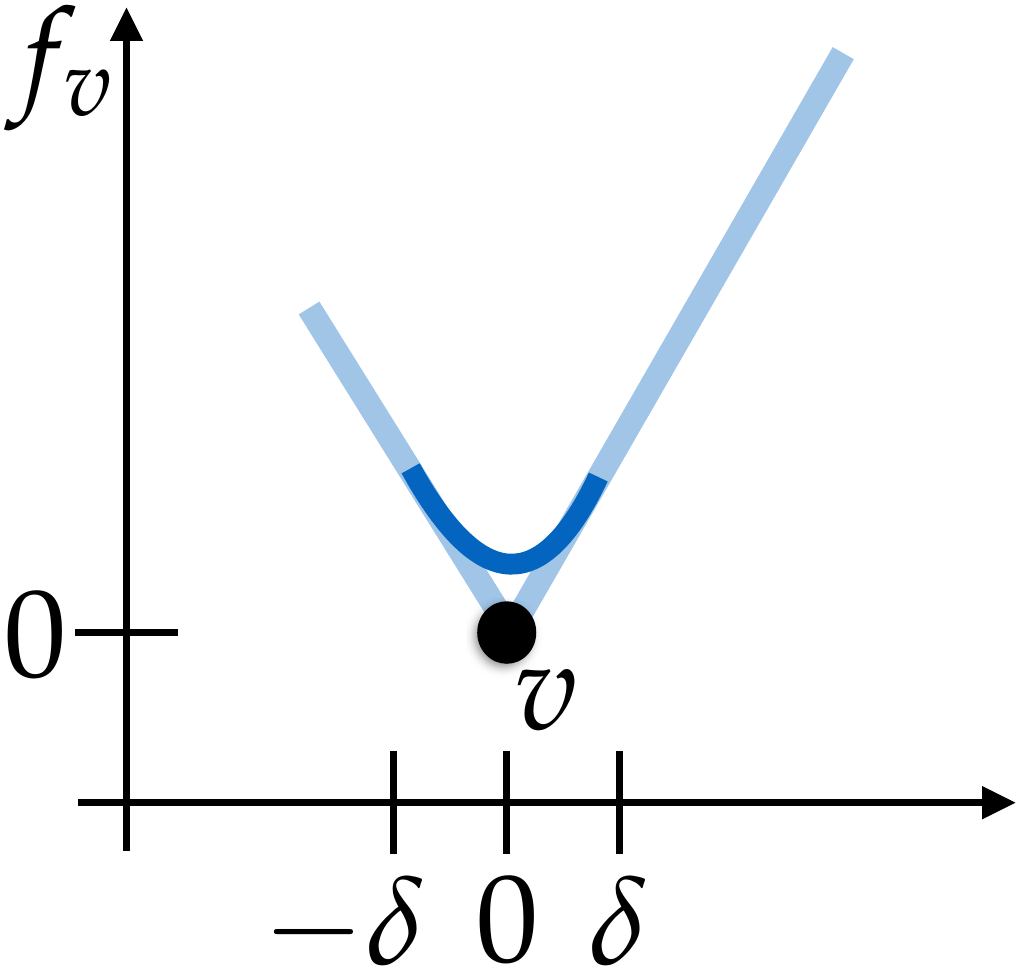}
        \caption{The smoothed vertex of the graph $f_v$.}\label{fig_fv2}        
    \end{subfigure}
    \caption{The basic configuration for smoothing around a vertex.}
\end{figure}

This gives an effective means to smooth the vertices of the polygon
$\mathcal P$; since only a neighborhood of each vertex is changed,
they can be smoothed locally and then glued together along the remaining
straight edges. If the interior angle at a vertex is less than $\pi,$
then the smoothed vertex lies inside of the original polygon, whereas
if it is larger than $\pi,$ then the smoothed vertex lies in the
exterior.

The following simple algorithm can be used to uniformly smooth the
polygon $\mathcal P$ with a given smooth, even function $\varphi$, with
support in $[-1,1]$.

\begin{quote}
\begin{center}
{\bf Algorithm for polygonal smoothing via convolution}
\line(1,0){350}
\end{center}
\begin{enumerate}
\item[{\bf Step 0:}] Choose a smoothing parameter $h>0$, smaller \\ than
$\frac{1}{2}\min\{|v_j-v_{j+1}|:\: j=1,\dots,n\}$.
\item[{\bf Step 1:}] For each $j$, represent a neighborhood of the
  vertex $v_j$ as the graph of an even piecewise linear function $f_j$
  over a support line to $\cP$ at $v_j.$
\item[{\bf Step 2:}] Convolve the functions $f_j$ with $\varphi_h,$ to
  obtain $f_j^h$.
\item[{\bf Step 3:}] Replace a neighborhood of $v_j$ with part of the
  graph of $f_j^h$ by gluing along the linear parts of the graph of
  $f_j^h$, which agree with the graph of $f_j$.
\end{enumerate}
\begin{center}\line(1,0){350}\end{center}
\end{quote}
{\em Remark}. The reason to use an \emph{even} linear function in Step
1 is to insure that the smoothed polygon has the same discrete
symmetries as $\cP.$

The convolution can be done efficiently via either closed-form
analytic expressions (depending on the choice of kernel $\varphi$) or
by high-order numerical integration using an adaptive discretization
scheme of the polygon and kernel as discussed in more detail in
Section~\ref{sec_scattering}.  Furthermore, an adaptive smoothing
algorithm can be constructed by which the width parameter $h$ is
allowed to depend on the pairwise vertex spacing $|v_j - v_{j+1}|$.

\begin{figure}[t]
\centering
\includegraphics[width=.55\linewidth]{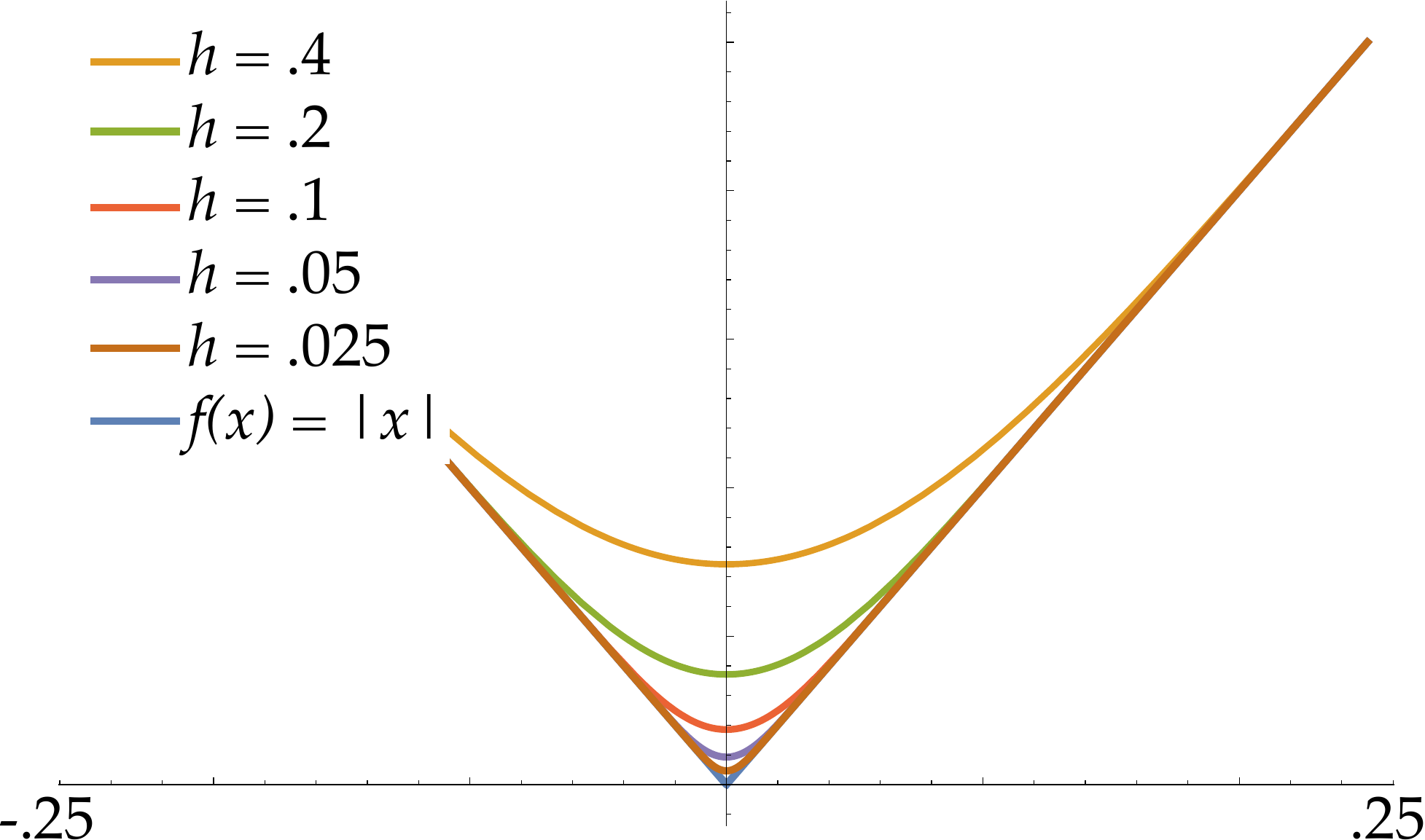}
\caption{A range of smoothings of a $\pi/2$ corner done by convolving
  a local representation with $\psi_k^h,$ with $k=8$ and $h=0.025$,
  $0.05$, $0.1$, $0.2$, $0.4$.}\label{fig20}
\end{figure}

\subsection{Selection of smoothing kernels}
To make this an effective method requires the choice of a good family
of smoothing kernels. We briefly discuss the details concerning 
two such kernels, one compactly supported and the other 
\emph{numerically} compactly supported. Let us first examine the family
of functions $\psi_k(x) \in\cC^{k-1}(\bbR)$,
\begin{equation}\label{eq_kerncompact}
  \psi_k(x)=c_k \, (1-x^2)^k \, \chi_{[-1,1]}(x),
\end{equation}
where $\chi_{[a,b]}$ is the indicator function on the interval
$[a,b]$.  These functions should be familiar from undergraduate
analysis, and are well-suited to convolutional  smoothing.  Here $c_k$
is chosen so that $\psi_k$ has total integral $1$. 
In fact,
\begin{equation}
  \psi_k(x)= \Gamma\left( k+\frac{3}{2} \right) 
\frac{ (1-x^2)^k }{\sqrt{\pi} \, \Gamma(k+1)} 
 \, \chi_{[-1,1]}(x).
\end{equation}
 An example of
smoothing a right-angled vertex using this kernel is shown in
Figure~\ref{fig20}.

\begin{figure}[t]
  \centering
  \begin{subfigure}[t]{.45\linewidth}
    \centering
    \includegraphics[width=.95\linewidth]{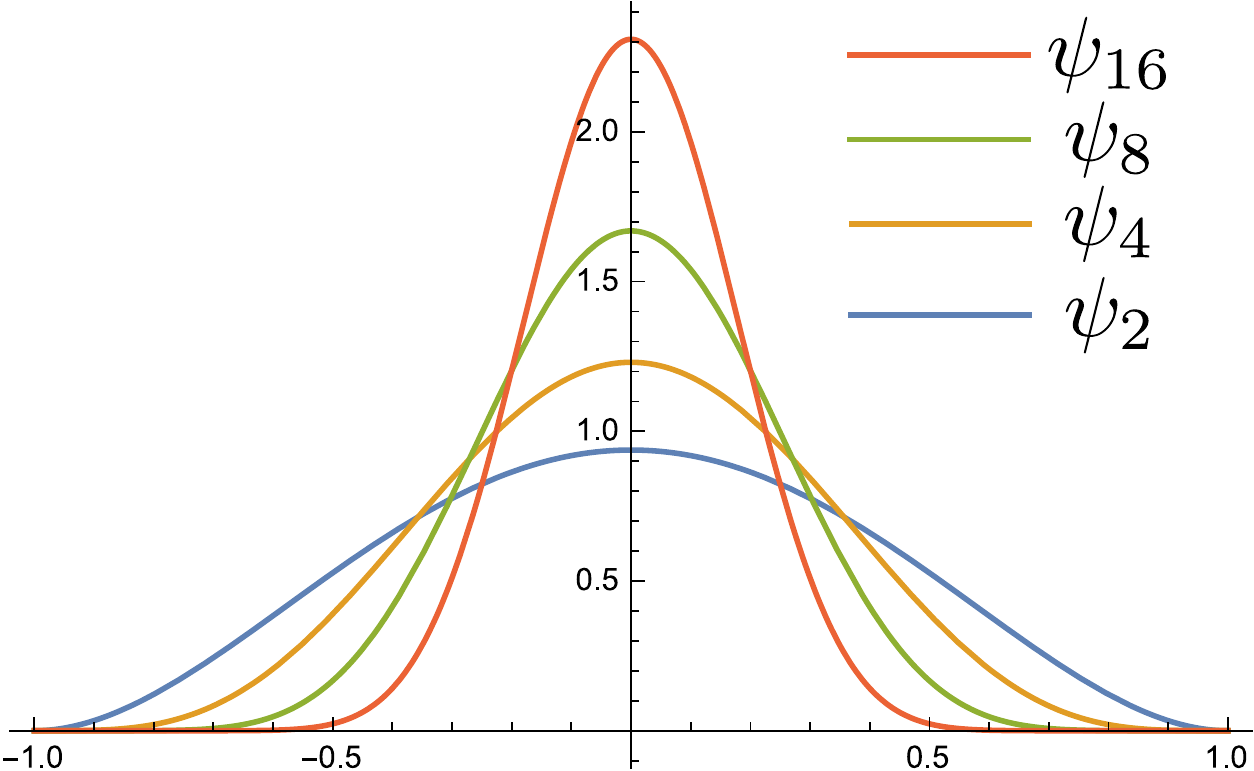}
    \caption{Plots of $\psi_k$ for $k=2$, $4$, $8$, $16$.}
  \end{subfigure}
  \quad
  \begin{subfigure}[t]{.45\linewidth}
    \centering
    \includegraphics[width=.95\linewidth]{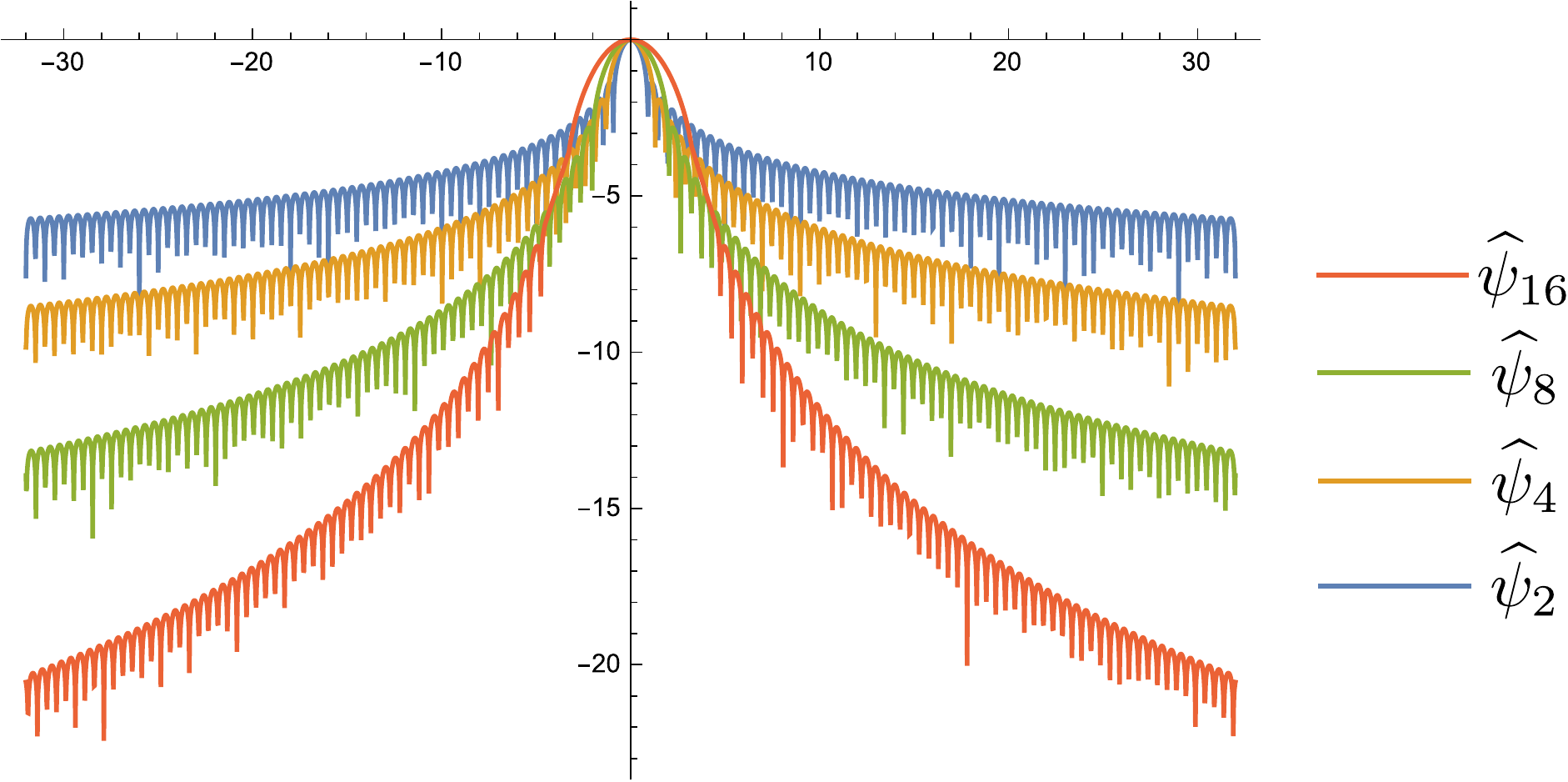}
    \caption{$\log_{10}$ of the absolute value of the Fourier
      transform of the kernels $\psi_k$.}
  \end{subfigure}
  \caption{Examples of the convolution kernels $\psi_k$ and their
  log-power spectra.}\label{Fig01}
\end{figure}

When choosing a kernel with which to perform this convolutional
smoothing, it is important to choose one which is localized in both
physical space \emph{and} Fourier space. Post-convolution, the
resulting smooth curve will then have a band-limit proportional to the
product of the band-limits of the straight edges and the kernel. The
lower the resulting band-limit, the more accurately the curve can be
discretized with a fixed number of degrees of freedom (discretization
points).  The Fourier transform of the function $\psi_k$ is given
analytically as
\begin{equation}
\begin{split}
\mathcal F\left[ \psi_k \right](\xi) &= \hpsi_k(\xi) \\
&=\Gamma\left(k+\frac 32\right)\left(\frac{1}{\pi\xi}\right)^{k+\frac 12}
J_{k+\frac 12}(2\pi\xi),
\end{split}
\end{equation}
where $J_n$ is the Bessel function of the first kind of order $n$
and we have chosen the convention
\begin{equation}
\mathcal F\left[ f \right](\xi) = 
\int_{-\infty}^\infty f(x) \, e^{-2\pi i \xi x} \, dx.
\end{equation}
It is clear that $|\hpsi_k(\xi)|\leq \hpsi_k(0)=1$, and asymptotically
for large $\xi$ these behave like:
\begin{equation}
  |\hpsi_k(\xi)|\approx
  \frac{e\sqrt{\pi}}{k}\left(\frac{2k}{e|\xi|}\right)^{k+1}.
\end{equation}
This shows that once $|\xi| > 2k/e$, the Fourier transform of $\psi_k$
decays quite
rapidly. The Fourier transform of the scaled function satisfies
\begin{equation}
  \mathcal F \left[ \frac{1}{h} \psi_{k} \left( \frac{x}{h}\right)
    \right](\xi) = \mathcal F \left[ \psi_{k,h} (x) \right](\xi) =
  \hpsi_k(h\xi),
\end{equation}
from which it follows that using frequencies a bit larger than
$\mathcal O (2k/eh)$ should suffice. Graphs of the Fourier transforms
of $\{\psi_4, \, \psi_8, \, \psi_{12}, \, \psi_{16}\}$ are shown in
Figure~\ref{Fig01}. Figure~\ref{fig-regpolys} shows multiple smoothings
of regular polygons convolved with the kernel $\psi^h_{k}$ for
various values of $h$.
Note that the smoothings are
nested inside one another for various values of $h$, with the more
interior smoothings corresponding to larger values of $h$.

\begin{figure}[t]
  \centering
  \begin{subfigure}[t]{.45\linewidth}
    \centering
    \includegraphics[width=.95\linewidth]{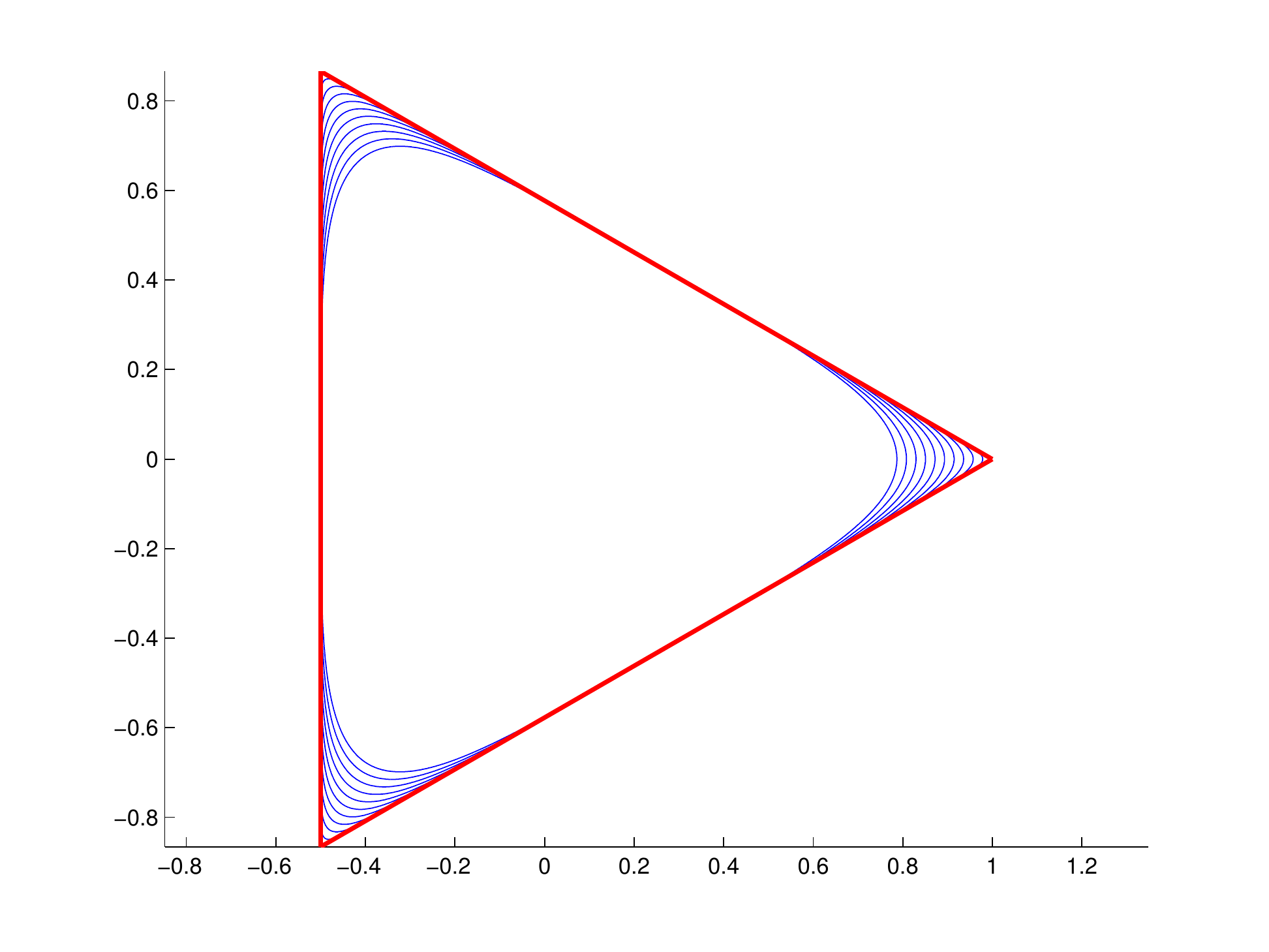}
    \caption{Several smoothings of a triangle.}
  \end{subfigure}
  \quad
  \begin{subfigure}[t]{.45\linewidth}
    \centering
    \includegraphics[width=.95\linewidth]{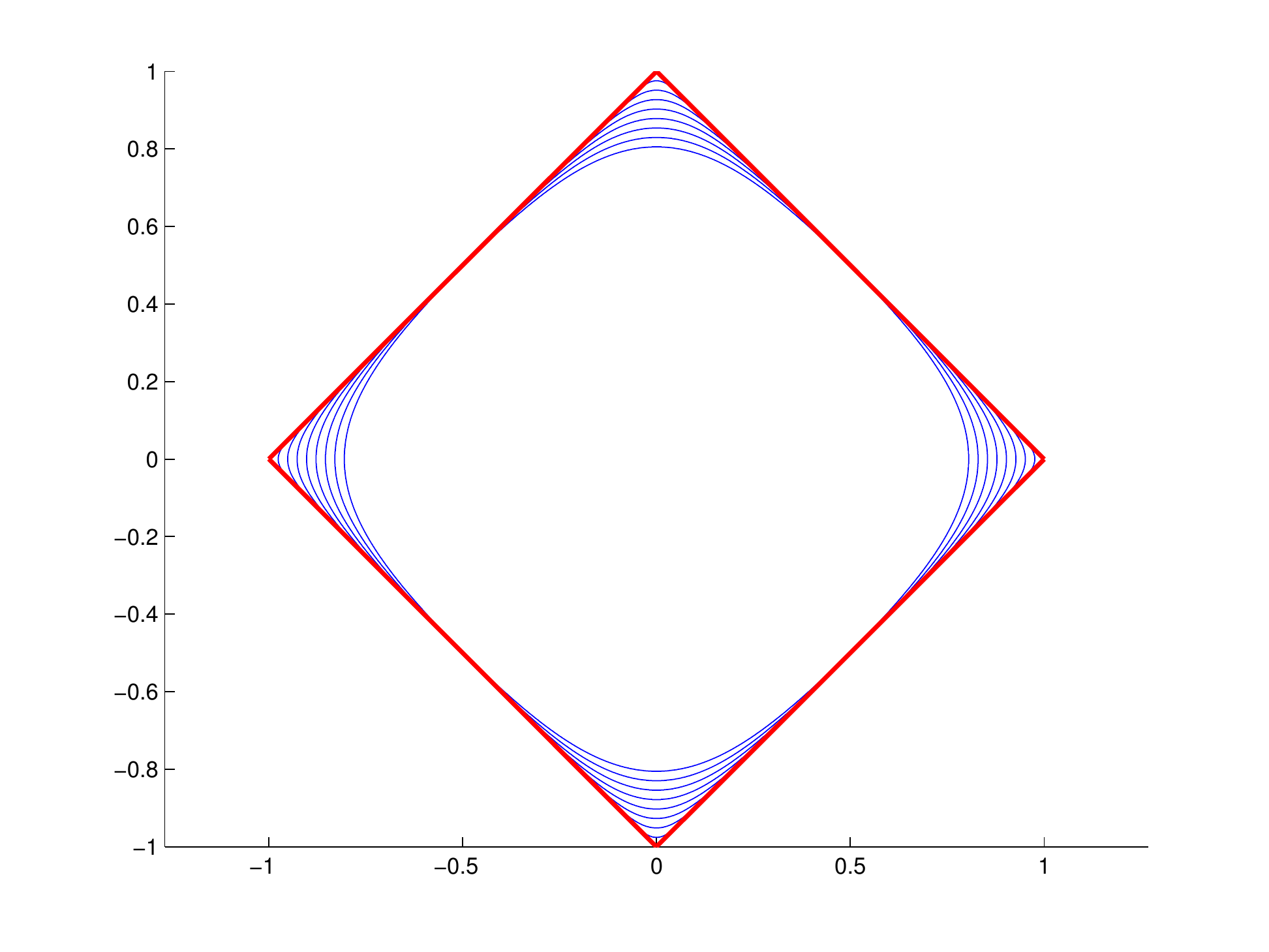}
    \caption{Several smoothings of a square.}
  \end{subfigure} \\
  \begin{subfigure}[t]{.45\linewidth}
    \centering
    \includegraphics[width=.95\linewidth]{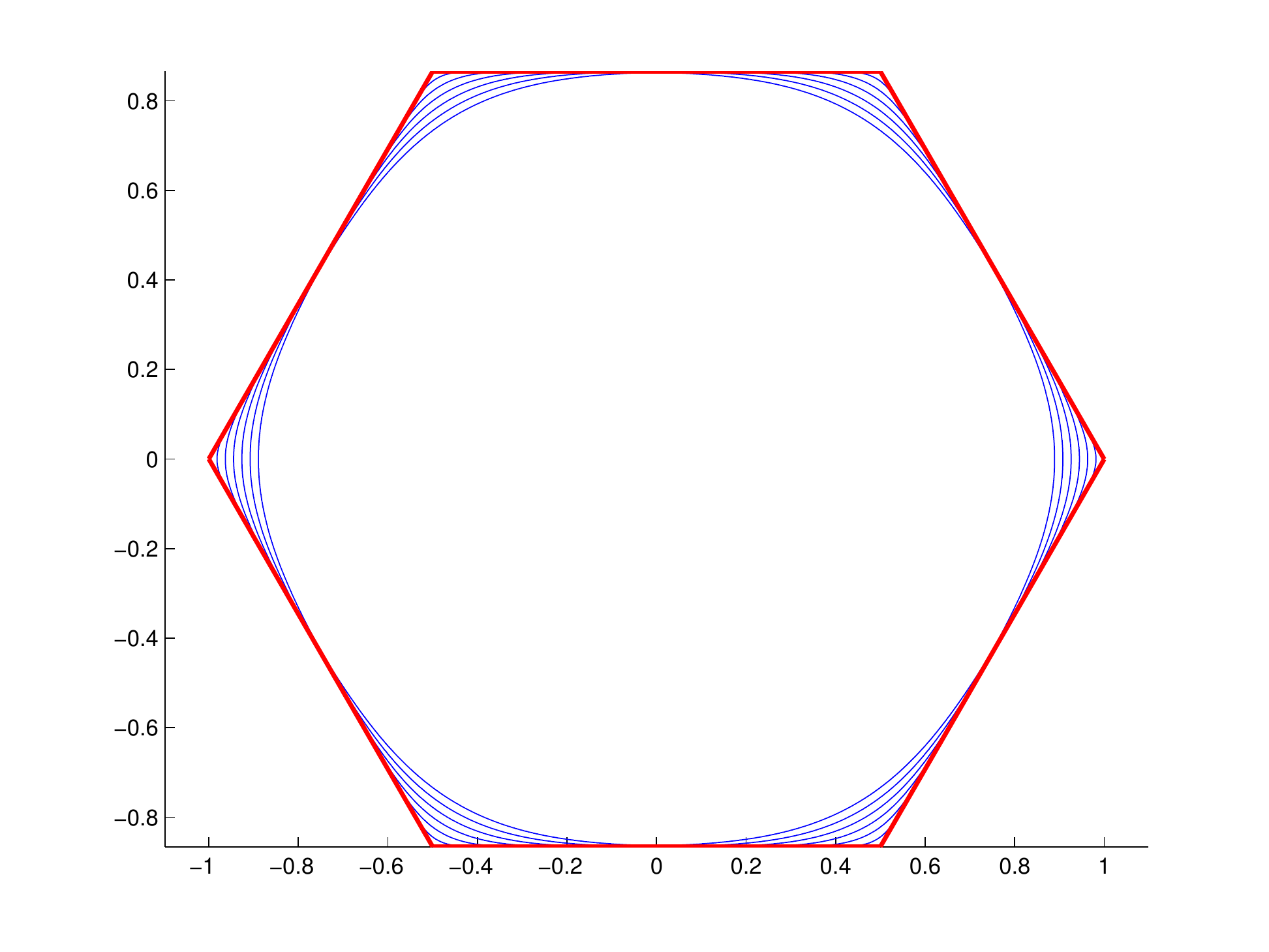}
    \caption{Several smoothings of a hexagon.}
  \end{subfigure}
  \quad
  \begin{subfigure}[t]{.45\linewidth}
    \centering
    \includegraphics[width=.95\linewidth]{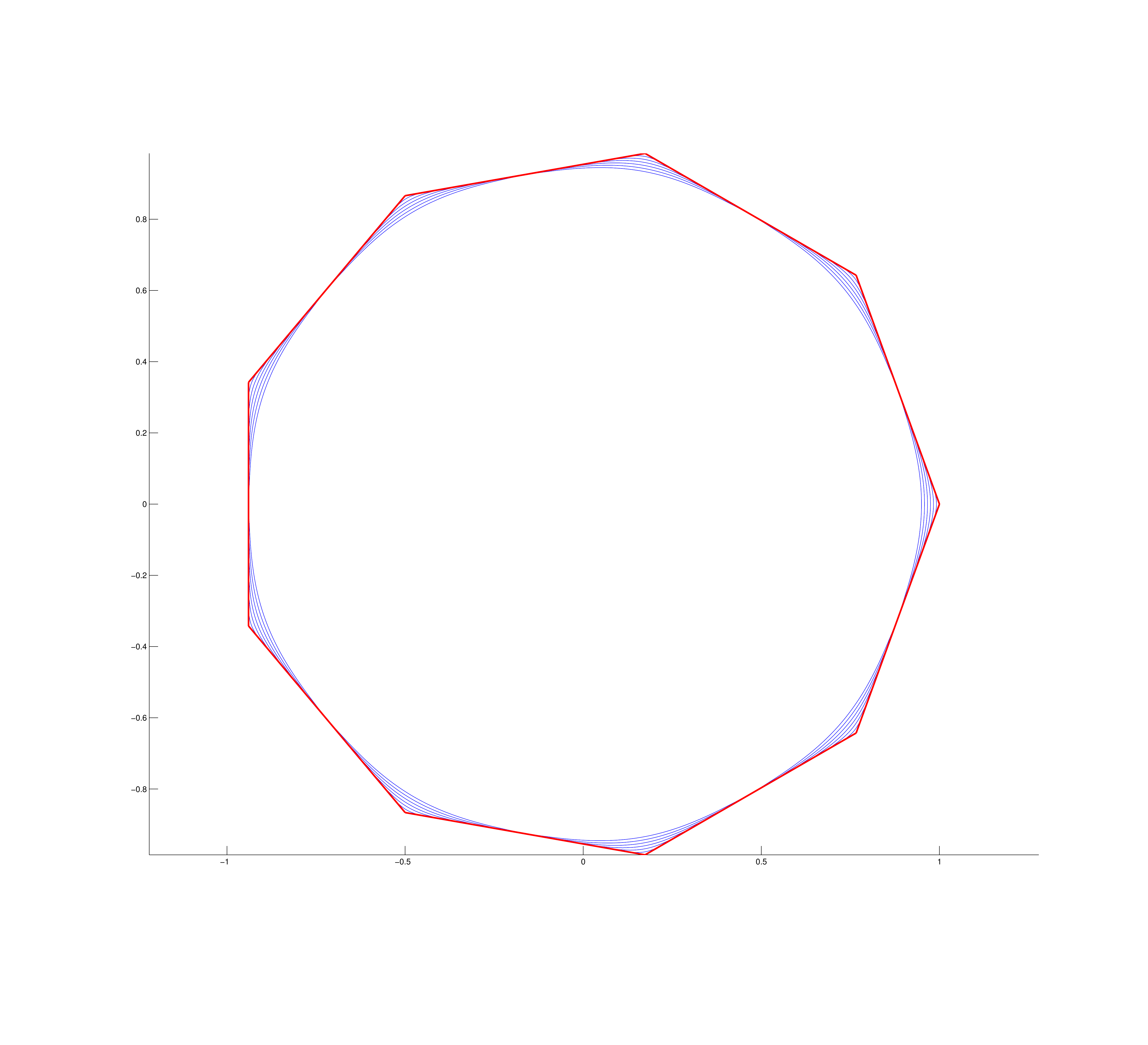}
    \caption{Several smoothings of a nonagon.}
  \end{subfigure}
  \caption{Convolutional smoothings of regular polygons.}\label{fig-regpolys}
\end{figure}

The kernel $\psi_k$ in equation~\eqref{eq_kerncompact} is convenient
to use for our purposes because of its explicit compactness. However,
if we are concerned with the support in the Fourier domain of
$\hpsi_k$ (i.e. the band-limit of $\psi_k$, and therefore the
band-limit of the smoothed geometry), we may wish to choose a kernel
with somewhat more \emph{optimal} uncertainty properties, the
Gaussian:
\begin{equation}
\begin{split}
\phi(x) &= \frac{1}{\sqrt{2\pi}} e^{-x^2/2}, \\
\widehat\phi(\xi) &= e^{-2 \pi^2 x^2}.
\end{split}
\end{equation}

The kernel $\phi$ is \emph{not} analytically compactly supported,
however, it is \emph{numerically} compactly supported. By this we mean
that for any $\epsilon >0$ we can find a threshold $x_\epsilon > 0$
such that for any $|x| > x_\epsilon$, $\phi(x) < \epsilon$. This,
coupled with the integrability of $\phi$, allows us to choose a width
parameter $h$ such that outside of a neighborhood of a vertex, the
resulting smoothed geometry differs pointwise from a straight line
segment by at most  $\epsilon$. Furthermore, if the
neighborhood of a vertex is represented as the graph of a function
$f$, the convolution of $f$ with the Gaussian can be done
analytically. Indeed, a symmetric $f$ will be of the form $f(x) = a|x|
+ b$, for some parameters $a$, $b$, and if we denote a scaled version
of the Gaussian by $\phi_h$, then
\begin{equation}\label{eq_gconv}
\left[ \phi_h * f \right](x) = ax \, \text{erf} \left(
\frac{x}{\sqrt{2}h}\right) + b + \sqrt{\frac{2}{\pi}} \, a h
\, e^{-x^2/2h^2},
\end{equation}
where $\text{erf}$ is the error function. Clearly, for any $\epsilon >
0$, there is a sufficiently large $x_\epsilon$ such that $|\phi_h*f -
f| < \epsilon$ for all $|x| > x_\epsilon$.  In the following numerical
experiments, we set $\epsilon \approx 10^{-15}$ such that smoothing
calculations are done to nearly machine precision. It should be noted
that the choice of $\epsilon$ is {\em independent} of the choice of
$h$. The value of $\epsilon$ determines the size of $|\phi_h(h)|$.

\subsection{Discretization of the smoothing}\label{sec_discretization}

We first discretize a smoothed geometry with a specified value of $h$
(depending on the particular polygon) using polynomial panels
described by $16$ Gauss-Legendre interpolation nodes (samples of
values and derivatives are obtained numerically via adaptive
discretization). Each panel is {\em resolved} when the corresponding
Legendre polynomial coefficients (and those of the arclength function)
of an oversampled discretization are below some threshold, set to
$10^{-10}$ in all cases. Obtaining higher precision is
straightforward, and merely a matter of further refinement. We are
mainly concerned with rough convergence on sub-wavelength rounded
geometries.  See Figure~\ref{fig_disc} for a picture of the
discretization using Gauss-Legendre nodes on each panel, as well as a
diagram of the smoothing kernel and corner.

\begin{figure}[t]
  \centering
  \begin{subfigure}[t]{.3\linewidth}
    \centering
    \includegraphics[width=1\linewidth]{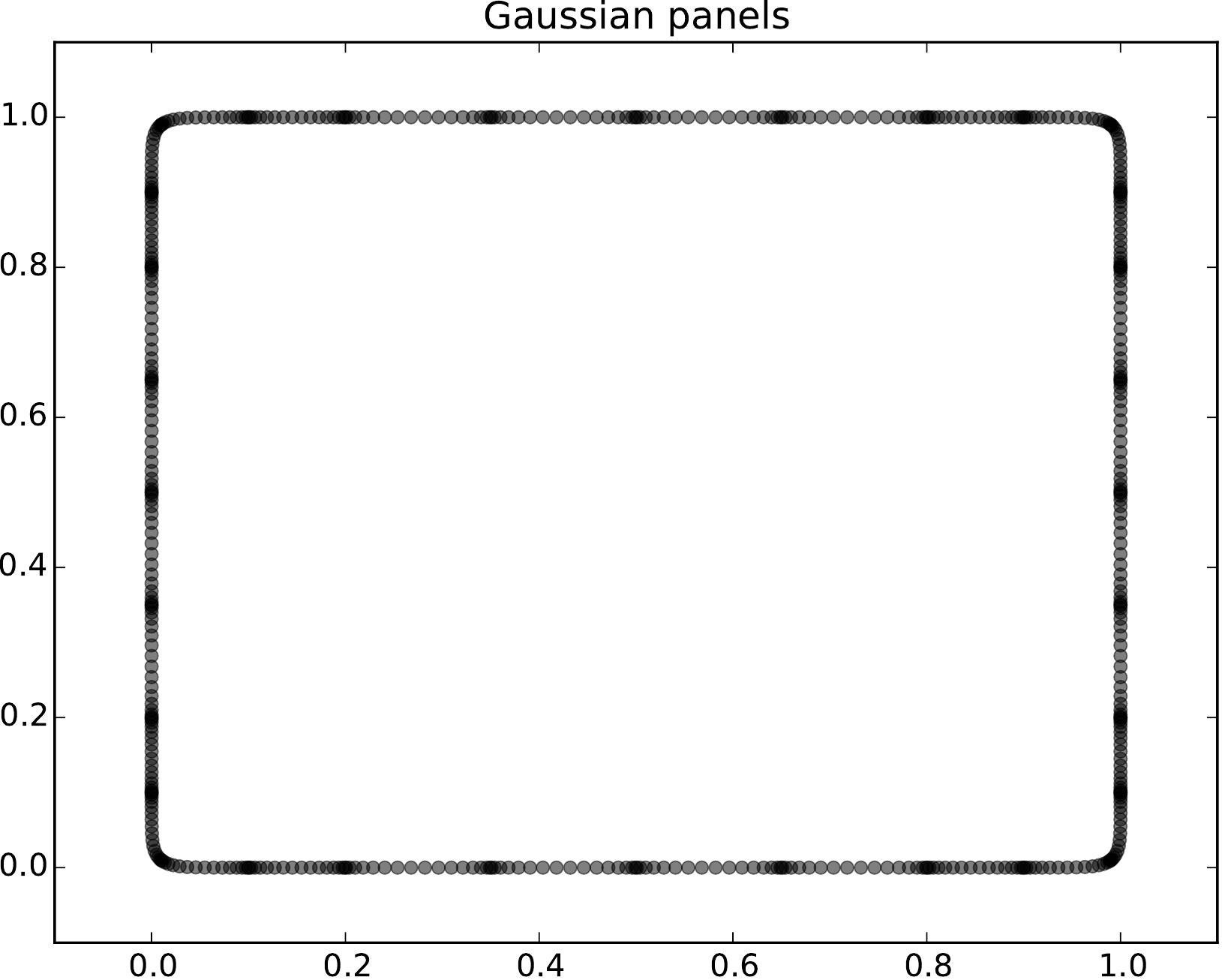}
    \caption{The total geometry.}
  \end{subfigure}
  \quad
  \begin{subfigure}[t]{.3\linewidth}
    \centering
    \includegraphics[width=1\linewidth]{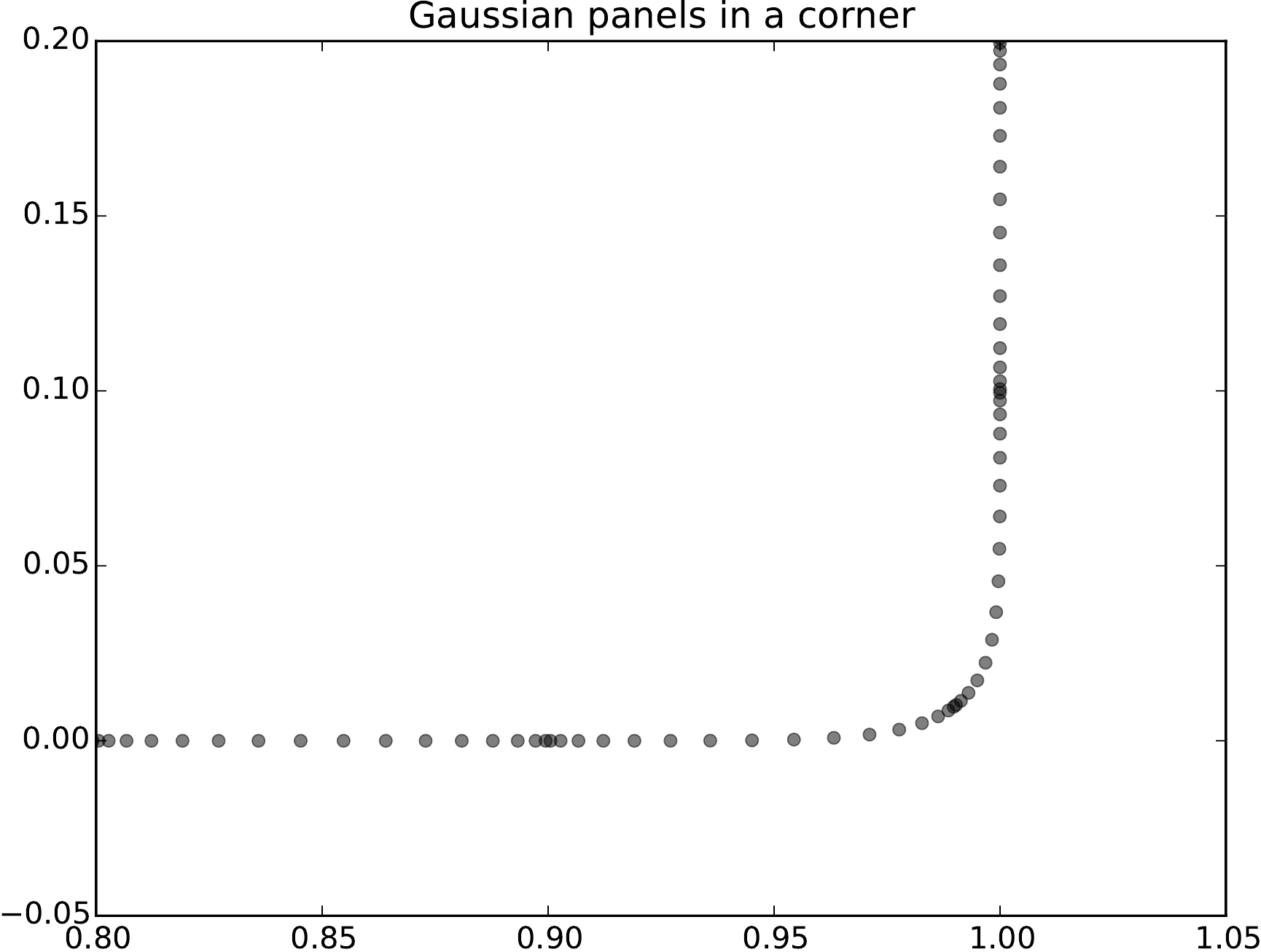}
    \caption{A corner of the geometry.}
  \end{subfigure}
  \quad
  \begin{subfigure}[t]{.3\linewidth}
    \centering
    \includegraphics[width=1\linewidth]{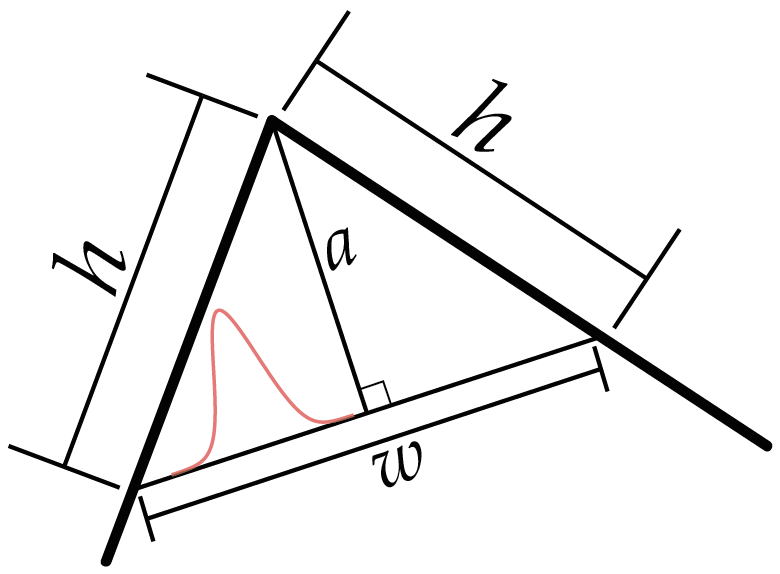}
    \caption{Kernel arrangement.}
  \end{subfigure}
  \caption{Smoothed polygon as sampled using Gauss-Legendre nodes.}
  \label{fig_disc}
\end{figure}

Outside of a distance $h$ from the corner along an edge, the boundary
contains straight edges which can be directly described using linear
polynomial parameterizations.  Inside a distance $h$ from the corner,
we insert (via translation and rotation) an adaptive panel-based
discretization of the rounded function:
\begin{equation}
  f_\delta(x) = \int_{-\infty}^{\infty} \phi_\delta(t) \,
  \left( a - \frac{a}{w/2} \vert t \vert \right) \, dt,
  \qquad \text{for } x \in (-w/2, w/2),
\end{equation}
where for $\epsilon >0$, $\delta = \delta(w)$ 
is chosen such that $f_\delta$
matches the original polygon to precision
$\epsilon$. Figure~\ref{fig_disc} depicts the lengths $a$, $w$, and
$h$.  It is the curve $f_\delta$ that is adaptively discretized so
that its value, first derivative, and arclength functions are
accurate to an absolute precision $\epsilon$ \cite{trefethen_2012}. In
all examples, $\phi_\delta$ is the Gaussian kernel, and the explicit
convolution is given in equation~\eqref{eq_gconv}.  In one final
pre-processing step of the geometry, further refinement takes place
until all neighboring panels differ in arclength by at most a factor
of two and no panel is larger than $2\lambda$, where $\lambda$ is the
wavelength inherent to the problem.  Using the resulting
discretization nodes $\{\bx_i\}$, we discretize the relevant integral
equation (as in the next section) using the $\cL_2$-weighted
Nystr\"om method.
This discretization scheme, used in conjunction with high-order
quadratures for weakly-singular kernels, ensures the convergence of
potentials for both the Dirichlet and Neumann scattering problems in
corner geometries.

We solve the linear system resulting from the Nystr\"om discretization
of the continuous integral equation directly using the LAPACK
implementation of $LU$-factorization. All numerical experiments are
implemented in Fortran 90 and run using the Intel Fortran Compiler
with MKL libraries. Entries in the discretized matrix corresponding to
source-target pairs that reside on the same panel or on neighboring
panels are determined using generalized Gaussian quadratures for
logarithmically singular kernels~\cite{bremer_2010b}. Entries corresponding
to source-target pairs that reside on non-neighboring panels are
obtained from the $16$-point Gaussian quadrature rule corresponding to
unit weight (the Legendre polynomial case).

\begin{figure}[t]
  \centering
  \begin{subfigure}[t]{.3\linewidth}
    \centering
    \includegraphics[width=1\linewidth]{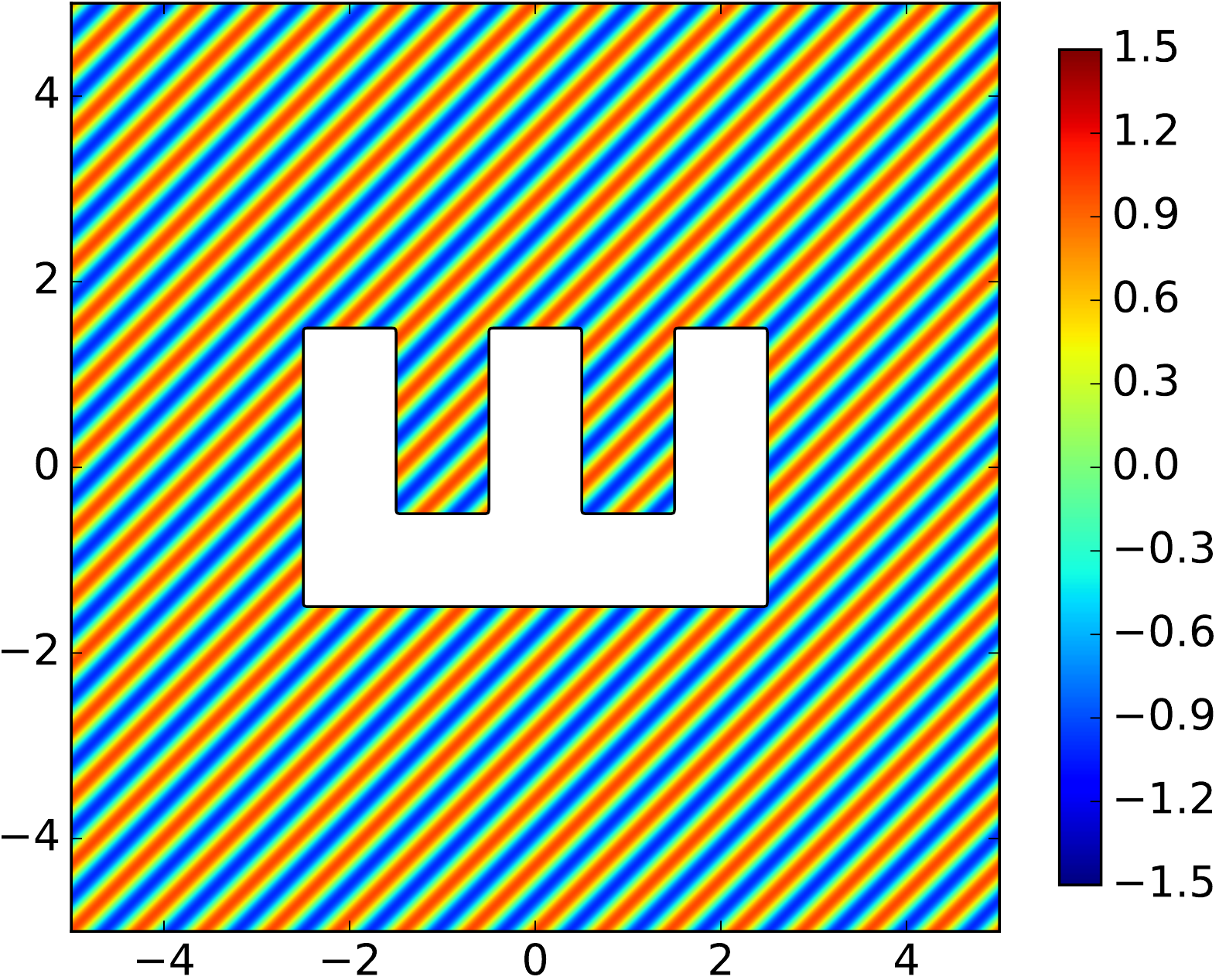}
    \caption{The incoming field.}
  \end{subfigure}
  \quad
  \begin{subfigure}[t]{.3\linewidth}
    \centering
    \includegraphics[width=1\linewidth]{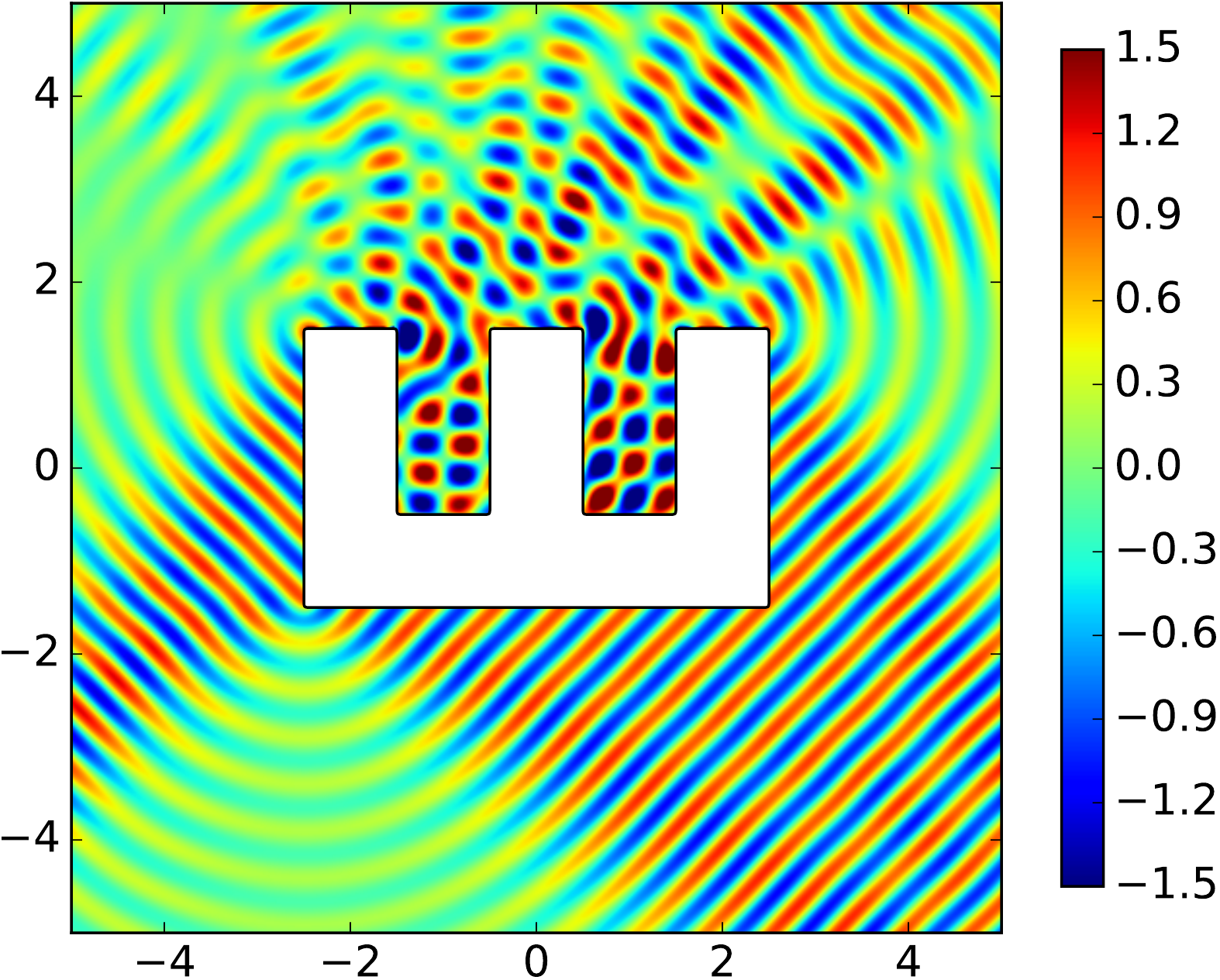}
    \caption{The scattered field.}
  \end{subfigure}
  \quad
  \begin{subfigure}[t]{.3\linewidth}
    \centering
    \includegraphics[width=1\linewidth]{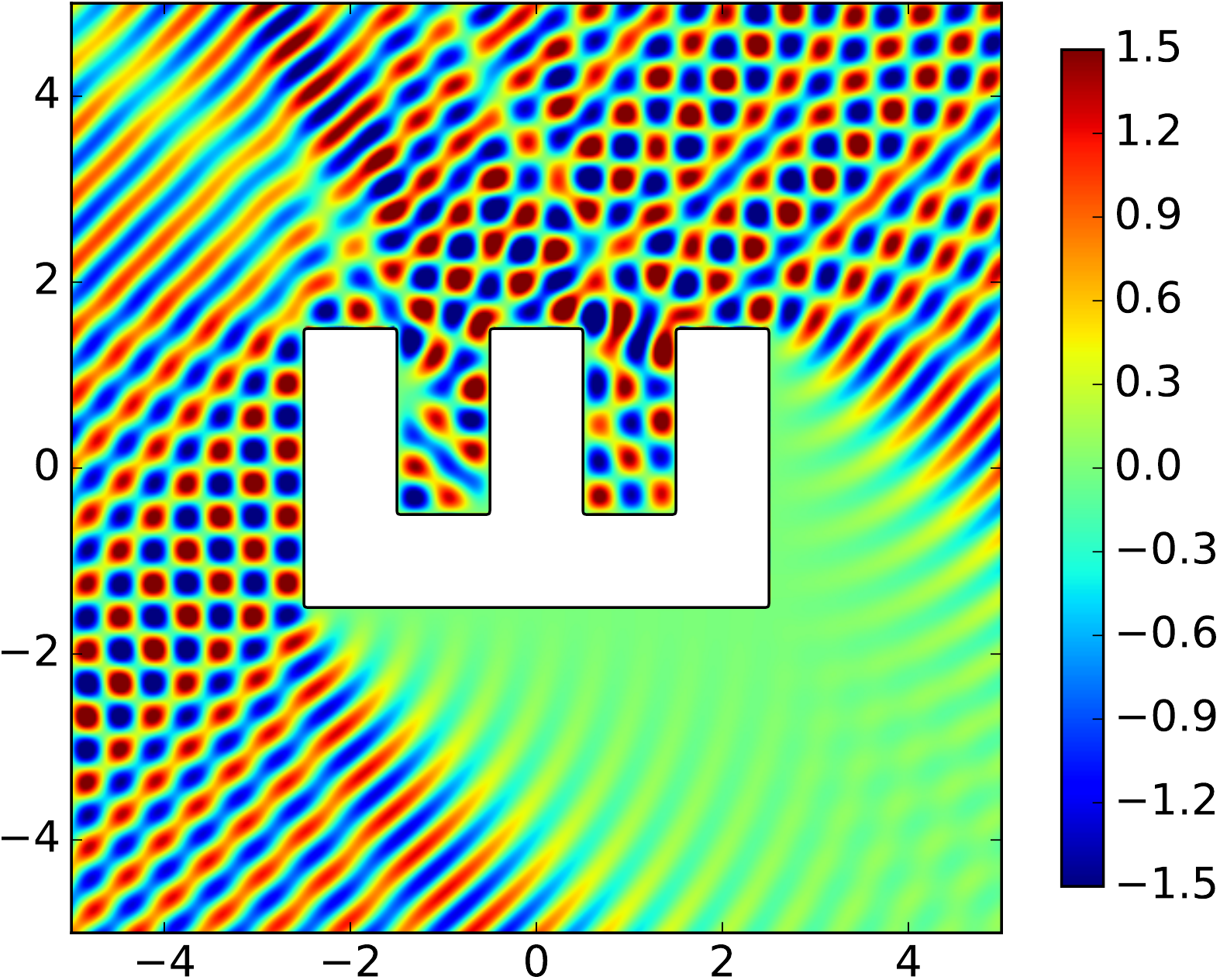}
    \caption{The total field.}
  \end{subfigure}
  \caption{Example exterior sound-soft (Dirichlet) 
    scattering problem. The real
    part of all fields is shown. The angle of incident plane wave is
    $\phi = -\pi/4$.}
  \label{fig_scattering}
\end{figure}

\subsection{Scattering from smoothed polygons: Sound-soft}
\label{sec_scattering}

We now turn our attention to numerical experiments pertaining to the
scattering of acoustic waves from smoothed polygons.  In this section,
we study exterior Helmholtz scattering problems for Dirichlet
boundary conditions; in the following section, we address the
analogous Neumann problem.  In the case of Dirichlet boundary
conditions (corresponding to the case of a sound-soft scatterer), we
have the following boundary value problem:
\begin{equation}
\begin{split}
( \Delta + k^2 ) u^{tot} &= 0  \qquad \text{in } \bbR^2\setminus\Omega,  \\
u^{tot} &= 0  \qquad \text{on } \Gamma = \partial\Omega,
\end{split}
\end{equation}
along with suitable radiation conditions at infinity.
Representing the scattered solution $u$ using a combined-field
potential \cite{epstein_2015},
\begin{equation}
  u = \left( \mathcal S_k + i \left( k\alpha +
  \beta\right)\mathcal D_k \right) \sigma,
\end{equation}
we have the following second-kind integral equation along $\Gamma$
for the density $\sigma$: 
\begin{equation}
  \frac{\sigma}{2}  + \left( \mathcal S_k + i \left( k\alpha +
  \beta\right)\mathcal D_k \right)
  \sigma = -u^{inc} \qquad \text{on } \Gamma,
\end{equation}
where $\mathcal S_k$ and $\mathcal D_k$ are interpreted in their
on-surface limiting sense.  We have set $\alpha = 1.2$ and $\beta =
0.8$ in all examples.  The scattered field is then calculated at all
exterior volume locations using standard Gaussian quadrature for
polynomials and the fast multipole method for the two-dimensional
Helmholtz equation~\cite{gg_software}. More accurate near-surface
evaluation could be obtained using the methods of \cite{helsing_2014b}
or \cite{klockner_2013, rachh_2016}.

The following simulations are obtained from driving the scattering
problem by setting $u^{inc}$ to be a two-dimensional plane-wave,
traveling in the direction of the angle $\phi$:
\begin{equation}
u_\phi^{inc}(\bx) = e^{ik(x\cos\phi + y\sin\phi)}.
\end{equation}
It is easy to see that $u^{inc}$ satisfies the free-space Helmholtz
equation, but {\em not} the Sommerfeld radiation condition.  See
Figure~\ref{fig_scattering} for depiction of an incoming plane wave
$u_{-\pi/4}^{inc}$, scattered field $u$, and total field $u^{tot}$
with Dirichlet boundary conditions. In this example, $k = 12.43 + i
10^{-5}$, corresponding to a wavelength of
$\lambda = 2\pi/\Re{k} \approx 0.505 $.
The accuracy of the integral equation solver is tested by calculating
the error in the potential when compared to a {\em known} solution
obtained from placing a fundamental source in the 
interior of the object. I.e., we solve a test problem:
\begin{equation}\label{eq_testsolution}
\begin{aligned}
( \Delta + k^2 ) u &= 0  &\qquad &\text{in } \bbR^2\setminus\Omega,  \\
u &= g_k(\cdot, \bx_0)  & &\text{on } \Gamma,
\end{aligned}
\end{equation}
where $\bx_0$ is placed near the center of the object. The potential
$u$ is then compared with the exact solution $g_k(\cdot,\bx_0)$ at
test points placed on a circle some distance away from the scatterer.

We study the effect of the corner rounding by examining what is
referred to as the {\em sonar cross section} (SCS) of the object $\Omega$.
Usually, this function is given in terms of the far-field behavior of
the scattered field based on large-$\bx$ asymptotics of $H^{(1)}_0$:
\begin{equation}
  u^{far}(\bx) = \sqrt{\frac{1}{8\pi k}}
    \frac{e^{ik\vert \bx \vert}}{\sqrt{\vert \bx\vert}} e^{i\pi/4} \int_\Gamma
  e^{-ik \hat\br \cdot \by} \, \sigma(\by) \, ds(\by),
\end{equation}
where $\hat\br = \bx/|\bx|$. The far-field signature
is often
used in inverse obstacle scattering problems where measurement noise
is frequently the dominant component anyway~\cite{colton_kress}.

However, in our case, we have direct access to the scattered field at
any observation point.  We can thereby evaluate near-field functions
at varying radii from the scatterer:
\begin{equation}
  \begin{aligned}
u^{near}_d(\theta) &= \int_\Gamma g_k(\bd, \by) \, \sigma(\by) \,
ds(\by), \\
\bd &= \bc + d \cos\theta \, \bi + d \sin\theta \, \bj,
  \end{aligned}
\end{equation}
where we denote the scattered field at a distance $d$ from the
centroid $\bc$ of $\Omega$. The vectors $\bi$, $\bj$ are the unit
vectors in the $x$, $y$ directions, respectively. There are two types
of cross sections that are usually computed: mono-static and
bi-static. Mono-static cross sections characterize the scatterer in
terms of the intensity of the backscatter in the {\em same} direction
as the incoming wave. In particular, we calculate $u^{near}_d$ at a
{\em single} value of $\theta$ corresponding to the opposite angle of
propagation of the incoming plane wave $u^{inc}_{-\theta}$. If the
mono-static cross section is sampled at $m$ angles, this requires
solving $m$ separate scattering problems.

On the other hand, the bi-static cross section contains intensities of
the scattered field for a {\em fixed} angle of incident plane wave.
 Figure~\ref{fig_cross} shows sample mono-static and bi-static
cross sections for the scattering problem depicted in
Figure~\ref{fig_scattering}, each captured at a distance of $d = 10
\approx 20\lambda$ from the origin. The angle of incidence for the
bi-static case was $\theta = -\pi/4$. In each case, the cross section
is plotted on a polar grid in decibels:
\begin{equation}
  \mathcal C(\theta) = 10 \log_{10}\left( \vert u(\theta) \vert \right).
\end{equation}

\begin{figure}[t]
  \centering
  \begin{subfigure}[t]{.4\linewidth}
    \centering
    \includegraphics[width=1\linewidth]{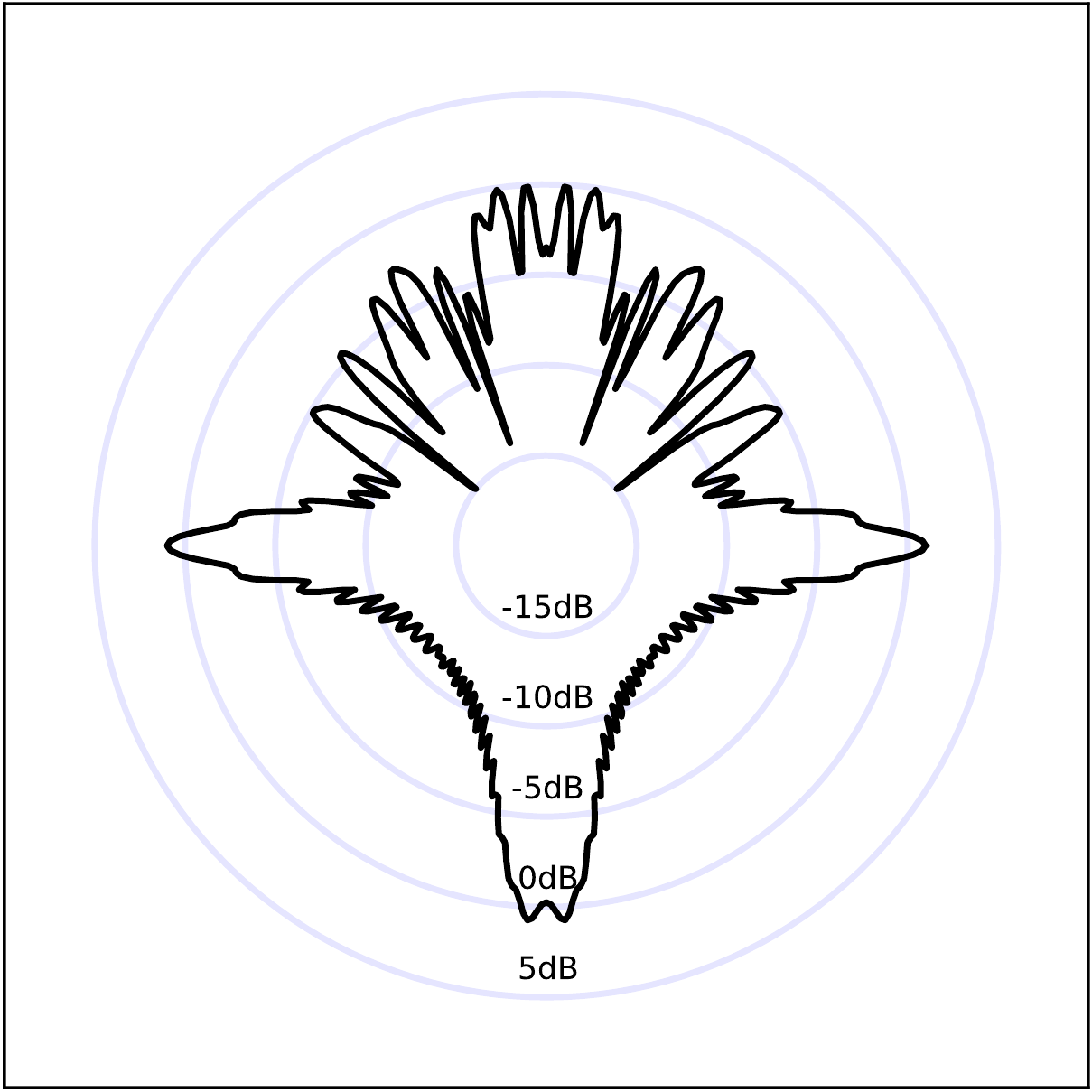}
    \caption{The mono-static cross section.}
  \end{subfigure}
  \qquad
  \begin{subfigure}[t]{.4\linewidth}
    \centering
    \includegraphics[width=1\linewidth]{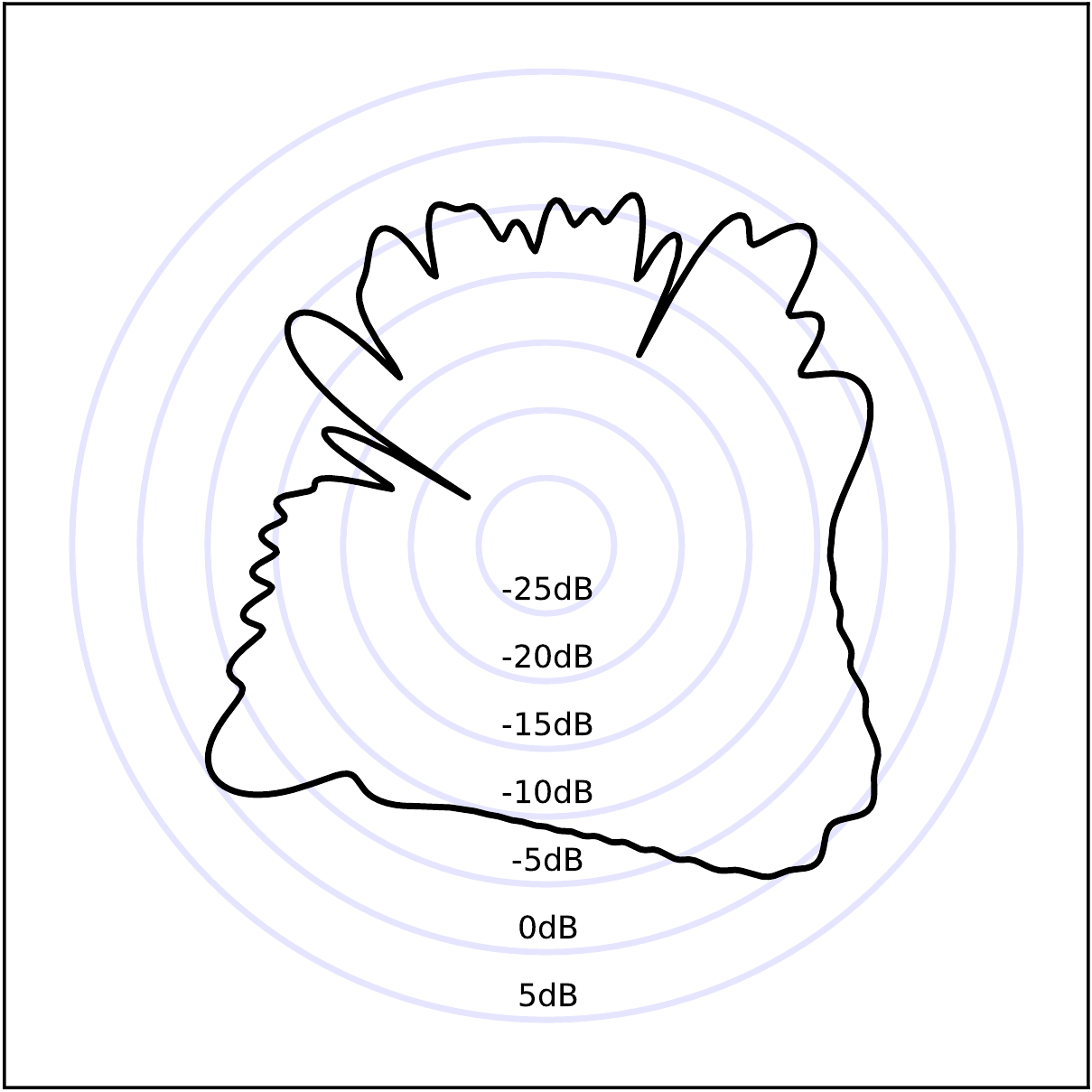}
    \caption{The bi-static cross section.}
  \end{subfigure}
  \caption{Example mono-static and bi-static cross sections for the
    Dirichlet problem 
    corresponding to the geometry in Figure~\ref{fig_scattering},
    captured at approximately $20\lambda$ from the origin.
    The absolute value of the scattered field is plotted on a
    $\log_{10}$ scale.}
  \label{fig_cross}
\end{figure}

As the size of the region that is rounded near the corners is
decreased, to below sub-wavelength, we see a convergence of the cross
sections.  Figure~\ref{fig_manyscs} shows a plot of several bi-static
and mono-static cross sections for the same object (that in
Figure~\ref{fig_scattering}). Here, we have increased the wave number
to $k = 54.32 + i 10^{-5}$ to allow for a larger dynamic range of
rounding widths. This value of $k$
corresponds to a wavelength of $\lambda \approx 0.12$.
The cross section is evaluated on a disc of radius $15
\approx 125\lambda$ centered at the origin. 

\begin{figure}[h!]
  \centering
  \begin{subfigure}[t]{.4\linewidth}
    \centering
    \includegraphics[width=1\linewidth]{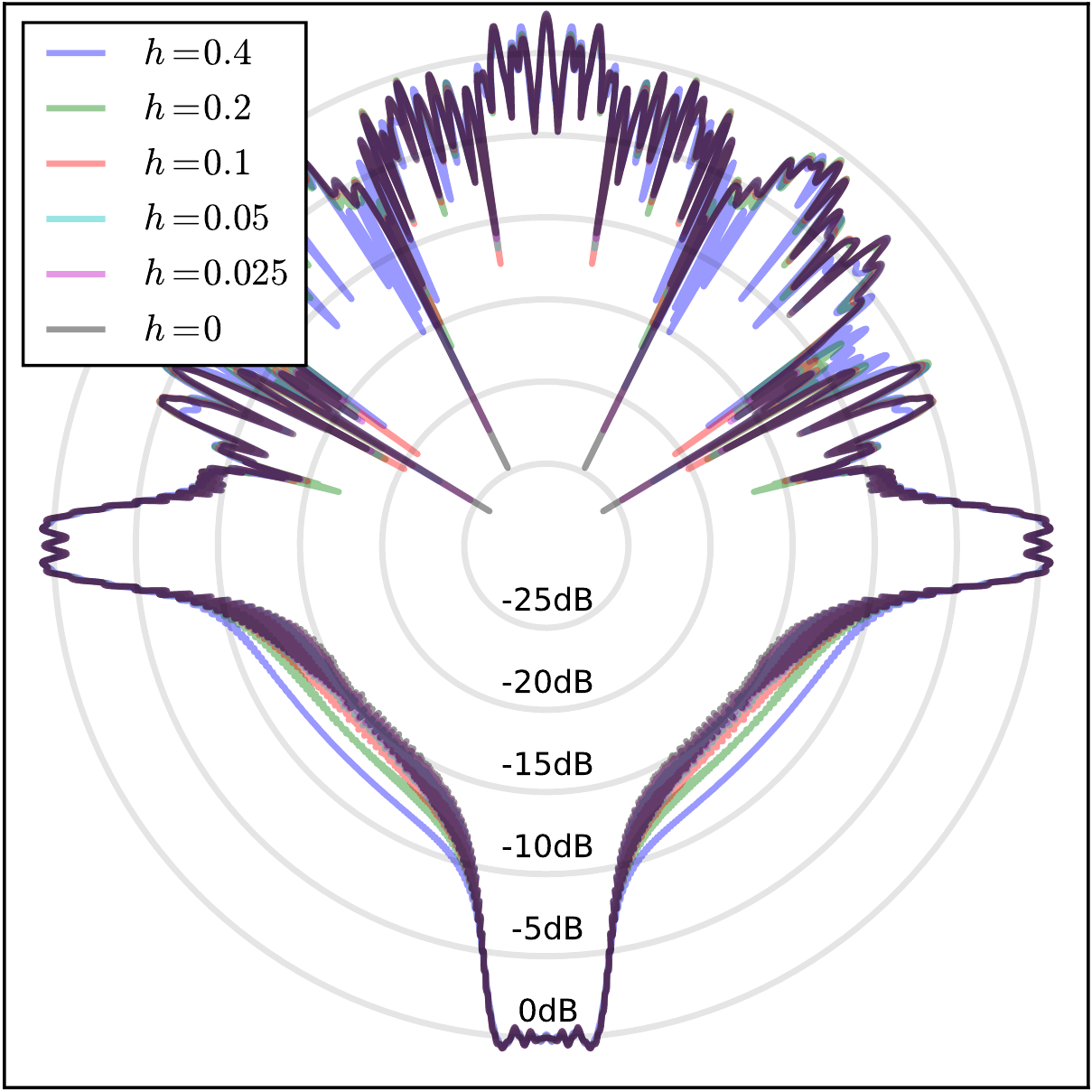}
    \caption{Several mono-static cross sections.}
  \end{subfigure}
  \qquad
  \begin{subfigure}[t]{.4\linewidth}
    \centering
    \includegraphics[width=1\linewidth]{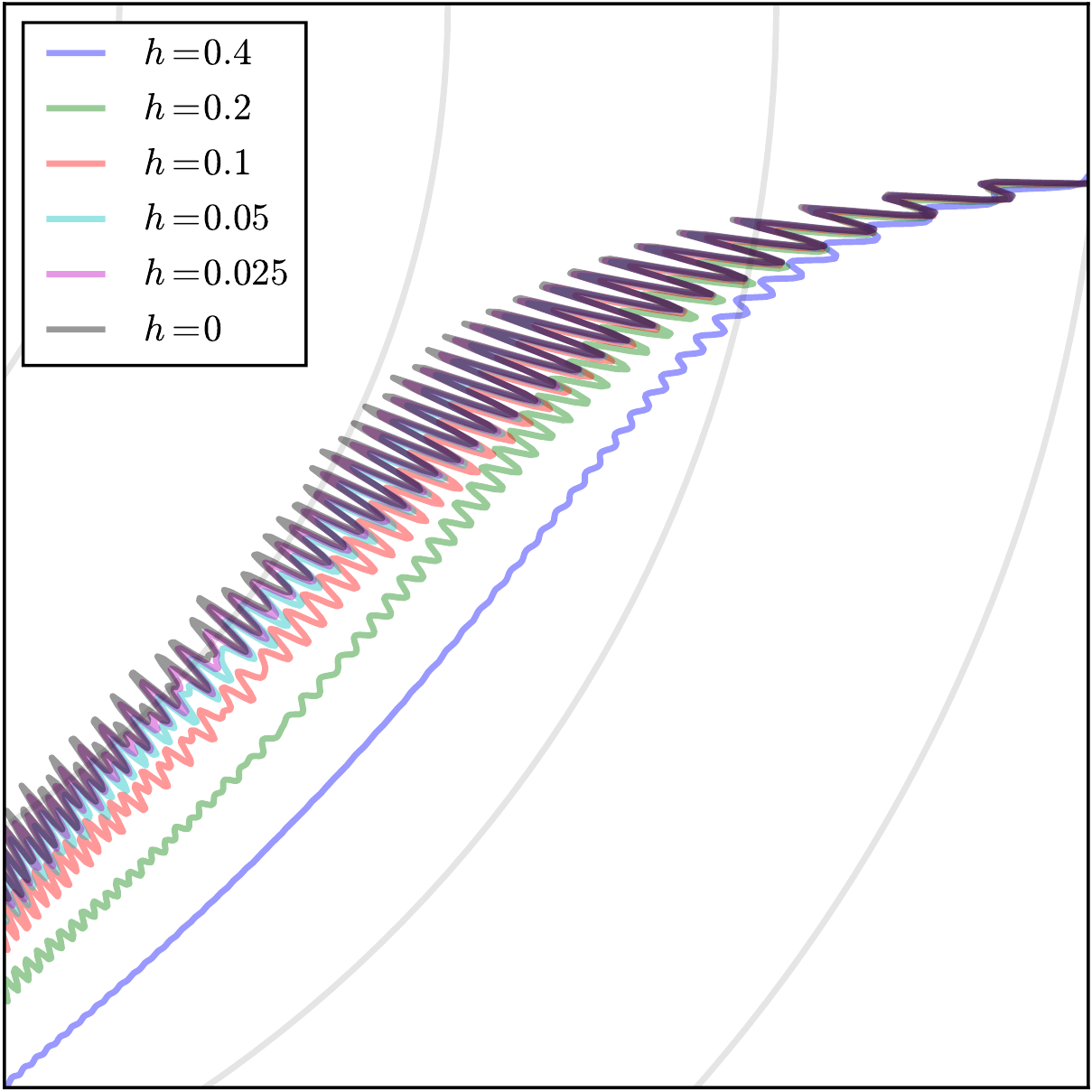}
    \caption{Details of several mono-static cross sections.}
  \end{subfigure}\\
  \begin{subfigure}[t]{.4\linewidth}
    \centering
    \includegraphics[width=1\linewidth]{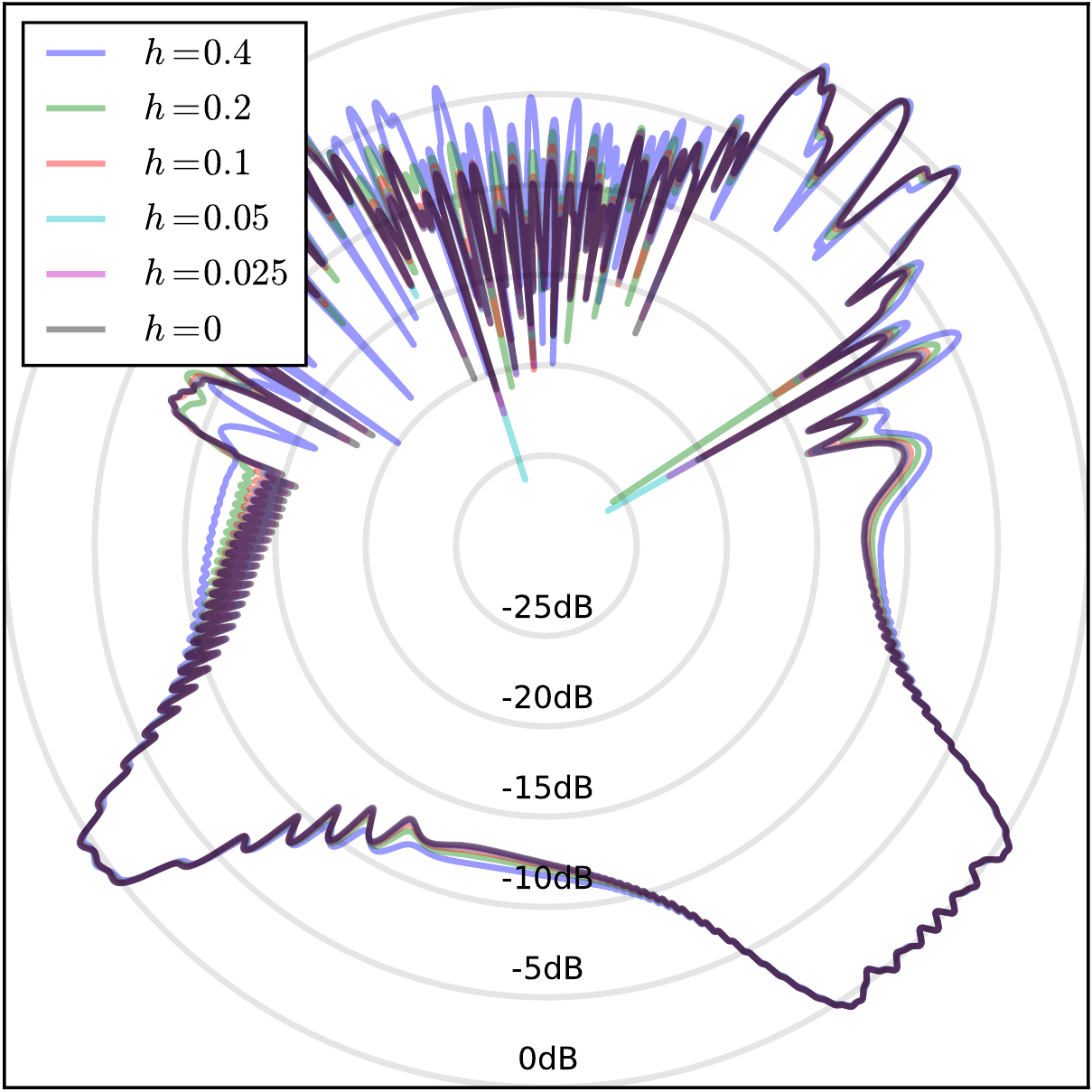}
    \caption{Several bi-static cross sections.}
  \end{subfigure}
  \qquad
  \begin{subfigure}[t]{.4\linewidth}
    \centering
    \includegraphics[width=1\linewidth]{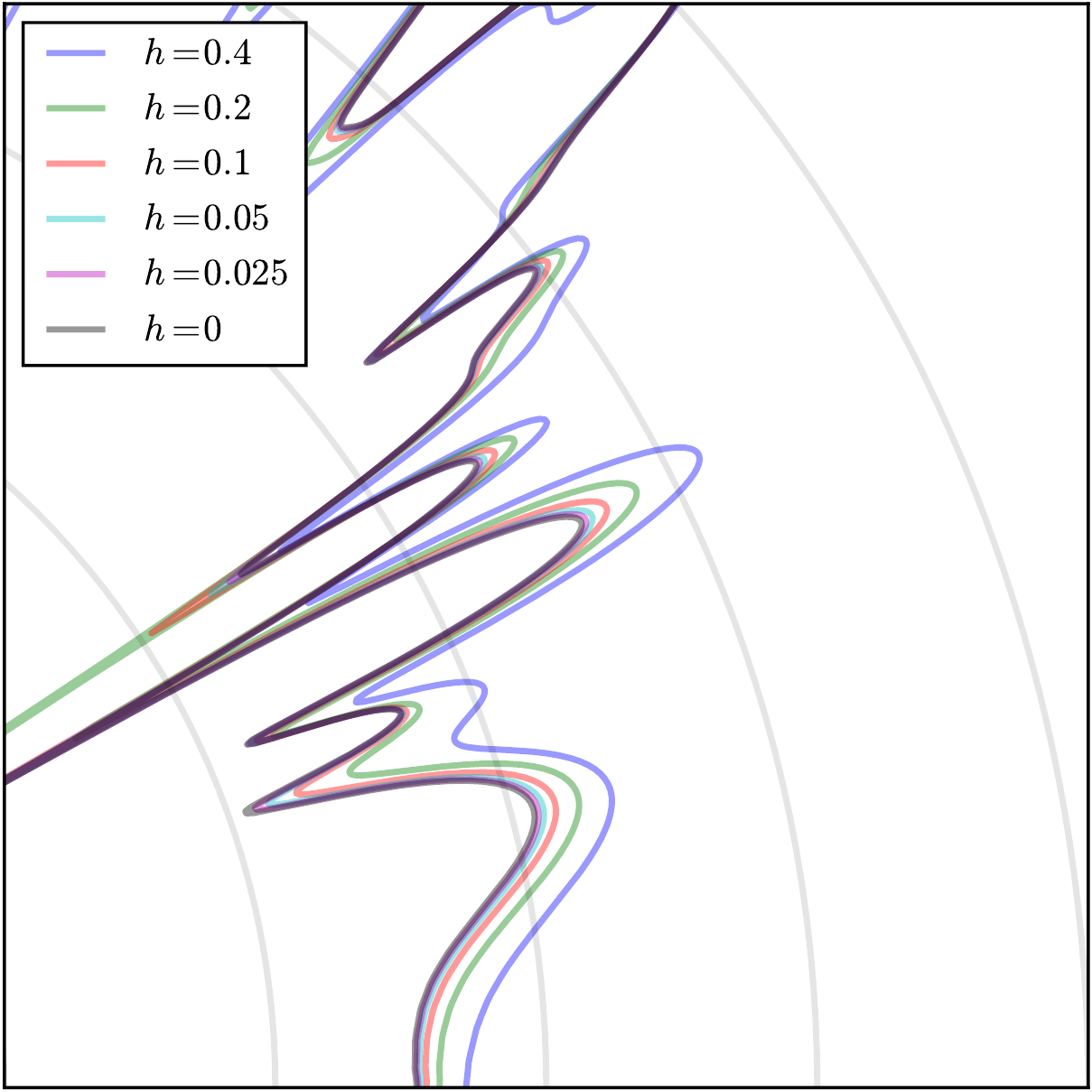}
    \caption{Details of several bi-static cross sections.}
  \end{subfigure}
  \caption{Example mono-static and bi-static cross sections for the
    Dirichlet problem 
    corresponding to several roundings of the geometry in
    Figure~\ref{fig_scattering}, captured on a disc of radius $15
    \approx 125\lambda$ from the origin.  The absolute value of the
    scattered field is plotted on a decibel $=10\log_{10}$ scale.}
  \label{fig_manyscs}
\end{figure}

The obvious question to ask is how close these solutions are to the
solution in the case of scattering from an exact polygon with corners.
Results of this experiment are shown in Figures~\ref{fig_regress},
\ref{fig_diffs}, \ref{fig_diffs_big}, and~\ref{fig_triangle}.
Convergence results of the far-field and
moderately near-field bi-static cross sections are reported in
Tables~\ref{tab_comb_54}, \ref{tab_comb_6}, \ref{tab_triangle52}
and~\ref{tab_triangle7}. Near-field convergence is given in
Table~\ref{tab_comb_near}.
In each case, the order of convergence of the
scattered field is commensurate with the scale of the rounding.

The errors in the value of the potential converge at a rate of roughly
first-order with respect to the rounding parameter. Slightly faster
convergence is actually observed, which may be due to the high
accuracy of the rounding and the smoothing effects of the layer
potential representation. We are currently investigating this
phenomena.  It is worth pointing out that in Figure~\ref{fig_diffs}
there are no correct digits in the solution (in a relative sense)
until the rounding is performed on a scale roughly equal to the
wavelength of the solution. The exact solution ($h = 0.0$) was
calculated by dyadic refinement of the edges of the polygon near the
corners to a scale of $10^{-10}$. The resulting integral equation was
solved using an $\mathcal L_2$ weighting scheme, as described
in~\cite{bremer_2012}.

\afterpage{
\begin{figure}[t!]
  \centering
  \begin{subfigure}[t]{.45\linewidth}
    \centering
    \includegraphics[width=1\linewidth]{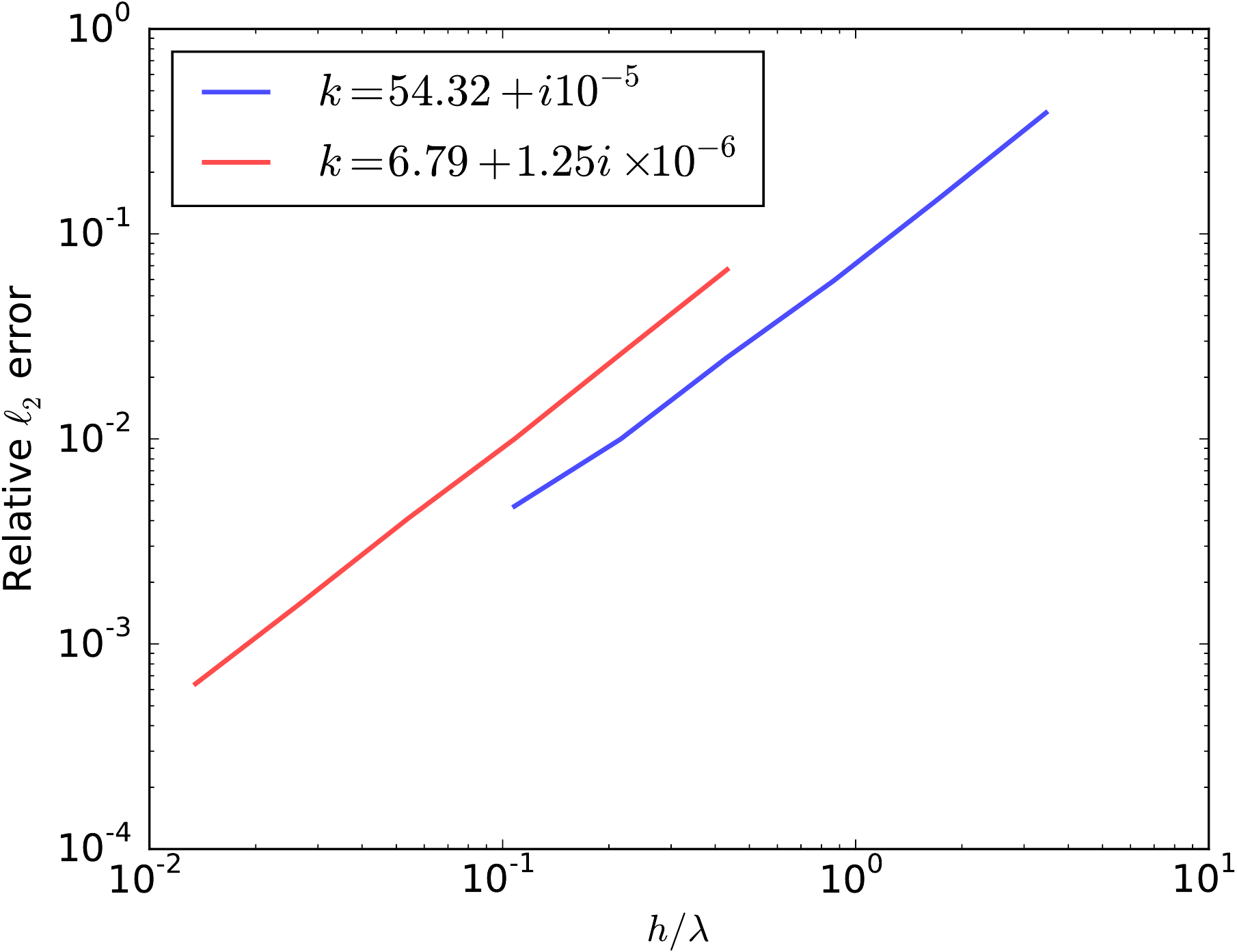}
    \caption{The empirical convergence for the comb shape with $k =
      54.32 + i \, 10^{-5}$ is $\mathcal O(h^{1.28})$ and that for $k
      = 6.79 + 1.25i \times 10^{-6}$ is $\mathcal O(h^{1.34})$.}
  \end{subfigure}
  \hfill
  \begin{subfigure}[t]{.45\linewidth}
    \centering
    \includegraphics[width=1\linewidth]{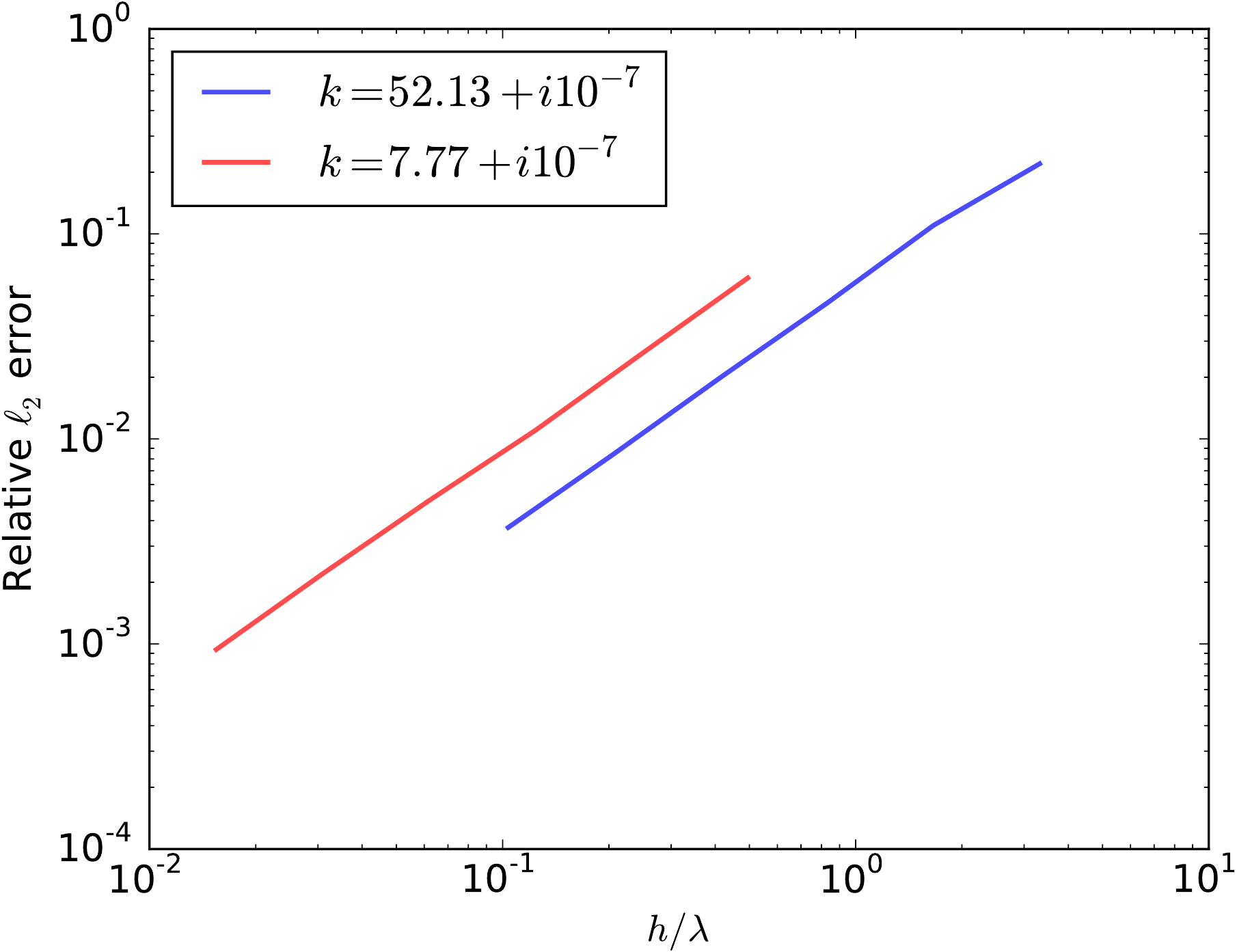}
    \caption{The empirical convergence for the triangle shape in
      Figure~\ref{fig_triangle} with $k =
      52.13 + i \, 10^{-7}$ is $\mathcal O(h^{1.19})$ and that for $k
      = 7.77 + i\, 10^{-6}$ is $\mathcal O(h^{1.21})$.}
  \end{subfigure}
  \caption{Plot of the relative $\ell_2$ error of the scattered
    potential for the
    Dirichlet problem versus rounding size in terms of wavelength. Both
    regimes exhibit commensurate convergence. }
  \label{fig_regress}
\end{figure}

\vspace{\baselineskip}

\begin{figure}[h!]
  \centering
  \begin{subfigure}[t]{.5\linewidth}
    \centering
    \includegraphics[width=1\linewidth]{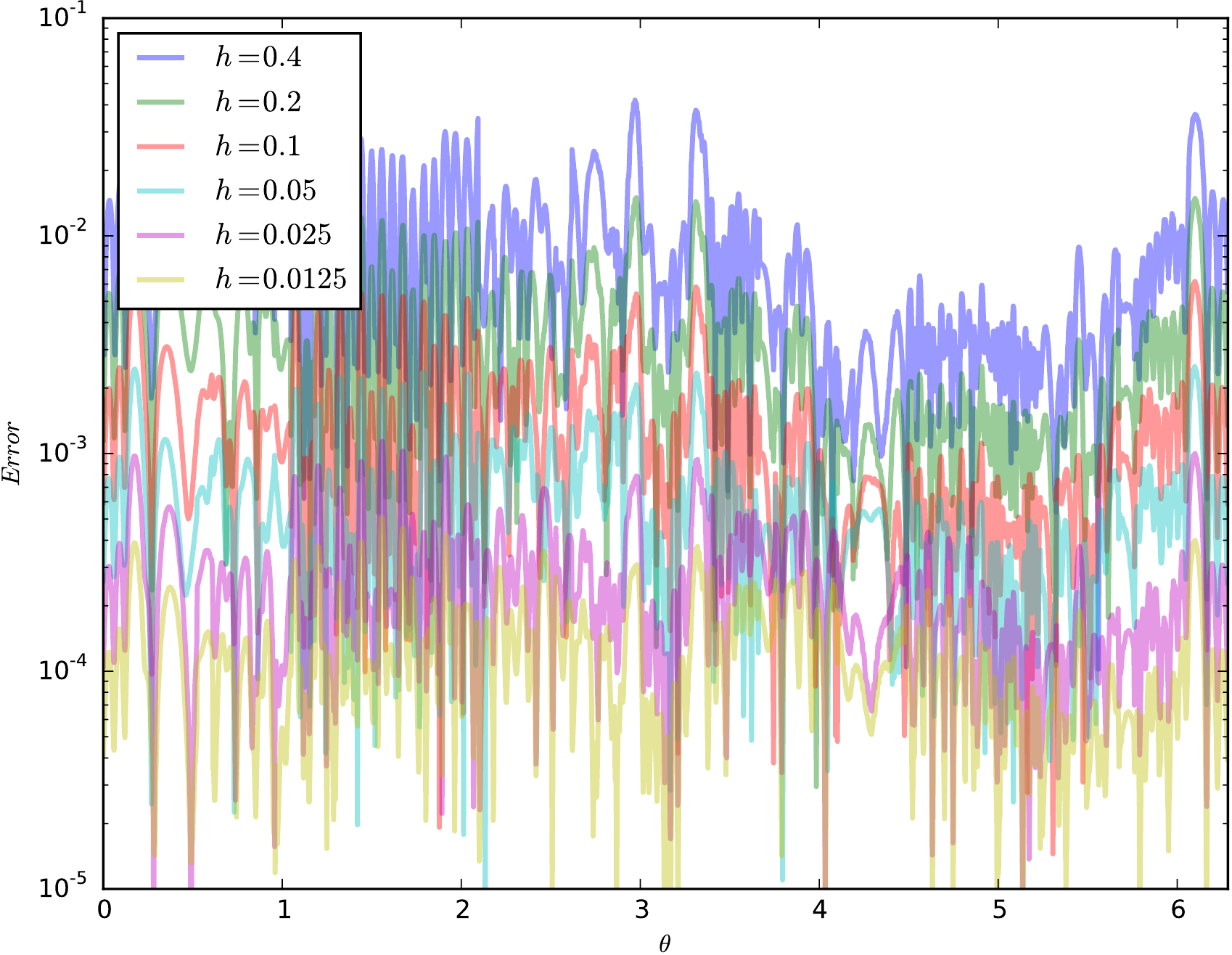}
    \caption{Errors in the real part of bi-static cross sections.}
  \end{subfigure}
  \hfill
  \begin{subtable}[b]{.45\linewidth}
    \centering
    \resizebox{\columnwidth}{!}{%
    \begin{tabular}{llll}\hline
      $h$ & $n$ & $RMSE$ error & Rel. $\ell_2$ error \\ \hline
      $0.4$    & $4608$ & $1.2 \times 10^{-2}$ & $3.9 \times 10^{-1}$ \\ 
      $0.2$    & $5312$ & $4.7 \times 10^{-3}$ & $1.5 \times 10^{-1}$ \\ 
      $0.1$    & $5824$ & $1.9 \times 10^{-3}$ & $5.9 \times 10^{-2}$ \\ 
      $0.05$   & $6752$ & $8.0 \times 10^{-4}$ & $2.5 \times 10^{-2}$ \\ 
      $0.025$  & $7296$ & $3.3 \times 10^{-4}$ & $1.0 \times 10^{-2}$ \\
      $0.0125$ & $7680$ & $1.5 \times 10^{-4}$ & $4.7 \times 10^{-3}$ \\
      $0.0$   & $14592$ & & \\
      \hline
    \end{tabular}  }
    \caption{Errors in the bi-static cross sections.}
    \label{tab_comb_54}
  \end{subtable}
  \caption{Errors in the complex valued bi-static cross section for
    the Dirichlet problem at a distance of $1000 \approx 8333\lambda$
    from the origin (as compared to the true corner scattering
    problem). The error converges approximately to first order in the
    rounding parameter $h$. In each case, the PDE was solved to
    roughly a precision of $10^{-9}$ in the $\mathcal L_\infty$ norm
    (as determined by testing against a known solution).}
  \label{fig_diffs}
\end{figure}
\clearpage
}

\afterpage{
\begin{figure}[h]
  \centering
  \begin{subfigure}[t]{.5\linewidth}
    \centering
    \includegraphics[width=1\linewidth]{plot73-crop.pdf}
    \caption{Total field for $k = 6.79 + 1.25i \times 10^{-6}$. The
      wavelength is approximately $\lambda \approx .93$.}
  \end{subfigure}
  \hfill
  \begin{subtable}[b]{.45\linewidth}
    \centering
    \resizebox{\columnwidth}{!}{%
    \begin{tabular}{llll}\hline
      $h$ & $n$ & $RMSE$ error & Rel. $\ell_2$ error \\ \hline
      $0.4$ & $3872$ & $2.8 \times 10^{-3}$ & $6.7 \times 10^{-2}$ \\ 
      $0.2$ & $4576$ & $1.1 \times 10^{-3}$ & $2.6 \times 10^{-2}$ \\ 
      $0.1$ & $5088$ & $4.4 \times 10^{-4}$ & $1.0 \times 10^{-2}$ \\ 
      $0.05$ & $5632$ & $1.7 \times 10^{-4}$ & $4.1 \times 10^{-3}$ \\ 
      $0.025$ & $6016$ & $6.8 \times 10^{-5}$ & $1.6 \times 10^{-3}$ \\
      $0.0125$ & $6400$ & $2.7 \times 10^{-5}$ & $6.4 \times 10^{-4}$ \\
      $0.0$   & $13312$ & & \\
      \hline
    \end{tabular}  }
    \caption{Errors in the bi-static cross sections for $k = 6.79 +
      1.25i \times 10^{-6}$ at a radius of $1000 \approx 1081\lambda$.}
    \label{tab_comb_6}
  \end{subtable} \\
  \vspace{\baselineskip}
  \begin{subfigure}[t]{.5\linewidth}
    \centering
    \includegraphics[width=1\linewidth]{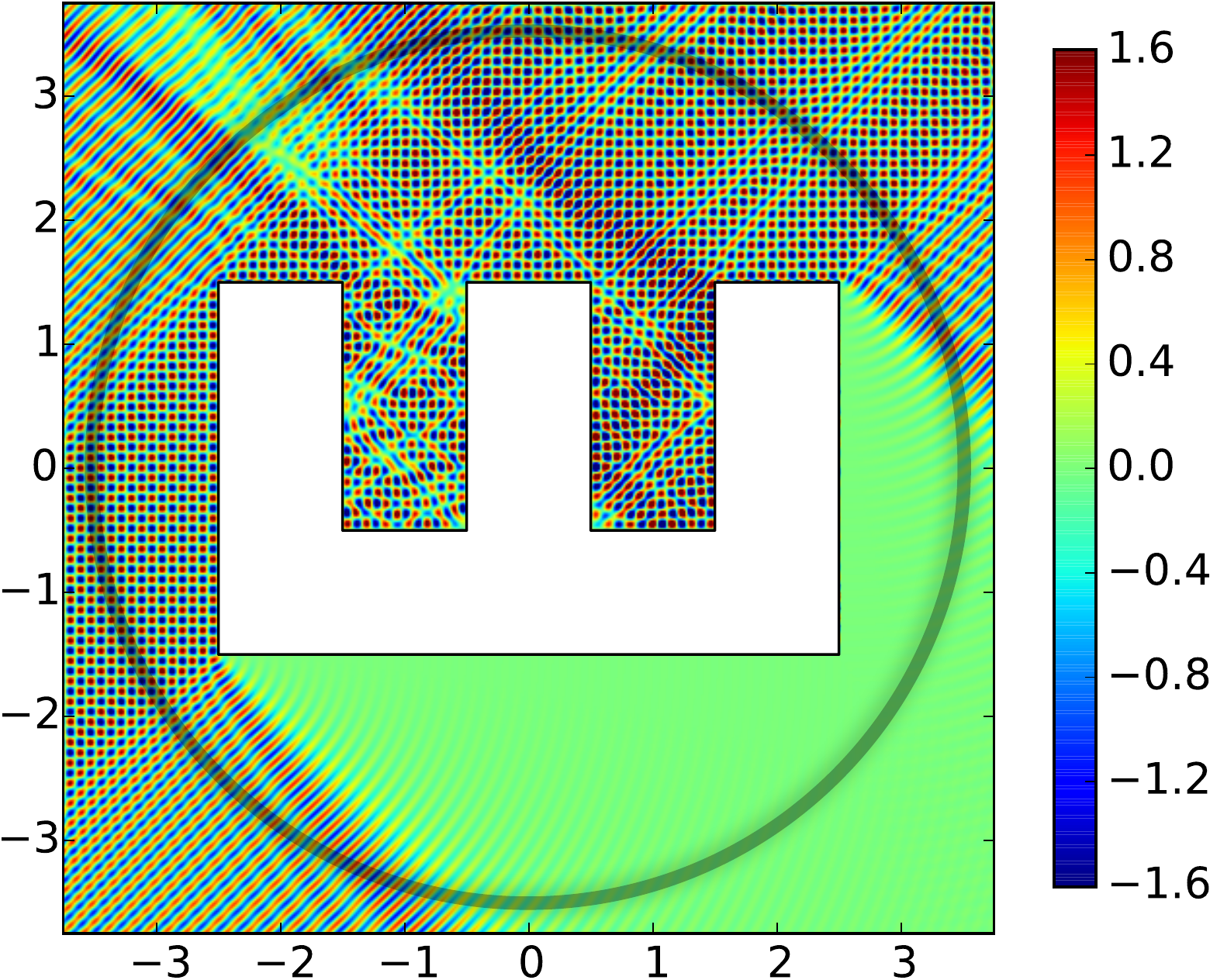}
    \caption{Total field for $k = 54.32 + i \times 10^{-5}$ along with
      testing curve for convergence of scattered field. The corners
      in this plot were rounded with a parameter $h=0.025$.}
  \end{subfigure}
  \hfill
  \begin{subtable}[b]{.45\linewidth}
    \centering
    \resizebox{\columnwidth}{!}{%
    \begin{tabular}{llll}\hline
      $h$ & $n$ & $RMSE$ error & Rel. $\ell_2$ error \\ \hline
      $0.4$    & $4608$  & $ 2.1 \times 10^{-1}$ & $ 2.7 \times 10^{-1}$ \\ 
      $0.2$    & $5312$  & $ 8.2 \times 10^{-2}$ & $ 1.1 \times 10^{-1}$ \\ 
      $0.1$    & $5824$  & $ 3.2 \times 10^{-2}$ & $ 4.1 \times 10^{-2}$ \\ 
      $0.05$   & $6752$  & $ 1.3 \times 10^{-2}$ & $ 1.6 \times 10^{-2}$ \\ 
      $0.025$  & $7296$  & $ 4.9 \times 10^{-3}$ & $ 6.4 \times 10^{-3}$ \\
      $0.0125$ & $7680$  & $ 2.0 \times 10^{-3}$ & $ 2.5 \times 10^{-3}$ \\
      $0.0$    & $14592$ & & \\
      \hline
    \end{tabular}  }
    \caption{Errors in the bi-static cross section for $k = 54.32 + i
      \times 10^{-5}$ in the near-field at a radius of $3.5 \approx
      30\lambda$.}
    \label{tab_comb_near}
  \end{subtable}  
  \caption{Errors in the complex bi-static cross section for the
    Dirichlet problem (as compared
    to the true corner scattering problem). The error converges
    approximately to first order in the rounding parameter $h$. In
    each case, the PDE was solved to roughly a precision of $10^{-9}$ in
    the $\mathcal L_\infty$ norm (as determined by testing against a
    known solution).}
  \label{fig_diffs_big}
\end{figure}
\clearpage
}

\afterpage{
  \begin{figure}[h]
    \centering
    \begin{subfigure}[t]{.45\linewidth}
      \centering
      \includegraphics[width=1\linewidth]{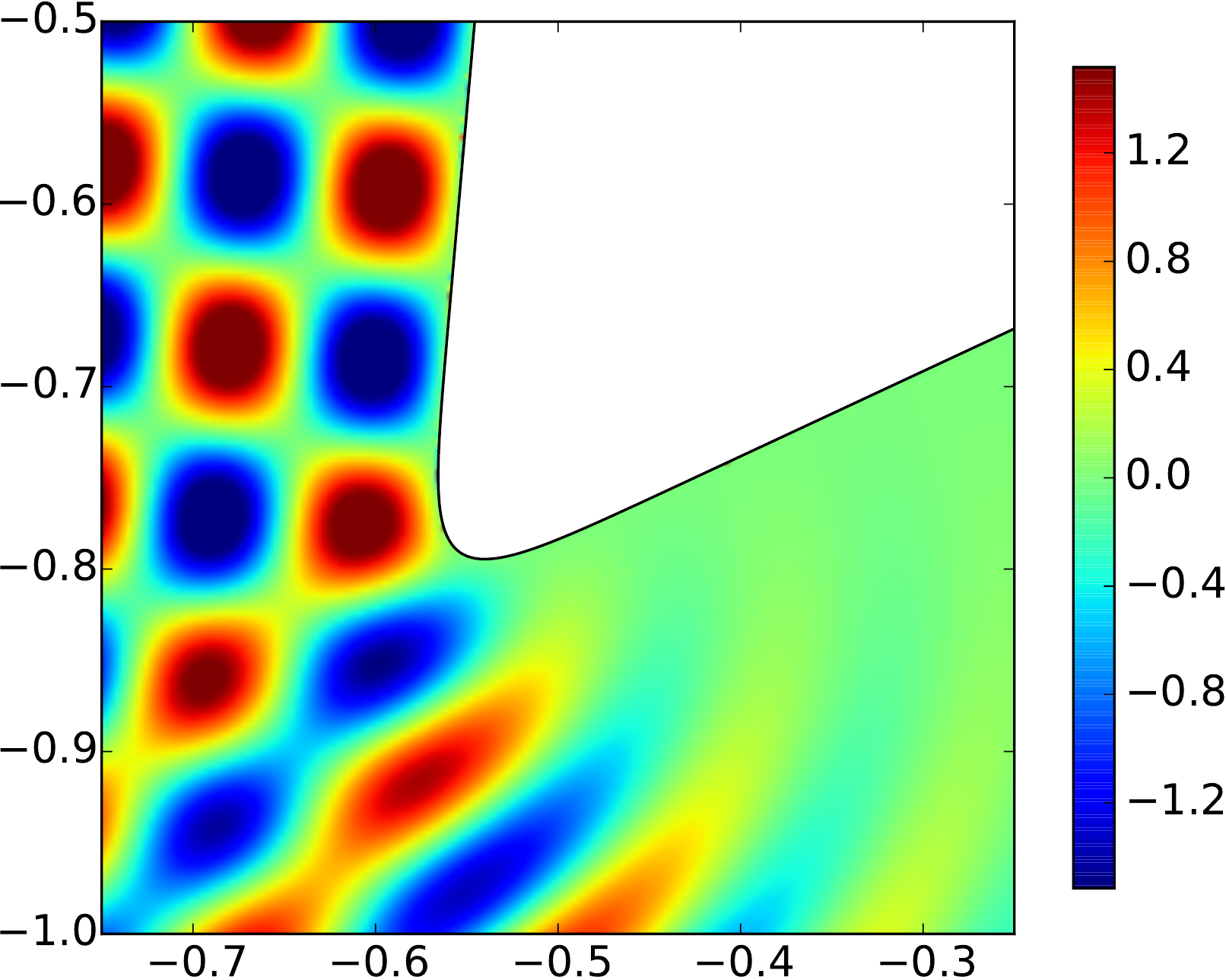}
      \caption{Rounding of $h = 0.4$, $k = 52.13 + i 10^{-7}$.}
    \end{subfigure} \hfill
    \begin{subfigure}[t]{.45\linewidth}
      \centering
      \includegraphics[width=1\linewidth]{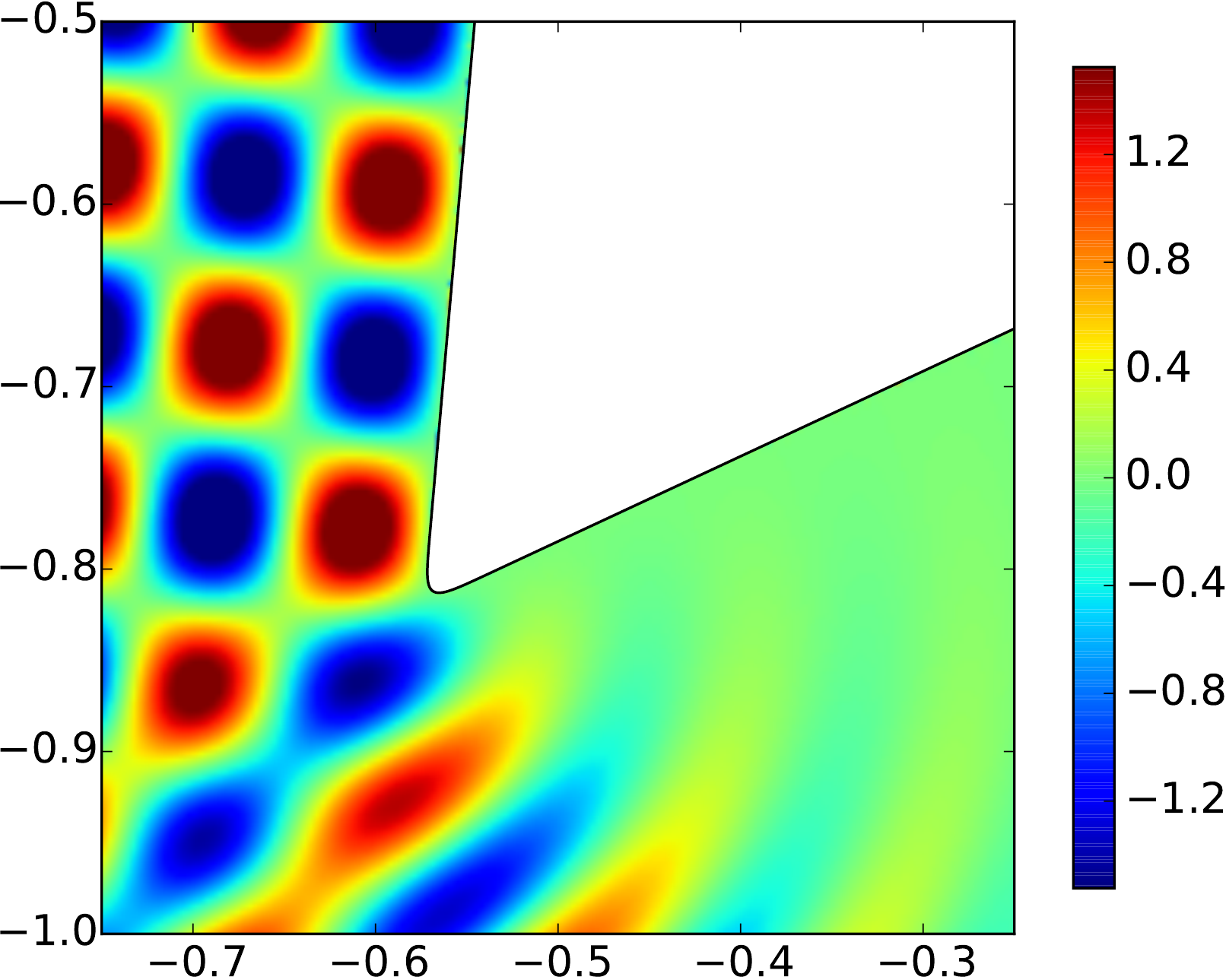}
      \caption{Rounding of $h = 0.1$, $k = 52.13 + i 10^{-7}$.}
    \end{subfigure} \\ \vspace{\baselineskip}
    \begin{subfigure}[t]{.45\linewidth}
      \centering
      \includegraphics[width=1\linewidth]{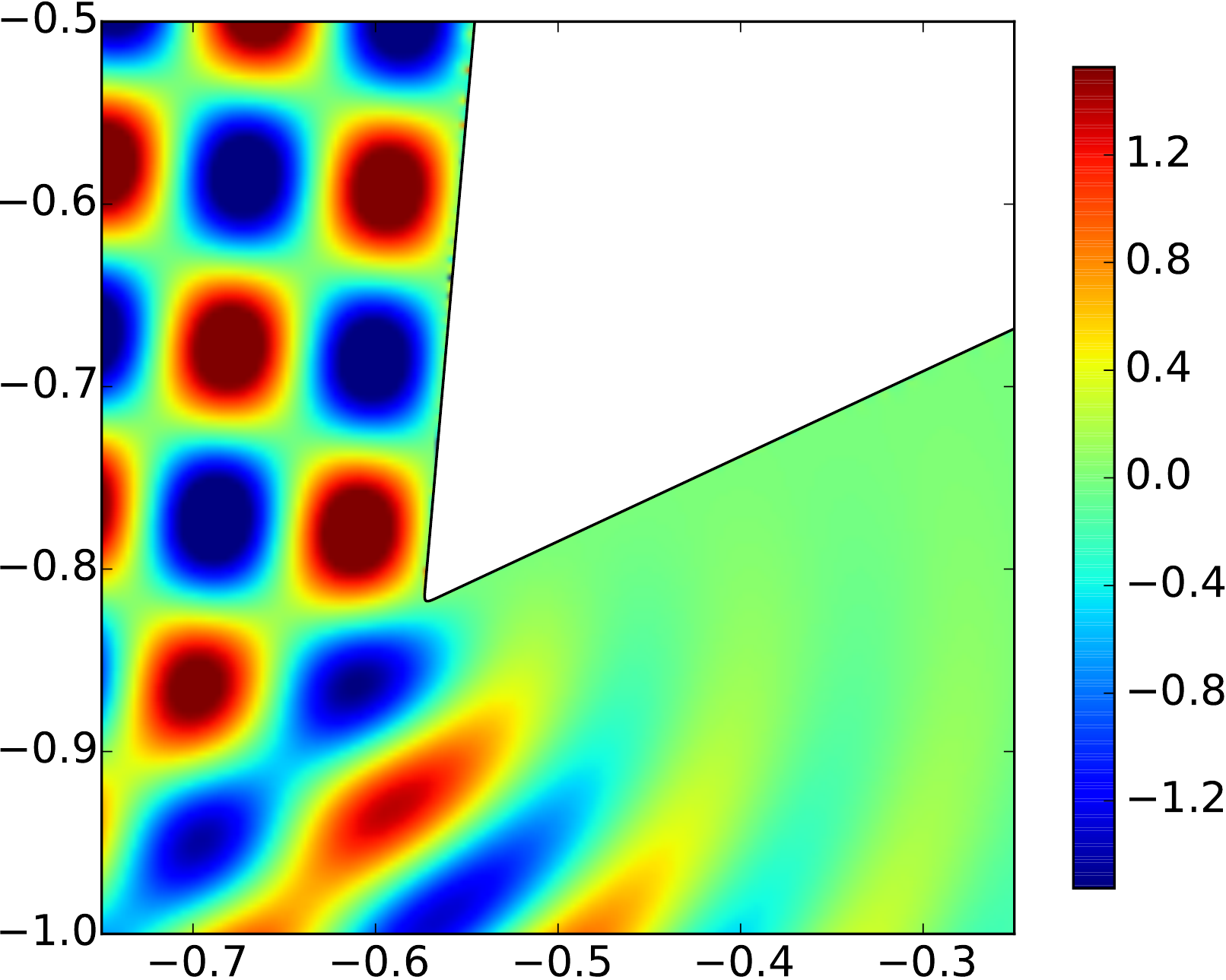}
      \caption{Rounding of $h = 0.025$, $k = 52.13 + i 10^{-7}$.}
    \end{subfigure} \hfill
    \begin{subfigure}[t]{.45\linewidth}
      \centering
      \includegraphics[width=1\linewidth]{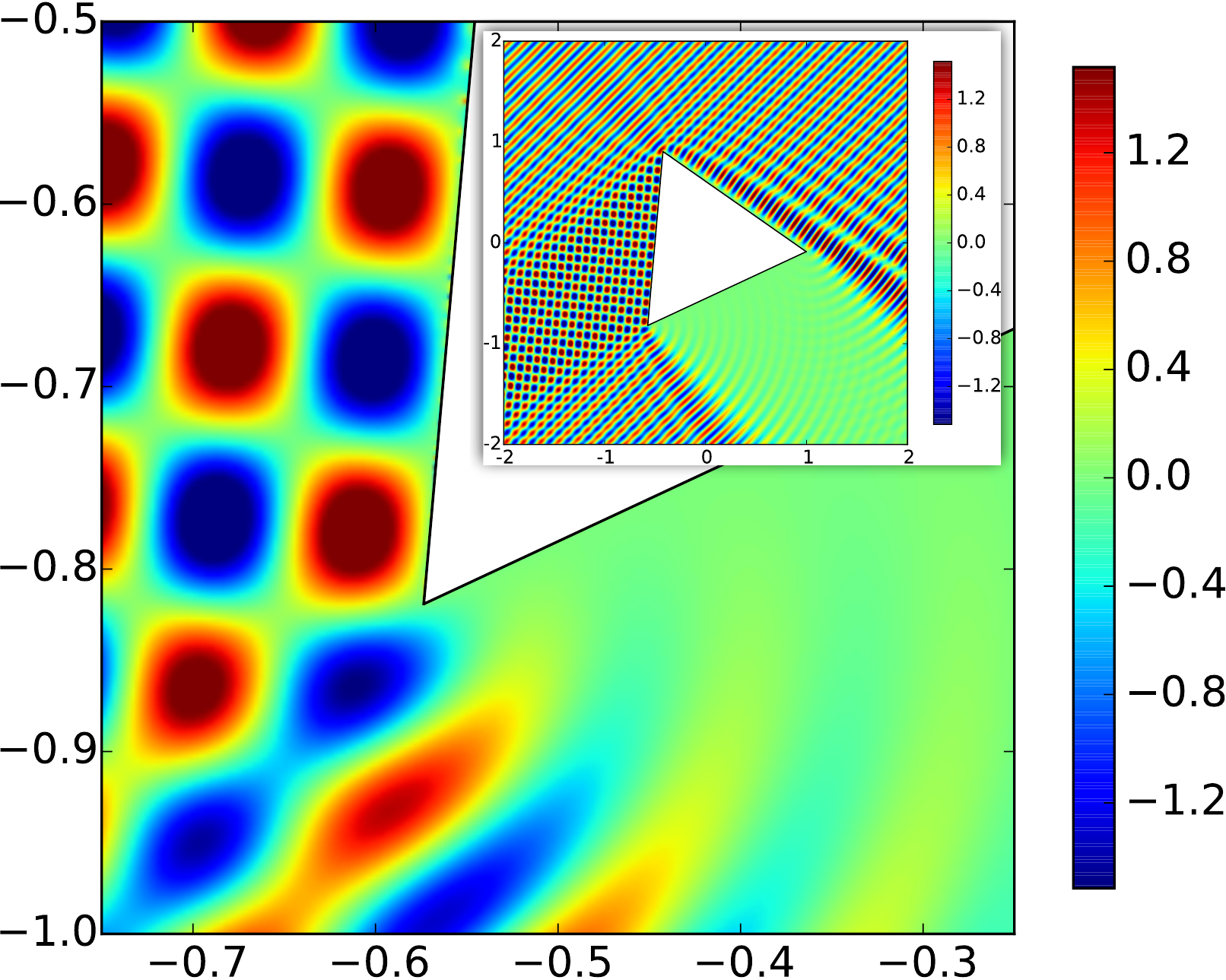}
      \caption{The corner problem, $k = 52.13 + i 10^{-7}$.}
      \label{fig_triangle1}
    \end{subfigure} \\ \vspace{\baselineskip}
  \begin{subtable}[b]{.45\linewidth}
    \centering
    \resizebox{\columnwidth}{!}{%
    \begin{tabular}{llll}\hline
      $h$ & $n$ & $RMSE$ error & Rel. $\ell_2$ error \\ \hline
      $0.4$    & $1056$ & $ 5.0 \times 10^{-2}$ & $ 2.2 \times 10^{-1}$ \\ 
      $0.2$    & $1248$ & $ 2.4 \times 10^{-2}$ & $ 1.1 \times 10^{-1}$ \\ 
      $0.1$    & $1344$ & $ 1.1 \times 10^{-2}$ & $ 4.6 \times 10^{-2}$ \\ 
      $0.05$   & $1440$ & $ 4.5 \times 10^{-3}$ & $ 2.0 \times 10^{-2}$ \\ 
      $0.025$  & $1632$ & $ 1.9 \times 10^{-3}$ & $ 8.5 \times 10^{-3}$ \\
      $0.0125$ & $1728$ & $ 8.4 \times 10^{-4}$ & $ 3.7 \times 10^{-3}$ \\
      $0.0$    & $3456$ & & \\
      \hline
    \end{tabular}   }
    \caption{Errors in the bi-static cross section 
      at $r = 10$ for $k
      = 52.13 + i 10^{-7}$.}
    \label{tab_triangle52}
  \end{subtable} \hfill
  \begin{subtable}[b]{.45\linewidth}
    \centering
    \resizebox{\columnwidth}{!}{%
    \begin{tabular}{llll}\hline
      $h$ & $n$ & $RMSE$ error & Rel. $\ell_2$ error \\ \hline
      $0.4$    & $1056$ & $ 1.5 \times 10^{-2}$ & $ 6.2 \times 10^{-2}$ \\ 
      $0.2$    & $1152$ & $ 6.3 \times 10^{-3}$ & $ 2.6 \times 10^{-2}$ \\ 
      $0.1$    & $1248$ & $ 2.7 \times 10^{-3}$ & $ 1.1 \times 10^{-2}$ \\ 
      $0.05$   & $1344$ & $ 1.2 \times 10^{-3}$ & $ 5.0 \times 10^{-3}$ \\ 
      $0.025$  & $1536$ & $ 5.1 \times 10^{-3}$ & $ 2.2 \times 10^{-3}$ \\
      $0.0125$ & $1632$ & $ 2.2 \times 10^{-4}$ & $ 9.4 \times 10^{-4}$ \\
      $0.0$    & $3360$ & & \\
      \hline
    \end{tabular}  }
    \caption{Errors in the bi-static cross section 
      at $r = 10$ for $k
      = 7.77 + i 10^{-6}$.}
    \label{tab_triangle7}    
  \end{subtable}
  \caption{A depiction of sound-soft (Dirichlet) scattering for
    various roundings, along with convergence of the bi-static cross
    section in the moderate near-field.  In each case, the PDE was
    solved to roughly a precision of $10^{-9}$ in the
    $\mathcal L_\infty$ norm (as determined by testing against a known
    solution).}
  \label{fig_triangle}
  \end{figure}
  \clearpage
}

\subsection{Scattering from smoothed polygons: Sound-hard}
\label{sec_neumann}

We now present results corresponding to the {\em sound-hard}
scattering problem, i.e. the exterior 
Neumann problem for the Helmholtz equation:
\begin{equation}
\begin{split}
( \Delta + k^2 ) u^{tot} &= 0  \qquad \text{in } \bbR^2\setminus\Omega,  \\
\frac{u^{tot}}{\partial n} &= 0  \qquad \text{on } \Gamma = \partial\Omega,
\end{split}
\end{equation}
along with suitable radiation conditions at infinity.
Representing the scattered solution $u$ using a single-layer 
potential:
\begin{equation}\label{eq_neu_rep}
  u = \mathcal S_k\sigma,
\end{equation}
we have the following second-kind integral equation along $\Gamma$
for the density $\sigma$: 
\begin{equation}\label{eq_inteq_neumann}
  -\frac{\sigma}{2}  + \mathcal S_k'  \sigma = -\frac{\partial
    u^{inc}}{\partial n} \qquad 
\text{on } \Gamma,
\end{equation}
where $\mathcal S_k' = \partial  \mathcal S_k / \partial n$ and is 
interpreted suitably as an on-surface limit.  
As before, our reference solver for the true corner problem follows
the method detailed in~\cite{bremer_2012}.

We also recall that using representation~\eqref{eq_neu_rep} may yield {\em
  spurious resonance} in the resulting integral equation for values of $k$
which correspond to eigenvalues of the interior Laplace Dirichlet problem. For
simplicity we have chosen $k$ to avoid these values.  Well-conditioned
combined-field representations exist which are invertible for all values of $k$
with $\Im{k}\geq 0$, but they involve the composition of layer potentials, as
in~\eqref{eq_neu_cfie}, not merely the summation~\cite{colton_kress}.  After
solving~\eqref{eq_inteq_neumann}, we evaluate the scattered field as in the
previous section, using the fast multipole method for the two-dimensional
Helmholtz equation and standard Gaussian quadrature.

\begin{figure}[t]
  \centering
  \begin{subfigure}[t]{.3\linewidth}
    \centering
    \includegraphics[width=1\linewidth]{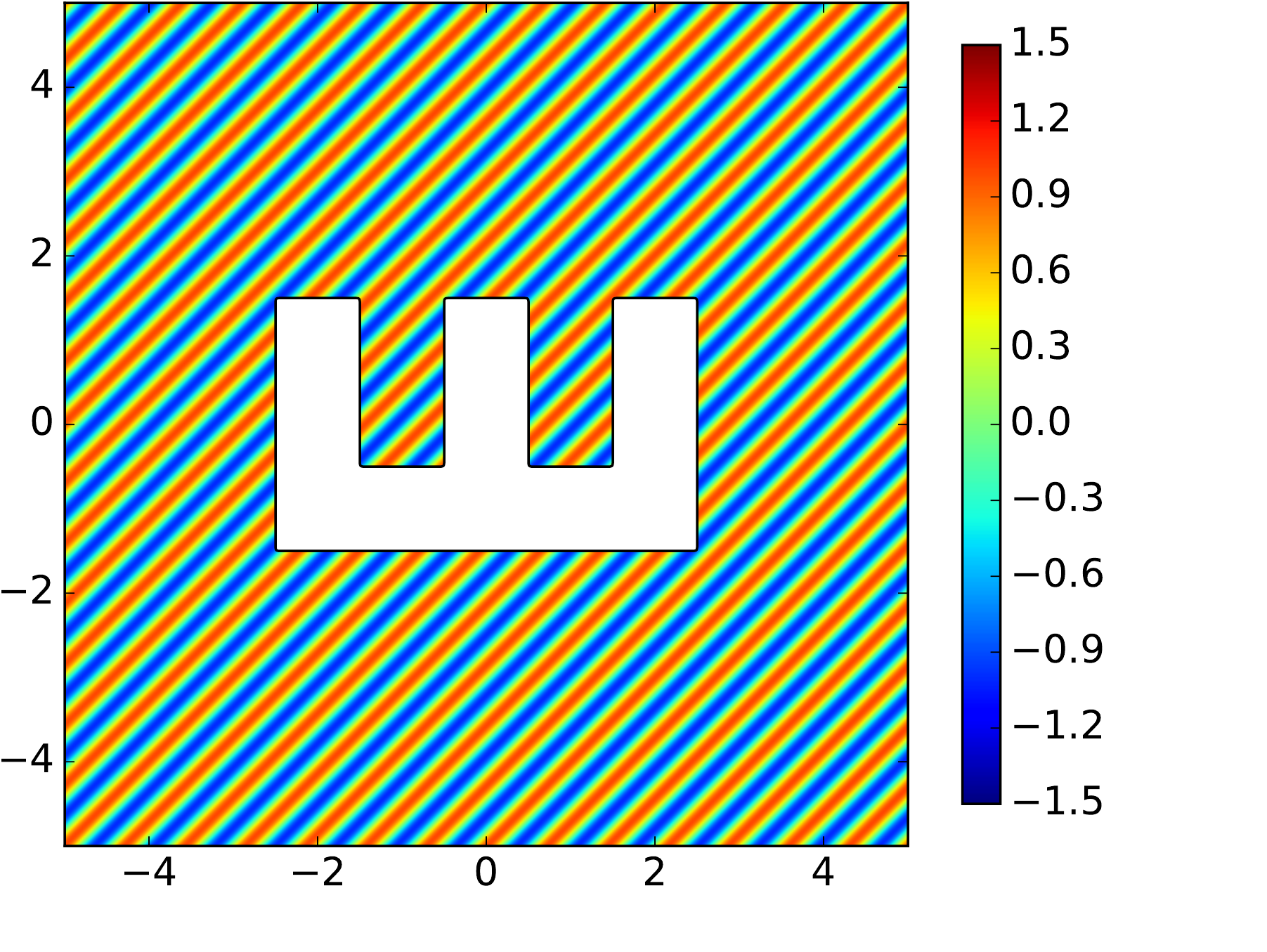}
    \caption{The incoming field.}
  \end{subfigure}
  \quad
  \begin{subfigure}[t]{.3\linewidth}
    \centering
    \includegraphics[width=1\linewidth]{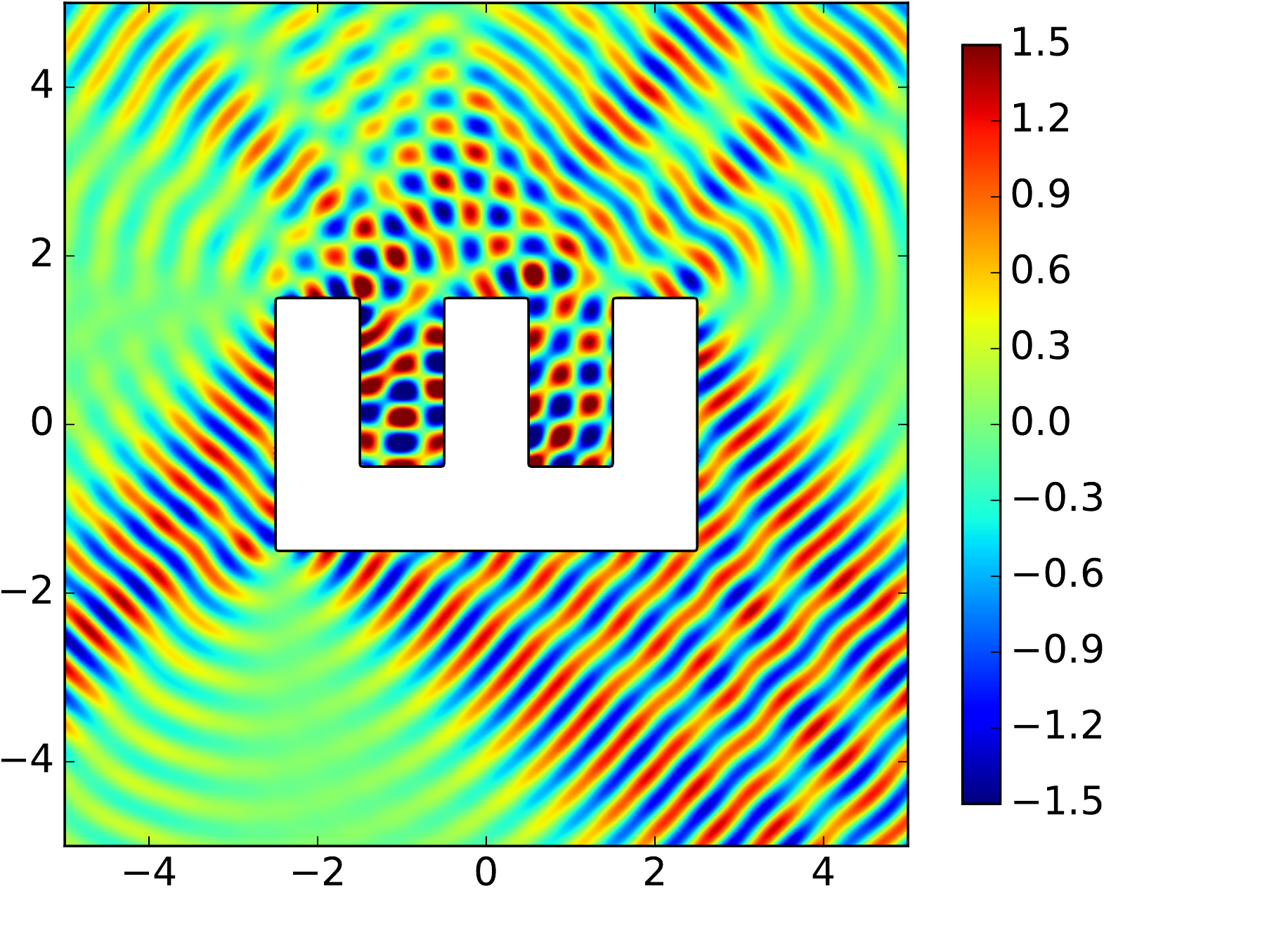}
    \caption{The scattered field.}
  \end{subfigure}
  \quad
  \begin{subfigure}[t]{.3\linewidth}
    \centering
    \includegraphics[width=1\linewidth]{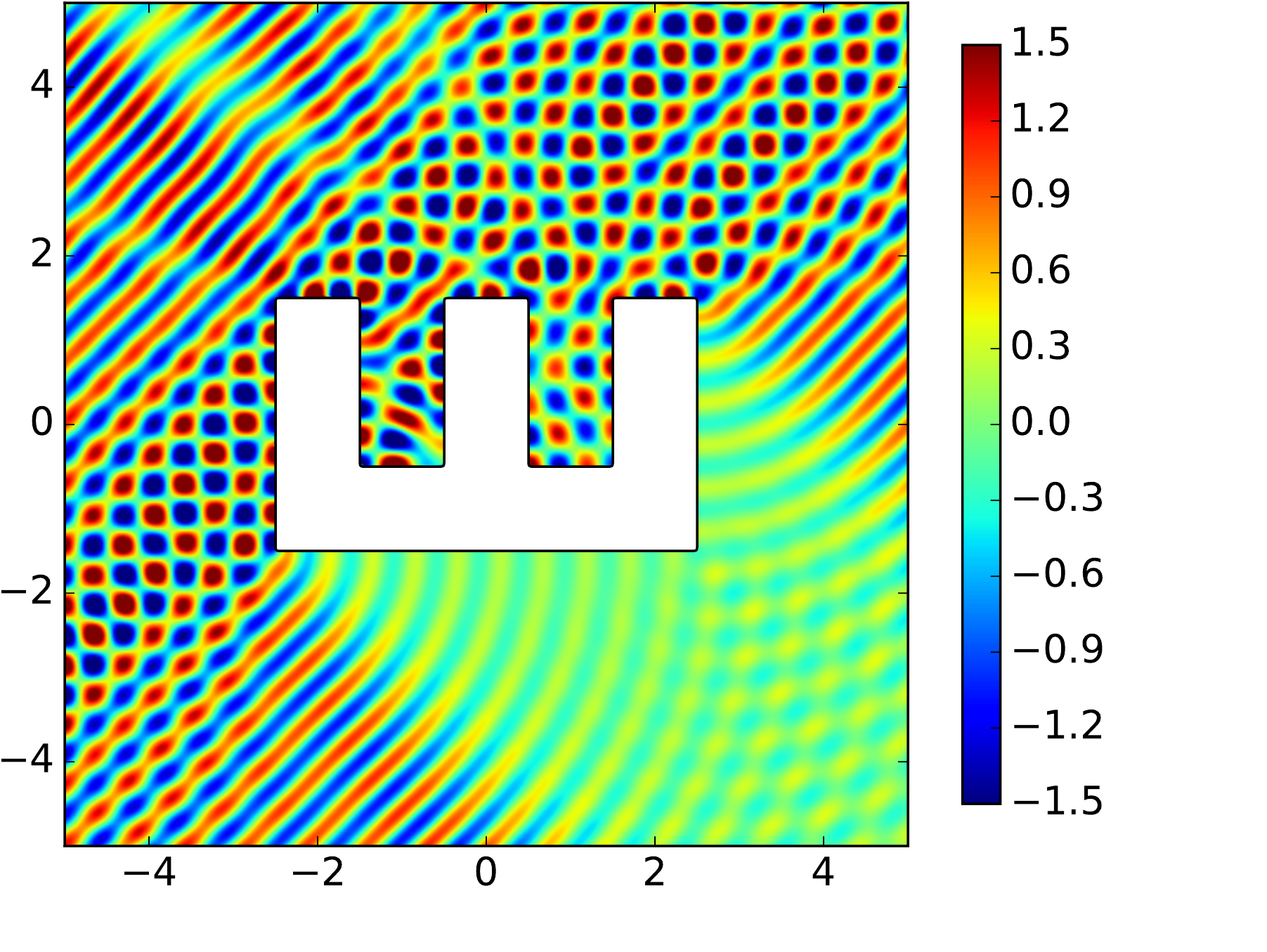}
    \caption{The total field.}
  \end{subfigure}
  \caption{Example exterior sound-hard (Neumann) 
    scattering problem for $k=12.43 + i 10^{-5}$. The real
    part of all fields are shown. The angle of the incident plane wave is
    $\phi = -\pi/4$.}
  \label{fig_neumann}
\end{figure}

The following simulations are obtained from driving the scattering
problem by setting $u^{inc}$ to be a two-dimensional plane-wave, as
before, traveling in the direction of the angle $\phi$:
\begin{equation}
u_\phi^{inc}(\bx) = e^{ik(x\cos\phi + y\sin\phi)}.
\end{equation}
See
Figure~\ref{fig_neumann} for a depiction of an incoming plane wave
$u_{-\pi/4}^{inc}$, scattered field $u$, and total field $u^{tot}$
with Neumann boundary conditions. In this example, $k = 12.43 + i
10^{-5}$, corresponding to a wavelength of
$\lambda = 2\pi/\Re{k} \approx 0.505 $.

The accuracy of the integral equation solver is tested, as before
in~\eqref{eq_testsolution}, 
by
comparison with a known test solution.
In order to study the effect of corner rounding for the Neumann
problem, we reproduce several of the experiments performed in the
Dirichlet case. In particular, we compare the bi-static SCS of the
true corner problem with that from successive roundings. See
Figures~\ref{fig_diffs_neumann}, \ref{fig_neu_triangle},
and~\ref{fig_neumann_regress} for plots of Neumann solutions and 
convergence results.

As in the Dirichlet case, as the size of the region that 
is rounded near the corners is
decreased, to below sub-wavelength, we see a convergence of the
bi-static cross section of roughly first-order.  We simulated
the Neumann problem at the same frequencies as in the Dirichlet case
for comparison.

\afterpage{
\begin{figure}[h]
  \centering
  \begin{subfigure}[t]{.5\linewidth}
    \centering
    \includegraphics[width=1\linewidth]{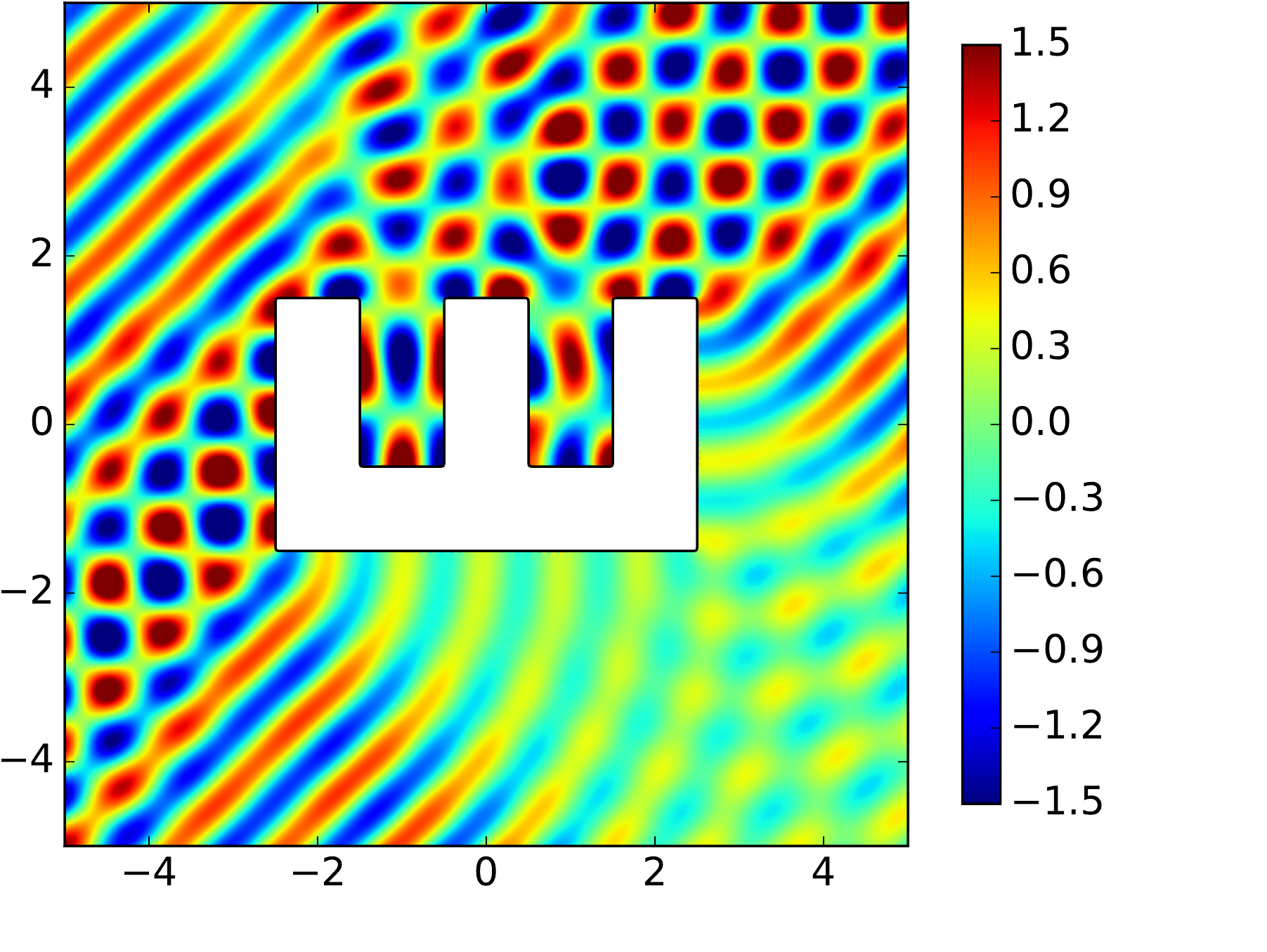}
    \caption{Total field for $k = 6.79 + 1.25i \times 10^{-6}$. The
      wavelength is approximately $\lambda \approx .93$, and the
      rounding parameter was $h=0.2$.}
  \end{subfigure}
  \hfill
  \begin{subtable}[b]{.45\linewidth}
    \centering
    \resizebox{\columnwidth}{!}{%
    \begin{tabular}{llll}\hline
      $h$ & $n$ & $RMSE$ error & Rel. $\ell_2$ error \\ \hline
      $0.4$ & $3872$ & $ 2.0 \times 10^{-3}$ & $ 4.6 \times 10^{-2}$ \\ 
      $0.2$ & $4576$ & $ 7.3 \times 10^{-4}$ & $ 1.7 \times 10^{-2}$ \\ 
      $0.1$ & $5088$ & $ 2.8 \times 10^{-4}$ & $ 6.7 \times 10^{-3}$ \\ 
      $0.05$ & $5632$ & $ 1.1 \times 10^{-4}$ & $ 2.7 \times 10^{-3}$ \\ 
      $0.025$ & $6016$ & $ 4.4 \times 10^{-5}$ & $ 1.1 \times 10^{-3}$ \\
      $0.0125$ & $6400$ & $ 1.8 \times 10^{-5}$ & $ 4.2 \times 10^{-4}$ \\
      $0.0$   & $13312$ & & \\
      \hline
    \end{tabular}  }
    \caption{Errors in the bi-static cross sections for $k = 6.79 +
      1.25i \times 10^{-6}$ at a radius of $1000 \approx 1081\lambda$.}
    \label{tab_neu_comb_6}
  \end{subtable} \\
  \vspace{\baselineskip}
  \begin{subfigure}[t]{.5\linewidth}
    \centering
    \includegraphics[width=1\linewidth]{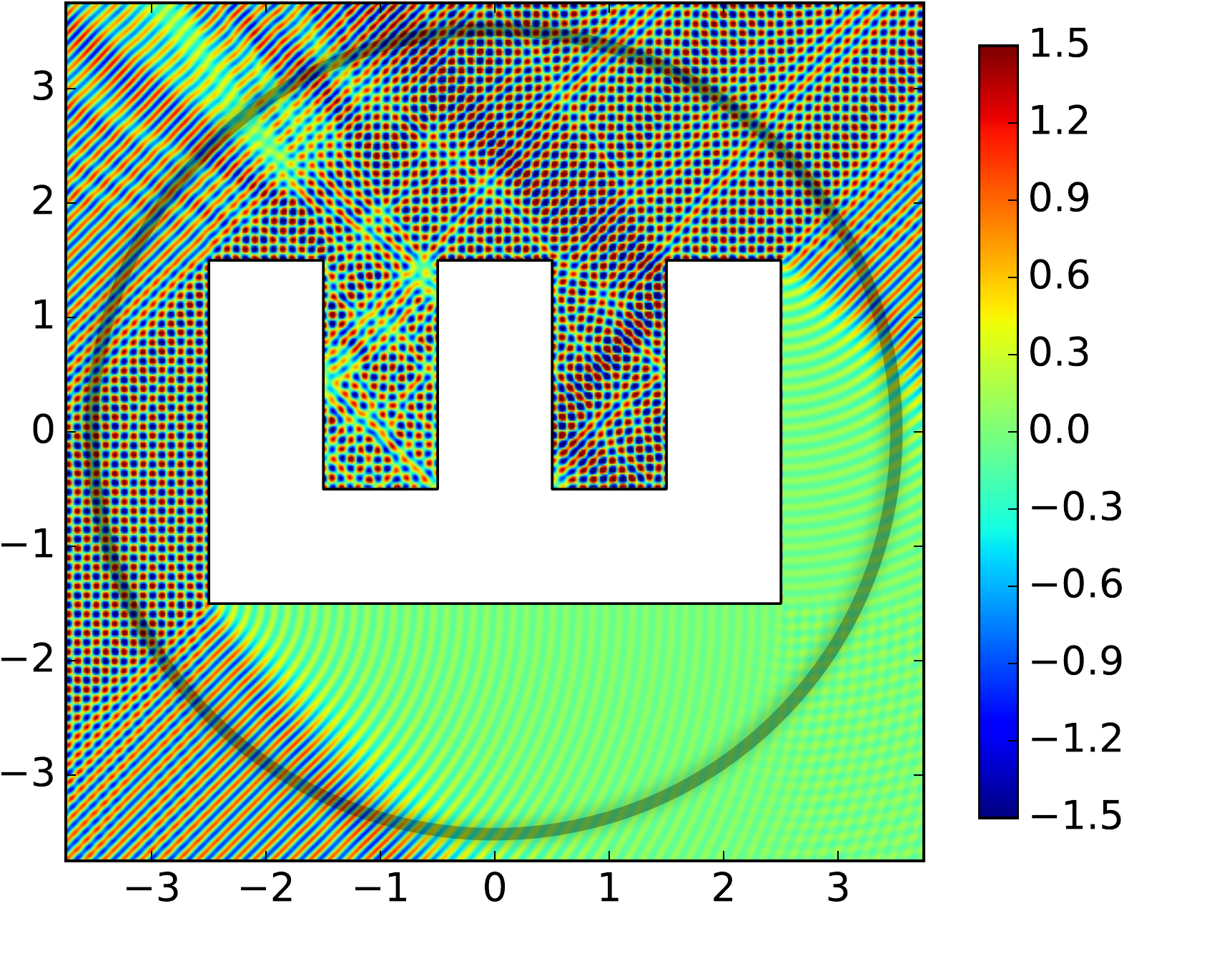}
    \caption{Total field for $k = 54.32 + i \times 10^{-5}$ along with
      testing curve for convergence of scattered field. The corners
      in this plot were rounded with a parameter $h=0.025$.}
  \end{subfigure}
  \hfill
  \begin{subtable}[b]{.45\linewidth}
    \centering
    \resizebox{\columnwidth}{!}{%
    \begin{tabular}{llll}\hline
      $h$ & $n$ & $RMSE$ error & Rel. $\ell_2$ error \\ \hline
      $0.4$ & $4608$ & $ 1.9 \times 10^{-1}$ & $ 2.5 \times 10^{-1}$ \\ 
      $0.2$ & $5312$ & $ 9.7 \times 10^{-2}$ & $ 1.3 \times 10^{-1}$ \\ 
      $0.1$ & $5824$ & $ 3.6 \times 10^{-2}$ & $ 4.7 \times 10^{-2}$ \\ 
      $0.05$ & $6752$ & $ 1.2 \times 10^{-2}$ & $ 1.6 \times 10^{-2}$ \\ 
      $0.025$ & $7296$ & $ 4.3 \times 10^{-3}$ & $ 5.5 \times 10^{-3}$ \\
      $0.0125$ & $7680$ & $ 1.6 \times 10^{-3}$ & $ 2.0 \times 10^{-3}$ \\
      $0.0$   & $14592$ & & \\
      \hline
    \end{tabular}  }
    \caption{Errors in the bi-static cross section for $k = 54.32 + i
      \times 10^{-5}$ in the near-field at a radius of $3.5 \approx
      30\lambda$.}
    \label{tab_neu_comb_near}
  \end{subtable}  
  \caption{Errors in the complex bi-static cross section (as compared
    to the true corner scattering problem) for the Neumann problem.
    The error converges approximately to first order in the rounding
    parameter $h$. In each case, the PDE was solved to roughly a
    precision of $10^{-9}$ in the $\mathcal L_\infty$ norm (as
    determined by testing against a known solution).}
  \label{fig_diffs_neumann}
\end{figure}
\clearpage
}

\afterpage{
  \begin{figure}[t]
    \centering
    \begin{subfigure}[t]{.45\linewidth}
      \centering
      \includegraphics[width=1\linewidth]{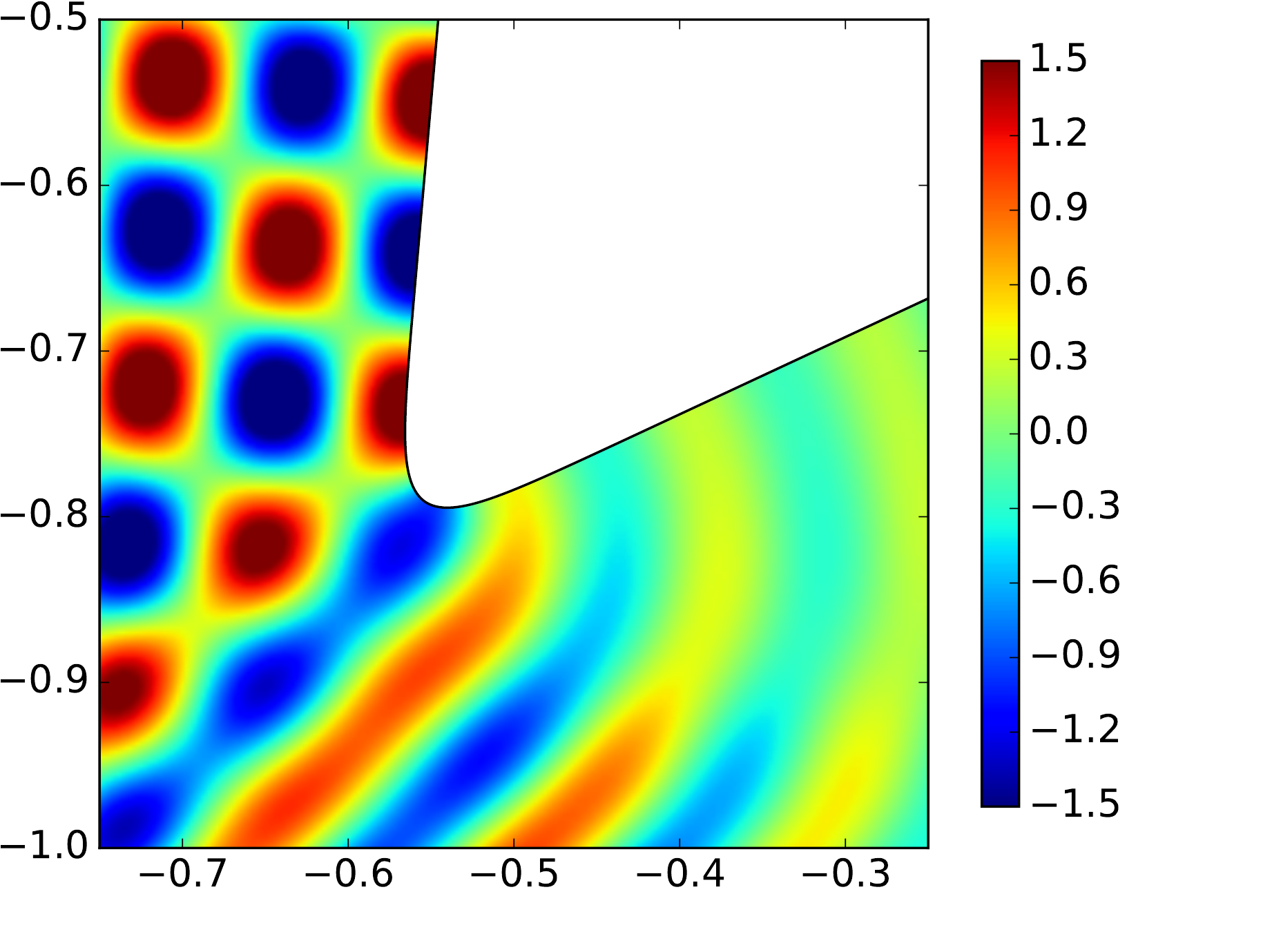}
      \caption{Rounding of $h = 0.4$, $k = 52.13 + i 10^{-7}$.}
    \end{subfigure} \hfill
    \begin{subfigure}[t]{.45\linewidth}
      \centering
      \includegraphics[width=1\linewidth]{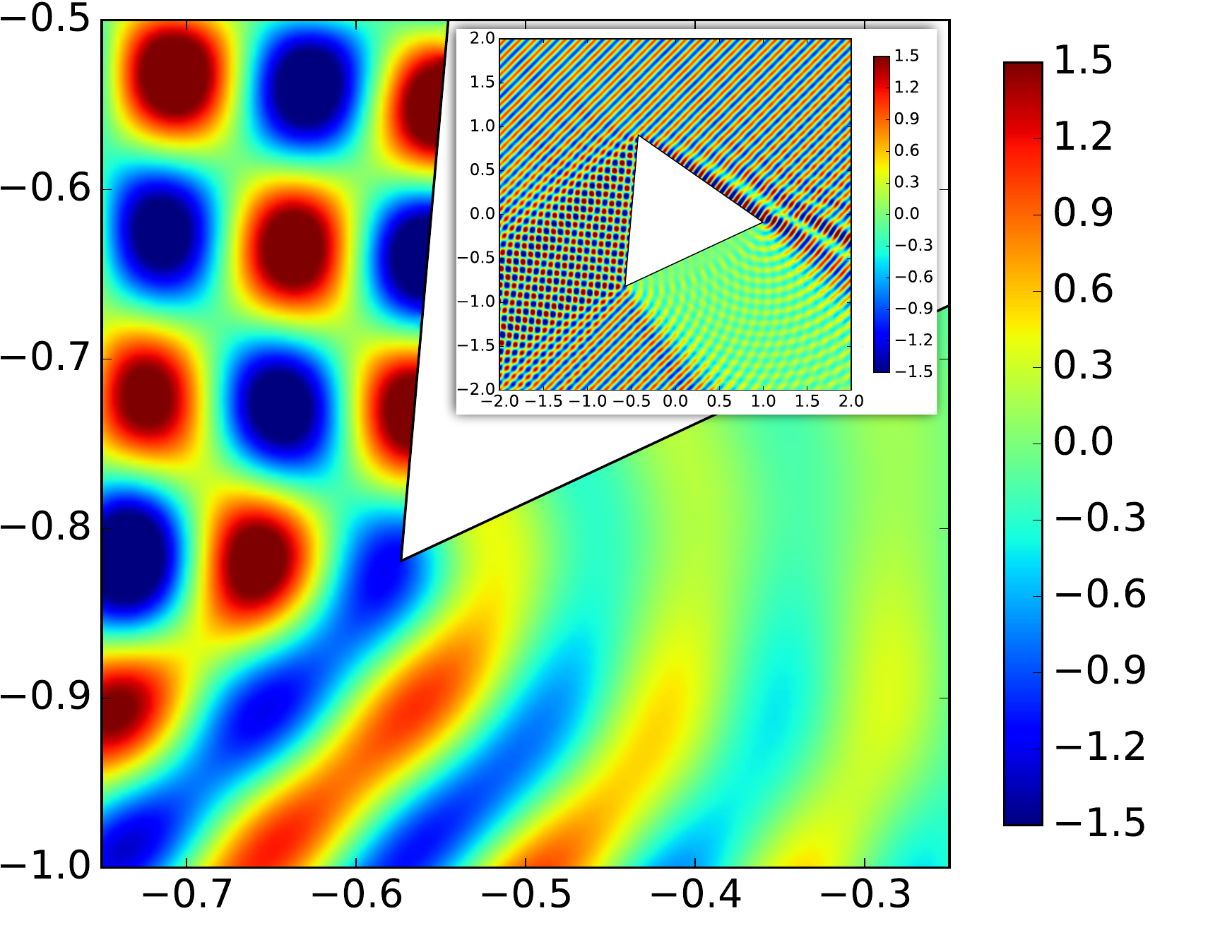}
      \caption{The corner problem, $k = 52.13 + i 10^{-7}$.}
      \label{fig_neu_triangle1}
    \end{subfigure} \\ \vspace{\baselineskip}
  \begin{subtable}[b]{.45\linewidth}
    \centering
    \resizebox{\columnwidth}{!}{%
    \begin{tabular}{llll}\hline
      $h$ & $n$ & $RMSE$ error & Rel. $\ell_2$ error \\ \hline
      $0.4$    & $1056$ & $ 8.0 \times 10^{-2}$ & $ 2.1 \times 10^{-1}$ \\ 
      $0.2$    & $1248$ & $ 4.2 \times 10^{-2}$ & $ 1.1  \times 10^{-1}$ \\ 
      $0.1$    & $1344$ & $ 2.1 \times 10^{-2}$ & $ 5.5 \times 10^{-2}$ \\ 
      $0.05$   & $1440$ & $ 9.8 \times 10^{-3}$ & $ 2.6 \times 10^{-2}$ \\ 
      $0.025$  & $1632$ & $ 4.4 \times 10^{-3}$ & $ 1.2 \times 10^{-2}$ \\
      $0.0125$ & $1728$ & $ 1.9 \times 10^{-3}$ & $ 5.1 \times 10^{-3}$ \\
      $0.0$    & $3456$ & & \\
      \hline
    \end{tabular}   }
    \caption{Errors in the bi-static cross section 
      at $r = 10$ for $k
      = 52.13 + i 10^{-7}$.}
    \label{tab_neu_triangle52}
  \end{subtable} \hfill
  \begin{subtable}[b]{.45\linewidth}
    \centering
    \resizebox{\columnwidth}{!}{%
    \begin{tabular}{llll}\hline
      $h$ & $n$ & $RMSE$ error & Rel. $\ell_2$ error \\ \hline
      $0.4$    & $1056$ & $ 2.4 \times 10^{-2}$ & $ 6.3 \times 10^{-2}$ \\ 
      $0.2$    & $1152$ & $ 1.0 \times 10^{-2}$ & $ 2.8 \times 10^{-2}$ \\ 
      $0.1$    & $1248$ & $ 4.5 \times 10^{-3}$ & $ 1.2 \times 10^{-2}$ \\ 
      $0.05$   & $1344$ & $ 2.0 \times 10^{-3}$ & $ 5.2 \times 10^{-3}$ \\ 
      $0.025$  & $1536$ & $ 8.6 \times 10^{-4}$ & $ 2.3 \times 10^{-3}$ \\
      $0.0125$ & $1632$ & $ 3.8 \times 10^{-4}$ & $ 9.9 \times 10^{-4}$ \\
      $0.0$    & $3360$ & & \\
      \hline
    \end{tabular}  }
    \caption{Errors in the bi-static cross section 
      at $r = 10$ for $k
      = 7.77 + i 10^{-6}$.}
    \label{tab_neu_triangle7}    
  \end{subtable}
  \caption{A depiction of sound-hard (Neumann) scattering for various
    roundings, along with convergence of the bi-static cross section
    in the moderate near-field.  In each case, the PDE was solved to
    roughly a precision of $10^{-9}$ in the $\mathcal L_\infty$ norm
    (as determined by testing against a known solution).}
  \label{fig_neu_triangle}
  \end{figure}

\begin{figure}[b]
  \centering
  \begin{subfigure}[t]{.45\linewidth}
    \centering
    \includegraphics[width=1\linewidth]{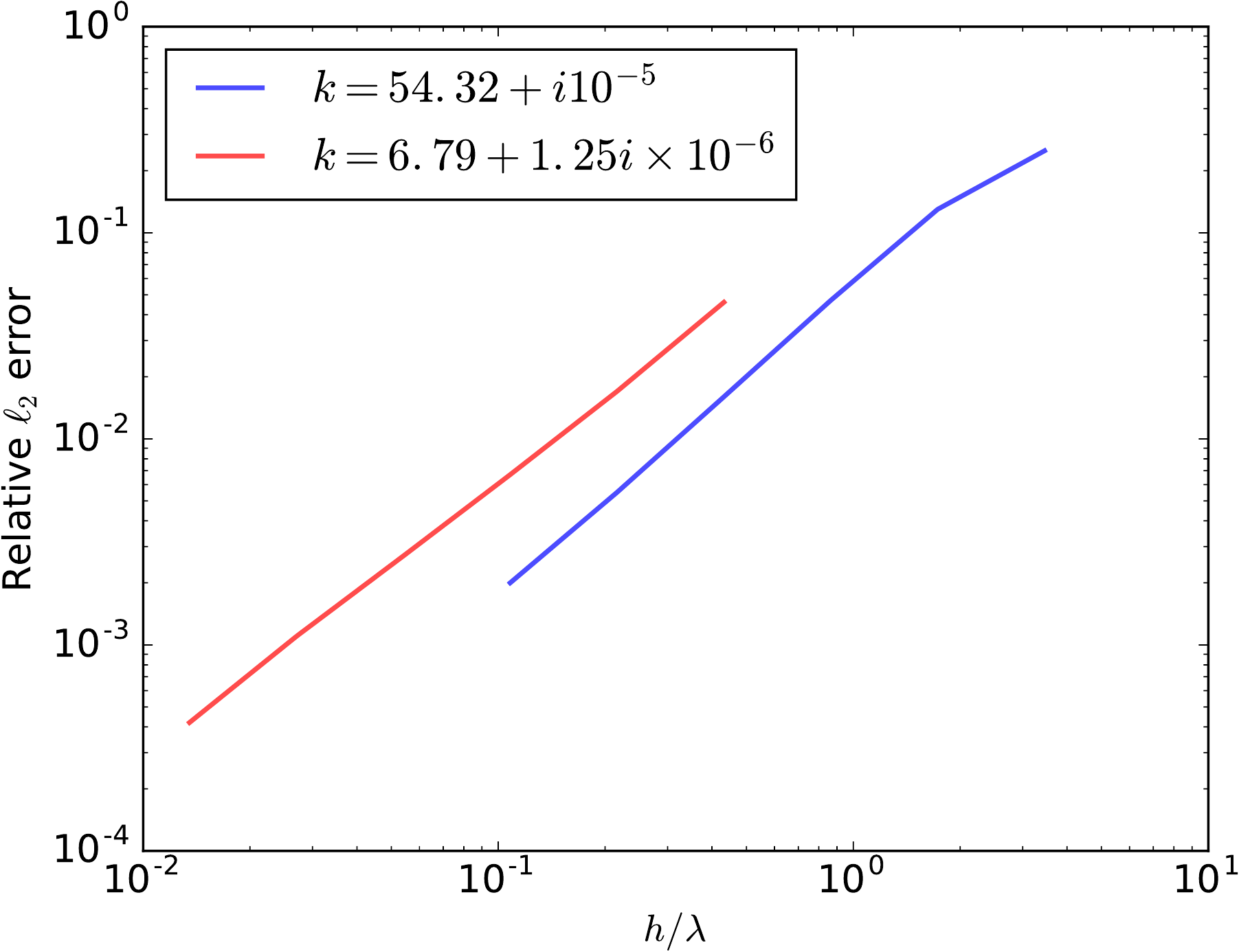}
    \caption{The empirical convergence for the comb shape with $k =
      54.32 + i \, 10^{-5}$ is $\mathcal O(h^{1.43})$ and that for $k
      = 6.79 + 1.25i \times 10^{-6}$ is $\mathcal O(h^{1.34})$.}
  \end{subfigure}
  \hfill
  \begin{subfigure}[t]{.45\linewidth}
    \centering
    \includegraphics[width=1\linewidth]{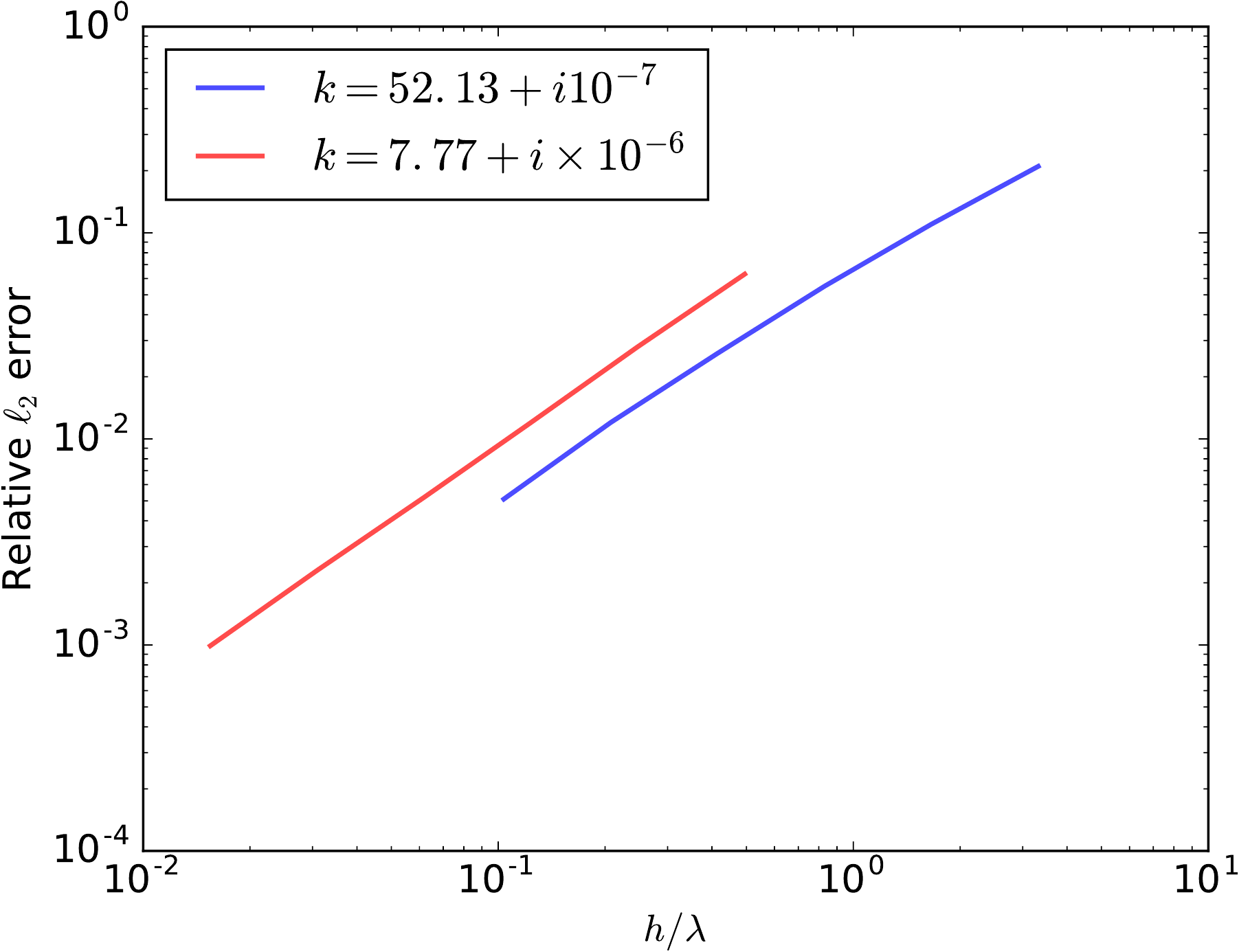}
    \caption{The empirical convergence for the triangle shape in
      Figure~\ref{fig_neu_triangle} with $k =
      52.13 + i \, 10^{-7}$ is $\mathcal O(h^{1.07})$ and that for $k
      = 7.77 + i\, 10^{-6}$ is $\mathcal O(h^{1.20})$.}
  \end{subfigure}
  \caption{Plot of the relative $\ell_2$ error of the scattered
    potential versus rounding size in terms of wavelength for the
    sound-hard (Neumann) problem. Both
    examples exhibit similar convergence.}
  \label{fig_neumann_regress}
\end{figure}
  \clearpage
}

\subsection{Extension to piecewise smooth boundaries} 

This technique can also be extended to piecewise smooth curvilinear
polygons. Since we need a variant of this idea to smooth polyhedra in
$\bbR^3,$ we pause to briefly describe it here. 
In short, in the neighborhood near a geometric singularity it is
possible to construct a diffeomorphism to a truncated cone. The corner
rounding can then be performed on the polygonal cone, and finally
composed with the inverse of the diffeomorphism to smooth the original
curvilinear polygon.

To this end, let $\cP$ be a region
in $\bbR$ whose boundary is composed of a finite collection of smoothly
embedded arcs, $\{\gamma_j:\: j=1,\dots, n\}$ meeting at points
\begin{equation}
  v_j=\gamma_j\cap\gamma_{j+1}
\end{equation}
and angles $\{0<\theta_j< 2\pi\}.$ We let $\gamma_{n+1}$ denote a
second copy of $\gamma_1.$ We are excluding the case of a cusp,
i.e. $\theta_j=2\pi.$

Once again the idea is to change only a small neighborhood each
vertex.  We define (in complex notation) the planar regions
\begin{equation}
\begin{aligned}
  W_j&=\{z: 0\leq\arg z\leq\theta_j\text{ and }|z|<1\}, &\qquad
  &\text{if }\theta_j<\pi,\\
  W_j&=\{z: 0\leq\arg z\leq 2\pi-\theta_j \text{ and }|z|<1\}, &
  &\text{if }\theta_j>\pi.
\end{aligned}
\end{equation}
Suppose that for each $j$ for which $\theta_j<\pi$ we can find a
diffeomorphism $\psi_j$ from $W_j$ to a neighborhood of $v_j$ in
$\cP,$ which carries:
\begin{equation}
  \begin{aligned}
    0 &\to v_j, \\
    \{z:\arg z=0\}\cap \partial W_j &\to \text{ a ray in } \gamma_j,\\
    \{z:\arg z=\theta_j\}\cap \partial W_j &\to \text{ a ray in } \gamma_{j+1}.
  \end{aligned}
\end{equation}
If $\theta_j>\pi,$ then $\psi_j$ is defined from a neighborhood of the
vertex in $W_j$ to a neighborhood of~$v_j$ in~$\overline{\cP^c},$ with
the boundary correspondence as before. Conformal mapping provides one
effective method to define such maps. Other, more elementary
techniques are also available. One such method, which works for
regions with convex boundaries, is described in Section~\ref{sect4}.

For each $h>0$ we define $W_j^h$ as the regions obtain by smoothing
the vertex of \mbox{$W_j$ at $0$} as described above.  For small $h,$ we have
$W_j^h\subset W_j,$ and the boundaries of $W_j^h$ and $W_j$ coincide
outside of a small neighborhood of $0.$ Thus, for small enough $r,$
the image $\psi_j(\partial W_j^h\setminus B_r(0))$ lies along the
boundary 
of~$\cP$ outside a small neighborhood of~$v_j.$ Hence the image
$\psi_j(\partial W_j^h)$ defines a smoothing of the vertex at $v_j.$ 
This procedure is done locally in a small neighborhood of each vertex,
allowing one to smooth the vertices while leaving as much of the
remainder of the boundary of $\cP$ fixed as desired.\\

\section{Polyhedra in three dimensions} \label{sec-3d}

In this section we describe several methods for smoothing
piecewise smooth boundaries of regions in $\bbR^3$. In
Section~\ref{3reg} we describe a special class of polyhedra,
\emph{3-regular Hamiltonian polyhedra}, whose boundaries can be smoothed
using the method described above {\em with a parameter}.  In fact, all
convex polyhedra, and many non-convex polyhedra can be smoothed this
way, but the results are often not-optimal.  In Section~\ref{ss4.2} we
show that by modifying a polyhedron in a small neighborhood of its
vertices one can obtain a 3-regular, Hamiltonian polyhedron. Hence it
can be smoothed using the method given in Section~\ref{3reg}.  This
leads to a smoothed boundary that agrees with the original polyhedron
outside a small neighborhood of the original edges and vertices.
Unfortunately, the smoothed polyhedron will also contain open subsets
of translated support planes of the vertices. This is both unsightly
and can produced a dramatically enhanced scattered wave in the
direction normal to the plane.  A more robust approach is described
in Section~\ref{sect3}.

\subsection{3-Regular Hamiltonian Polyhedra}\label{3reg}

There is a special collection of polyhedra in $\bbR^3$ whose edges and
vertices can be smoothed using only what might be called {\em the
two-dimensional method with parameter}. We first define this
class:
\begin{definition} Let $P$ be a polyhedron in $\bbR^3,$ and $G_P$ the
  graph defined by its edges. $P$ is 3-regular if every vertex is the
  intersection of three faces, or, equivalently, if $G_P$ is a 3-regular
  graph.  It is Hamiltonian if there is a finite collection of
  disjoint cycles $\{C_1,\dots,C_l\}\subset G_P$ so that every vertex
  belongs to exactly one of these cycles.
\end{definition}

It turns out that not every 3-regular polyhedron is Hamiltonian, and
when one is, the problem of finding these cycles is not generally
solvable in polynomial time. On the other hand, all 3-regular Platonic
solids (tetrahedron, cube, and dodecahedron) are Hamiltonian, as well
as many examples that arise in practice. As this class of polyhedra
can be smoothed by smoothing only edges, we take a moment to describe
the procedure.

Let $P$ be a 3-regular Hamiltonian polyhedron, with
$\cC=\{C_1,\dots,C_l\}$ a collection of disjoint cycles exhausting the
vertices. Let $\cE=\{e_1,\dots,e_m\}$ be the edges of $G_P$ that are
\emph{not} contained in any cycle. Because the graph is 3-regular, we
know that every vertex in $G_P$ lies on exactly one of these
edges. Moreover the edges in $\cE$ are disjoint.

To smooth $P$ we first smooth the corners that lie along the edges in
$\cE$ (using the two-dimensional method described earlier).  It is
easy to see that every edge $e_j$ lies in the intersection of two
planes $\{\pi_{j_1},\pi_{j_2}\}.$ Let $\pi_{j_3}$ be a plane
orthogonal to the line  $\ell_j=\pi_{j_1}\cap\pi_{j_2}$, and $\omega_j
=(\varphi,\theta) \in S^2$ be the direction of $\ell_j$. Finally let
$\gamma_j$ denote the component of
$(\pi_{j_1}\cup\pi_{j_2})\cap\pi_{j_3}$ so that a neighborhood of $e_j$
in $P$ lies inside the corner
\begin{equation}
  K_j=\{q+t\omega_j:\: q\in\gamma_j\text{ and }t\in\bbR\}.
\end{equation}
If $\gamma'_j$ is a smoothing of this curve, as defined in the
previous section, then
\begin{equation}
  K'_j=\{q+t\omega_j:\: q\in\gamma'_j\text{ and }t\in\bbR\}
\end{equation}
is a smoothing of the corner. If the rounding of $\gamma_j$ is done
close enough to the vertex, then we can smoothly replace a
neighborhood of $e_j$ in $P$ with its smoothed version in $K'_j$ by
simply intersecting the interior of the region bounded by $K'_j$ with
$P.$ Away from the smoothed edge, $K_j'$ is still a union of planar
regions which can be glued onto $P$, thereby replacing $e_j$ with a
smooth transition between these planar regions.

Since the edges in $\cE$ do not intersect, each of these smoothing
operations can be done independently of the others. Let $P'$ denote
the body in $\bbR^3$ obtained by smoothing all of these edges. Since
every vertex lies on one of the edges in $\cE$, the cycles on $P$ are
replaced by cycles $\cC'=\{C'_1,\dots,C'_l\}$ on $P'$ that are
\emph{smooth} non-intersecting curves. That is to say, every vertex
has been smoothed. The boundary of the body $P'$ is a comprised of
bounded smooth surfaces, which are mostly planar
regions. These surfaces are bounded by smooth, disjoint, closed
curves, along which these surfaces meet. All that remains is to smooth
these curves of intersection.

To that end we now define a diffeomorphism from a standard model onto
a neighborhood of $C_j'$. We smooth the standard model and use this
map to glue the result into $P'$. Let $c_j:[0,L_j]\to P'$ be an
arclength parameterization of $C_j'$. The unit vector field
$T_j(t)=\pa_t c_j$ is tangent to $C_j'$. Two smooth surfaces $S_{1j}$
and $S_{2j}$ meet, transversely, along this curve. Let
$N_{ij}(t)$ be the unit vector normal to $T_j(t)$ lying along
$S_{ij},$ $i=1,2$. Let $\pi_j(t)$ denote the plane through $c_j(t)$
spanned by $\{ N_{1j}(t),N_{2j}(t) \}$.

For $\epsilon>0$, let $U_{j\epsilon}$ denote the
$\epsilon$-neighborhood of $C_j'$. There is a radius $\epsilon>0$ so
that these planes $\{\pi_j(t):\: t\in [0,L_j]\}$ define a foliation of
$U_{j\epsilon}.$ This follows from the inverse function theorem and
the compactness of the curve. We define a map from
$V_{j\delta}=[0,L_j)\times (-\delta,\delta)\times(-\delta,\delta)$
  into a neighborhood of $C_j'$ by letting
\begin{equation}\label{eqn12}
  \Phi_j(t,s_1,s_2)=c_j(t)+s_1N_{1j}(t)+s_2N_{2j}(t).
\end{equation}
The differential of $\Phi_j$ at $(t,0,0)$ is given by
\begin{equation}
  d\Phi_j(t,0,0)=T_j(t)dt+N_{1j}(t)ds_1+N_{2j}(t)ds_2,
\end{equation}
which is clearly of rank three. From the inverse function theorem it
now follows easily that there is an $\delta>0$ so that
$\Phi_j\restrictedto_{V_{j\delta}}$ is a diffeomorphism onto its
image, which is a neighborhood of $C'_{j}$ foliated by the planes
$\{\pi_j(t)\}.$ We can continue $\Phi_j$ as a smooth $L_{j}$-periodic
function.

We first make the assumption that $P'\cap U_{j\epsilon}$ lies in the
image of the positive orthant in the $(s_1,s_2)$-variables under this
map. This is certainly the case if the interior angle along $C_j'$ is
everywhere less than $\pi$. Under this assumption it is easy to see
that for an $\eta>0$ there is a set of the form $V_{j\eta}=\bbR\times
[0,\eta]\times [0,\eta]$ on which $\Phi_j$ is a
{\em periodic}-diffeomorphism. Moreover $\Phi_j(V_{j\eta})\subset P'$
exhausts a neighborhood of $C'_j$ in $P'.$

We now smooth the corner $V_{j\eta}$ to obtain $V'_{j\eta}$, where the
smoothed edge lies in $V_{j\frac{\eta}{2}}.$ The image of the smoothed
corner under $\Phi_{j}$ defines a smoothing of $C'_{j}$. As these
curves are disjoint, each one can be smoothed independently. We let
$P''$ denote the resulting body in $\bbR^3.$ It is a smoothed
approximation to $P.$ If the interior angle is greater than $\pi$ at
every point, then we can smooth the corner by smoothing the exterior,
which satisfies the hypotheses above.

We demonstrate this approach to smoothing polyhedra by smoothing a
cubic torus $P$.  We suppose that $P$ is oriented parallel to the
standard coordinate axes. There are four cycles $\{C_1, \ldots,
C_4\}$, each containing four edges and parallel to the $xz$-plane.
These cycles bound the faces that have non-trivial topology.  For the
edges not belonging to cycles, $\cE$, we use the eight edges parallel
to the $y$-axis.  If the edges in $\cE$ are smoothed, then the cross
sections of $P'$ perpendicular to the $y$-axis are smoothed squares,
as shown in Figure~\ref{fig_torus}.

\begin{figure}[t]
  \centering
  \begin{subfigure}[t]{.45\linewidth}
    \centering
    \includegraphics[width=.95\linewidth]{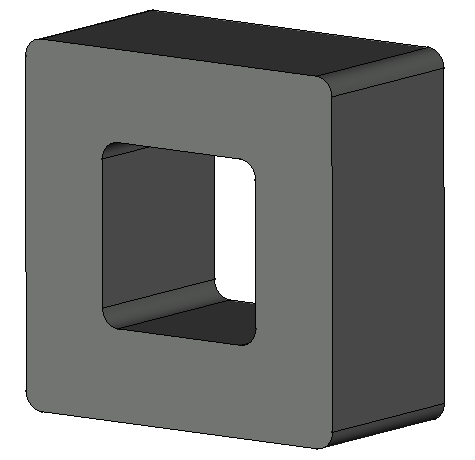}
    \caption{Rounded edges and cycles of the cubic torus.}
  \end{subfigure}
  \quad
  \begin{subfigure}[t]{.45\linewidth}
    \centering
    \includegraphics[width=.95\linewidth]{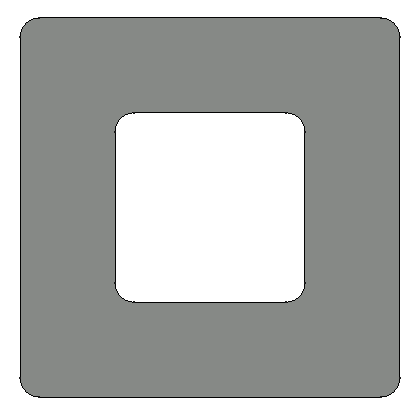}
    \caption{Rounded edges/cycles of a face.}
  \end{subfigure}
  \caption{Rounding Hamiltonian cycles of a cubic torus. Images were
    constructed merely for illustrative purposes only using FreeCAD.}
  \label{fig_torus}
\end{figure}

We now smooth the remaining edges using the representation in
equation~\eqref{eqn12} along with the smoothing of the right angle
used to smooth the edges in $\cE$. Figure~\ref{fig_torus2} shows two
views of the upper part of the final smoothed cubic torus.  We should
point out that these images were constructed using the software
FreeCAD using low-order fillet procedures, and are illustrative
only. Constructing the high-order computational geometry software to
carry out convolutional smoothing and subsequent high-order piecewise
triangulation for polyhedra in three
dimensions is an ongoing project.

\subsection{Smoothing the Vertices: I}
\label{ss4.2}

The method for smoothing edges described in the previous section can
be used to smooth an arbitrary convex polyhedron in a two-step
procedure. Let $\cP$ be a convex polyhedron with faces
$\cF=\{f_1,\dots,f_l\}$, edges $\cE=\{e_1,\dots,e_m\}$, and vertices
$\cV=\{v_1,\dots,v_n\}$.  At each vertex we choose an outward pointing
support vector, $\{\bnu_1,\dots,\bnu_n\}$. Suppose that the edges at
the $j^\text{th}$ vertex join to the vertices $\{v_{k_1},\dots,v_{k_p}\}.$ A
good choice for $\bnu_j$ is to take
\begin{equation}
  \bnu_j=\frac{1}{p}\sum_{q=1}^p\frac{v_j-v_{k_q}}{\|v_j-v_{k_q}\|},
\end{equation}
as it will preserve whatever symmetries the original polyhedron
possesses in the smoothed domain.

Given $\epsilon>0,$ we define a neighborhood $V_{\epsilon}$ of the
vertices by the condition
\begin{equation}
    X\in V_{\epsilon}\text{ if for some } j\text{ we have } \langle
    X-v_j,\bnu_j\rangle>-\epsilon.
\end{equation}
Note that $\cP\subset V_0^c,$ and, for small $\epsilon>0$, the
intersection $\partial V_{\epsilon}\cap \cP$ is a disjoint union of small
polygons lying near the vertices. See Figure~\ref{fig206}. It is easy to see
that the resultant polyhedron is 3-regular and Hamiltonian, with the disjoint cycles
being those introduced when cutting off the vertices.

To smooth the polyhedron we first smooth the edges. An edge $e_k$ lies
in the intersection of two faces $f_{i_k} \cap f'_{i_k}.$ Let $\pi_k$
be the plane, through the midpoint of the edge, which is perpendicular
to $e_k.$ Using the method described in Section~\ref{sect2d} we can
smooth the vertex $e_k\cap\pi_k$ of the polygon defined by the
intersection of $\pi_k$ with $\cP.$ By parallel translating this
smoothed vertex along the edge, we can replace a neighborhood of the
edge $e_k$ by a smooth surface joining the plane containing $f_{i_k}$
to the plane containing $f'_{i_k}.$ With $h>0$ the smoothing parameter
from Section~\ref{sect2d}, we let $\cP_h$ denote the polygon with all
its edges smoothed in this manner.

Of course, near enough to a vertex, the smoothings of different edges
intersect, but given $\epsilon>0$ we can choose a sufficiently small
$h>0$ so that, in the set $V_{\epsilon}^c,$ the modifications
corresponding to the different edges are disjoint. With such choices,
the intersection $\partial V_{\epsilon}\cap\cP_h$ is a \emph{disjoint}
union of polygons with smoothed vertices lying in the planes
\begin{equation}\label{eqn18.01}
  \langle X-v_j,\bnu_j\rangle=-\epsilon.
\end{equation}

\begin{figure}[t]
  \centering
  \begin{subfigure}[t]{.3\linewidth}
    \centering
    \includegraphics[width=.99\linewidth]{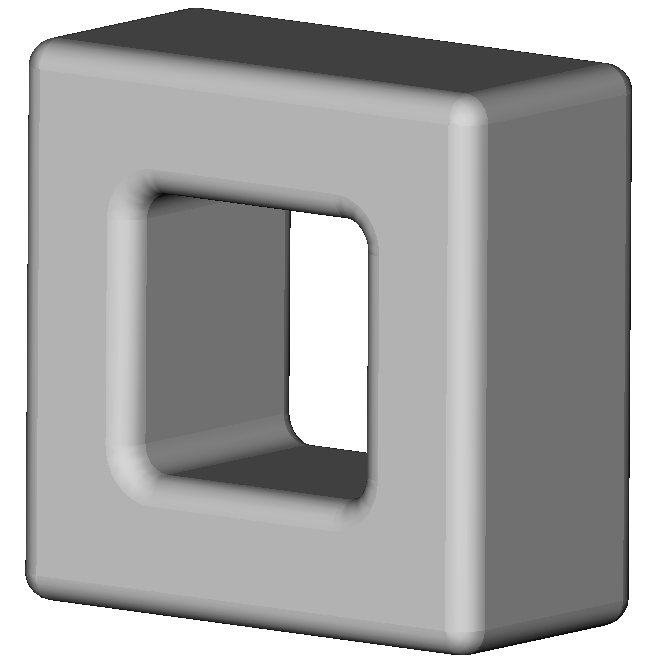}
    \caption{Final smooth surface.}
  \end{subfigure}
  \quad
  \begin{subfigure}[t]{.3\linewidth}
    \centering
    \includegraphics[width=.99\linewidth]{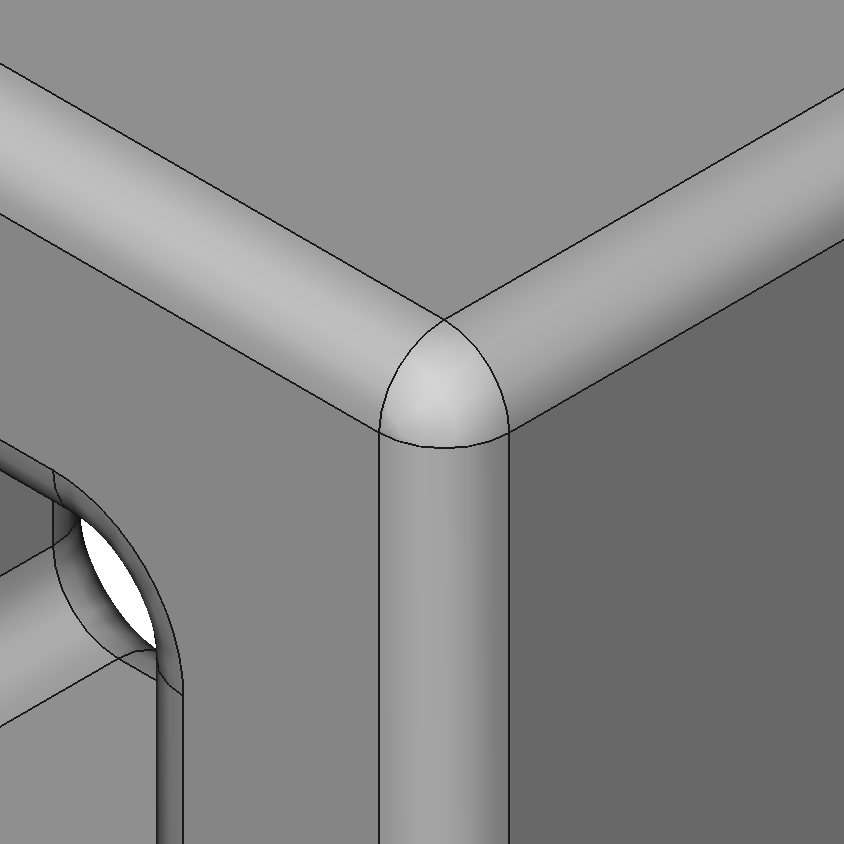}
    \caption{Smoothed exterior corner.}
  \end{subfigure}
  \quad
  \begin{subfigure}[t]{.3\linewidth}
    \centering
    \includegraphics[width=.99\linewidth]{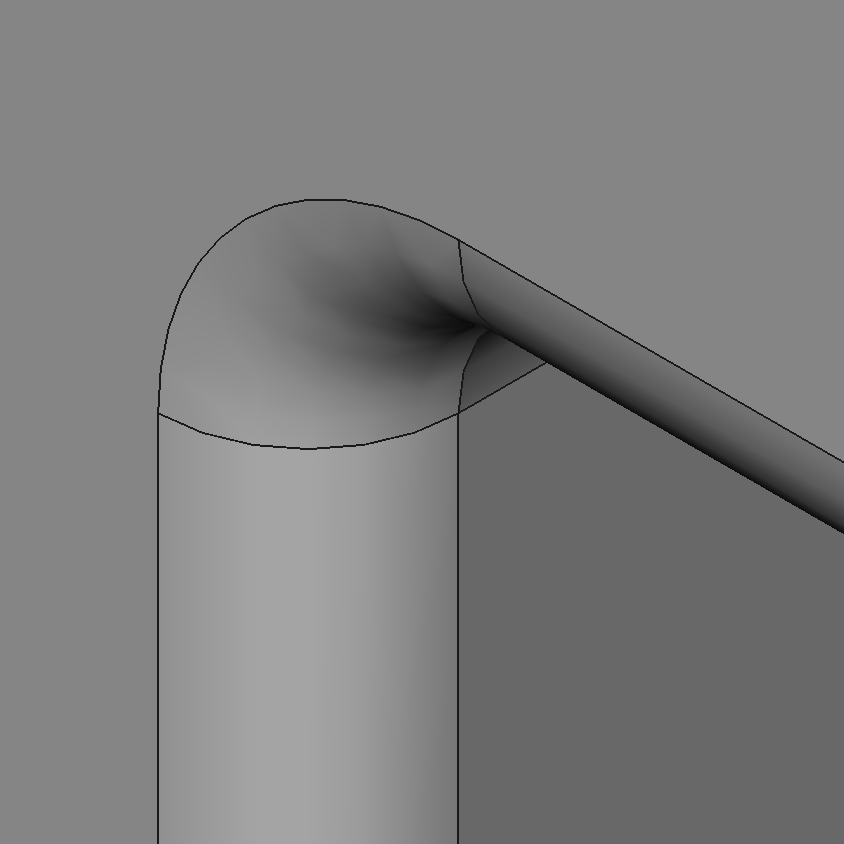}
    \caption{Smoothed interior corner.}
  \end{subfigure}
  \caption{Final rounding of Hamiltonian cycles of a cubic
    torus. Images were constructed merely for illustrative purposes
    only using FreeCAD.}
  \label{fig_torus2}
\end{figure}

Using the technique described in Section~\ref{3reg} the edges along
which these smoothed polygons meet $\partial\cP_h$ can be smoothed,
leading to an overall smoothing of the original polyhedron. While it is
clear that this can be done in an arbitrarily small neighborhood of
the singular locus of $\partial\cP,$ the $j^\text{th}$ vertex is
replaced by a smooth surface containing a open subset of the plane
defined in~\eqref{eqn18.01}. For applications to scattering theory
this might not be desirable, as it will produce a considerable
amplification of the scattered signal in this direction. In the next
section we describe a method for smoothing vertices that produces a
better result.

\begin{figure}[t]
  \centering
  \begin{subfigure}[t]{.45\linewidth}
    \centering
    \includegraphics[width=.95\linewidth]{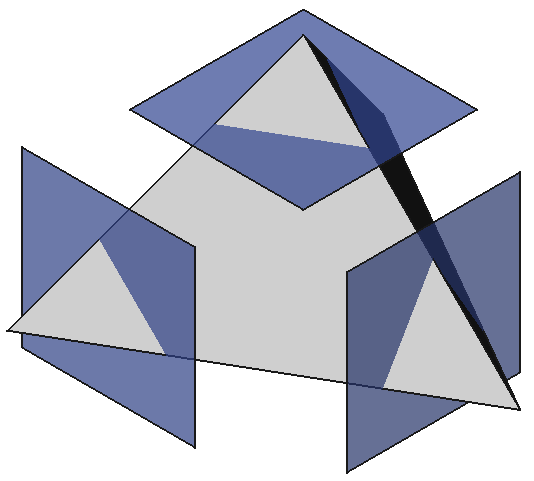}
    \caption{The intersections of $\cP$ with planes \\
      $\langle
      X-v_j,\bnu_j\rangle=-\epsilon$.}
    \label{fig206}
  \end{subfigure}
  \quad
  \begin{subfigure}[t]{.45\linewidth}
    \centering
    \includegraphics[width=.75\linewidth]{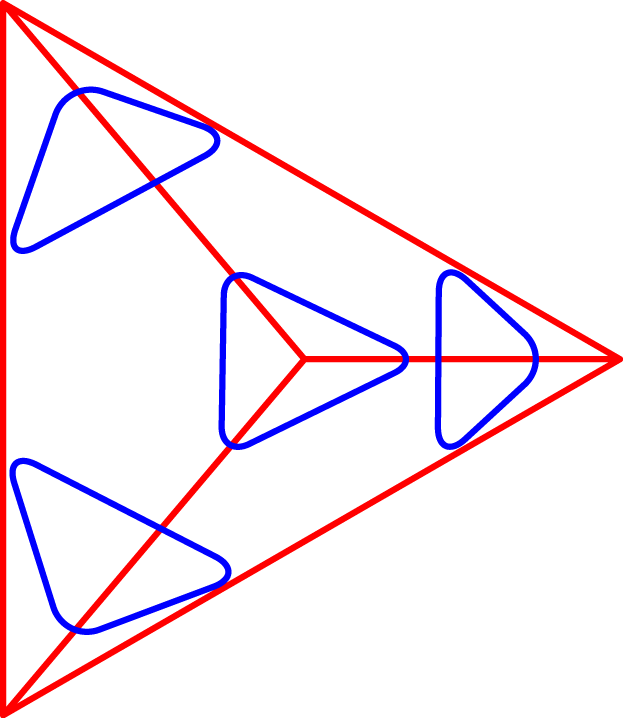}
    \caption{View along the $z$-axis of the intersections of the
      smoothed edges with planes \\$\langle
      X-v_j,\bnu_j\rangle=-\epsilon$.}
    \label{fig203}
  \end{subfigure}
  \caption{Corner rounding of a tetrahedron. Images are for
    illustrative purposes only, constructed using FreeCAD.}
\end{figure}

\subsection{Smoothing the Vertices, II}
\label{sect3}

The second method for smoothing vertices takes as its starting point
the domain $\cP_h$ constructed in the previous sections by smoothing
the edges (as depicted in Figure~\ref{fig203}). We assume that there
is a positive $\epsilon_0$ so that the intersections
\begin{equation}
 P^{h,j}_{\epsilon_0}= \cP_h\cap\{X: \langle X-v_j,\bnu_j\rangle=-\epsilon_0\}
\end{equation}
are disjoint smoothed polygons. With this assumption each vertex can
be smoothed without reference to any other vertex. We can therefore
fix a $j$ and describe the method for smoothing $\cP$ in a
neighborhood of $v_j.$

We let $0<\epsilon_j<\epsilon_0$ denote the infimum of the numbers so
that $P^{h,j}_{\epsilon}$ is a polygon with smoothed vertices. The
domain $P^{h,j}_{\epsilon_j}$ is a smoothed polygon, where the
smoothings of two (or more) of the edges meet without any flat
segment between them.

For each $\epsilon>\epsilon_j,$ we let $\Phi_{\epsilon}$ denote a
maximally smooth parameterization of $\partial P^{h,j}_{\epsilon}$ on
the unit circle. That is, $\Phi_{\epsilon}$ is a map from $S^1$ to 
$\partial P^{h,j}_{\epsilon}$. Therefore, we can represent it in terms of
a Fourier expansion:
\begin{equation}
  \Phi_{\epsilon}(\theta)=\sum_{n=-\infty}^{\infty}X_{\epsilon,n}
  e^{in\theta}-\epsilon\bnu_j .
\end{equation}
The infinite sum defines a map from the unit circle to $\partial
P^{h,j}_{\epsilon}$ translated to the plane $\langle
X,\bnu_j\rangle=0.$ We adjust $\epsilon_0$ so that
$\epsilon_0>4\epsilon_j$ for all $j.$

For each $\epsilon_j\leq\epsilon\leq \epsilon_0$ we can extend this
map as a diffeomorphism from the unit disk to the smoothed polygon
$P^{h,j}_{\epsilon}.$ For example, since the boundary of
$P^{h,j}_{\epsilon}$ is convex, it follows from a theorem of Choquet
that the harmonic extension has the desired properties:
\begin{equation}
  \tPhi_{\epsilon}(r,\theta)=\sum_{n=-\infty}^{\infty}X_{\epsilon,n}
  e^{in\theta}r^{|n|}-\epsilon\bnu_j.
\end{equation}
For additional details, 
see the next section and~\cite{Choquet1945}. The image of
$\tPhi_{\epsilon}$ lies in the 
plane~\mbox{$\langle X-v_j,\bnu_j\rangle= -\epsilon$}.

To use these maps to define a smoothing we need to choose two
auxiliary functions.  First we choose a number $\eta_1$ so that
$2\epsilon_j <\eta_1< \epsilon_0$. Next, choose a smooth, convex,
increasing function $\chi(s)$ defined in $[0,\epsilon_0]$ so that
for $s>\eta_1$, $\chi(s)=s$, $\chi(0)=\epsilon_j$,  and
\begin{equation}
  \chi^{[m]}(0)=0 \qquad \text{for } m=1,\dots,k.
\end{equation}
We also choose positive numbers $r_0<\epsilon_0,$ $\eta_2<r_0/2,$ and
an even convex function $\psi(r)$ defined in a neighborhood of $0.$ We
require
\begin{equation}
  \begin{aligned}
  \psi(r) &= r &\qquad &\text{for } r>r_0,\\
  \psi(0)&=\eta_2, & & \\
  \psi^{[m]}(0)&=0  & &\text{for } m=1,\dots,k.
  \end{aligned}
\end{equation}

The smoothing of the neighborhood is defined as the image of
$[0,\epsilon_0]\times S^1$ under the map
\begin{equation}\label{eqn30}
  \Psi(r,\theta): (r,\theta)\mapsto
  \tPhi_{\chi(r)}\left(\frac{r}{\psi(r)},\theta\right)-\chi(r)\bnu_j.
\end{equation}
For $r>\max\{\eta_1, r_0\}$ this map simplifies to
\begin{equation}
  (r,\theta)\mapsto \tPhi_{r}\left(1,\theta\right)-r\bnu_j.
\end{equation}
That is to say, its image lies in the already smoothed part of
$\partial \cP_h$ near to $v_j.$ Our assumptions assure, that as
function of $x=r\cos\theta$ and $y=r\sin\theta$, the map
$(x,y)\mapsto\Psi(x,y)$ is at least $m-1$ times differentiable in a
neighborhood of $(0,0)$ and $d\Psi(0,0)$ has rank $2$. Therefore, the
image of $D_{\epsilon_0}(0)$ under $\Psi$ is a smooth sub-manifold of
$\bbR^3$. The image lies in the set
\begin{equation}
  \langle X-v_j,\bnu_j\rangle\leq -\epsilon_j,
\end{equation}
where the vector $\bnu_j$ is the normal vector to the smoothed vertex
at the point $\Psi(0,0)$.

In describing this method for smoothing vertices, we have assumed that
the original polyhedron is convex, but this is not necessary for the
method to be applicable. It is merely required that each vertex $v_j$
has a local strict supporting plane. This means that there is a vector
$\bnu_j$ so that if $\langle v_j,\bnu_j\rangle=c_j$, then for some
$r>0$,
\begin{equation}
  \cP\cap B_{r}(v_j)\setminus v_j\subset \{X:\langle
X,\bnu_j\rangle< c_j\}.
\end{equation}
Here $B_r(v)=\{X\in\bbR^3: |X-v|<r\}.$ The existence of a strict local
support plane implies that for a range of $\epsilon>0$ the sets
\begin{equation}
 P^{j}_{\epsilon}= \cP\cap\{X: \langle X-v_j,\bnu_j\rangle=-\epsilon\}
\end{equation}
are polygons. With this assumption we proceed as before, first
smoothing the edges to produce $\cP_h.$ For some
$0<\epsilon_0<\epsilon<\epsilon_1$ the sets
\begin{equation}
 P^{h,j}_{\epsilon}= \cP_h\cap\{X: \langle X-v_j,\bnu_j\rangle=-\epsilon\}
\end{equation}
are smoothed polygons. The method described above can easily be
adapted to smooth the vertex in this case as well.  The results of
using this technique to smooth polyhedra are shown in
Figure~\ref{fig201}.

\begin{figure}[t]
  \centering
  \begin{subfigure}[t]{.3\linewidth}
    \centering
    \includegraphics[width=1\linewidth]{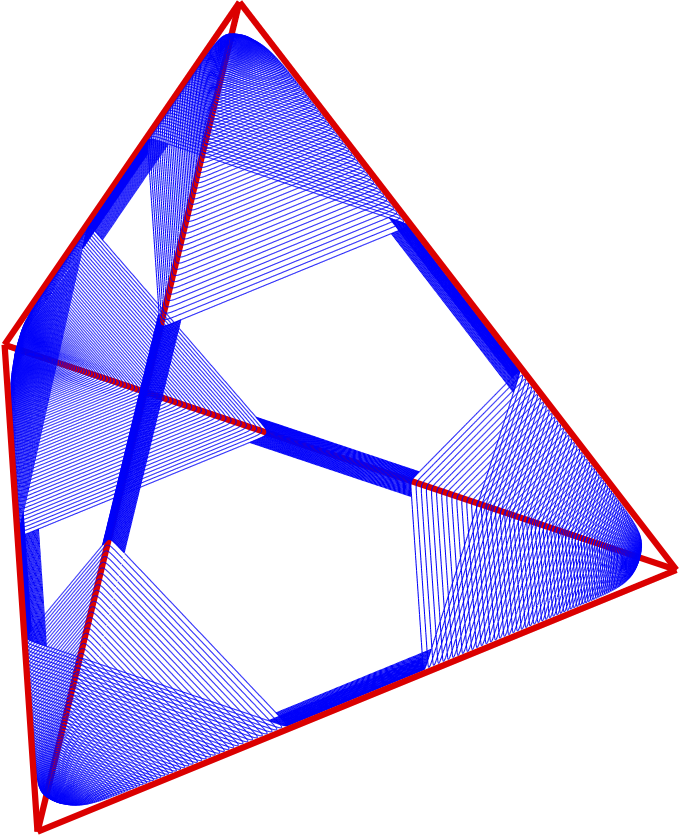}
    \caption{A smoothed tetrahedron.}
  \end{subfigure}
  \hfill
  \begin{subfigure}[t]{.3\linewidth}
    \centering
    \includegraphics[width=1\linewidth]{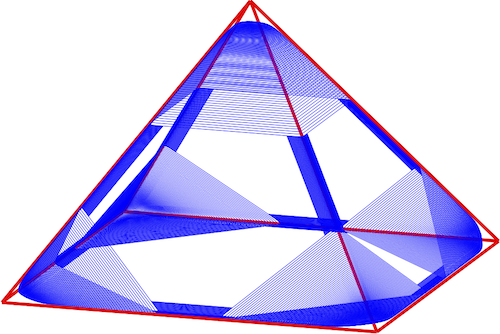}
    \caption{A smoothed pyramid.}
  \end{subfigure}
  \hfill
  \begin{subfigure}[t]{.3\linewidth}
    \centering
    \includegraphics[width=1\linewidth]{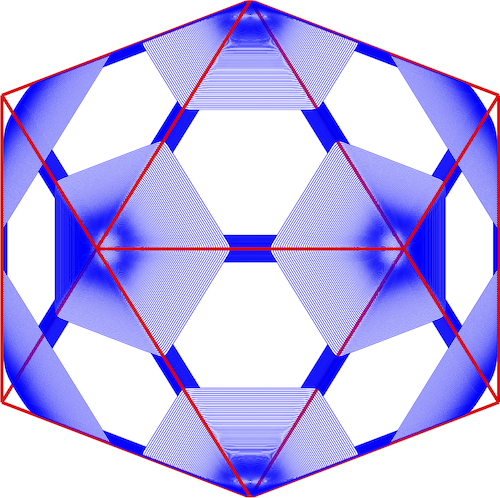}
    \caption{A smooth icosahedron.}
  \end{subfigure}
  \caption{Several smoothed polyhedra.}
  \label{fig201}
\end{figure}

\section{General Polyhedra} \label{sec-general}

Using this general scheme of first smoothing the edges, and then using
diffeomorphisms to smooth the vertices we now describe a method that
suffices to smooth arbitrary globally embedded polyhedra in $\bbR^3.$
Let $\cP$ be a polyhedron, by which we mean a bounded region in
$\bbR^3,$ whose boundary is a union of polygons lying in planes. We
let $\{v_j\}$ denote the vertices of $\cP.$ If the polyhedron has a
strict local support plane at every vertex, then the method describe
in Section~\ref{sect3} can be applied to produce a locally smoothed
polyhedron, by first smoothing the edges and then the vertices.  There
are polyhedra that do not have strict local support planes at every
vertex, e.g. the cubic torus does not have support planes at the {\em
  inner vertices}.

The method of Section~\ref{sect3} requires that near to the vertex
$v_j,$ the polyhedron is the cone over an intersection with a plane of
the form
\begin{equation}
  \cP\cap\{X: \langle X-v_j,\bnu_j\rangle=-\epsilon\}.
\end{equation}
Let $S_r(v)$ denote the sphere of radius $r$ centered at $v.$ A
slightly more complicated method results if we instead assume that for
each $j$ there is an $r_j>0$ so that
\begin{enumerate}
\item $\cP\cap S_{r_j}(v_j)$ is a connected region $R_j$ on
  $S_{r_j}(v_j)$ bounded by a simple closed curve, $\gamma_j$,
\item $\cP\cap B_{r_j}(v_j)$ is the cone over $R_j$ with vertex $v_j$.
\end{enumerate}
The curve $\gamma_j$ is a piecewise geodesic polygon on the sphere. A
polyhedron satisfying these conditions is globally embedded. In
general the region $R_j$ could have several connected components, a
case that we do not consider further.

For sufficiently small $h>0$, we let $\cP_h$ denote the result of
smoothing the edges of $\cP$ as described in Section~\ref{ss4.2}. If
$\cP$ is globally embedded, then, at each vertex there is a range of
radii $\rho_{0j}(h)<r<\rho_{1j}(h)$ so that the intersections
\begin{equation}
  \cP_h\cap S_{r}(v_j)
\end{equation}
are regions $R_j(r,h)$ bounded by simple closed curves,
$\gamma_j(r,h)$, that are smoothings of the curves $\cP\cap
S_{r}(v_j).$ As $h\to 0,$ it is clear that $\rho_{0j}(h)$ tends to $0$
and $\liminf_{h\to 0}\rho_{1j}(h)\geq r_j.$

For each $\rho_{0j}(h)<r<\rho_{1j}(h)$ we let
\begin{equation}
  \Phi_r:D_1(0)\longrightarrow R_j(r,h)
\end{equation}
be a diffeomorphism from the unit disk onto the region $R_j(r,h).$ The
maps $\{\Phi_r\}$ can, for example, be defined as the conformal maps
from $D_1(0)$ to the spherical domain $R_j(r,h),$ normalized so
that~$0$ is mapped to points lying on a carefully selected curve.
Using these maps we can define an analogue of the map
$\Psi(r,\theta),$ defined in~\eqref{eqn30}, so that the image of
$[0,\epsilon_0]\times S^1$ under this map is a smoothed version of a
neighborhood of the vertex $v_j,$ which is joined smoothly to $\cP_h.$
We leave the detailed construction of these maps to the ambitious
reader.

There are also approaches to smoothing that first smooth the vertices,
using the methods described above, and then interpolate these
smoothings along the edges. It is very difficult to preserve convexity
using this order of operations. That is why we have only described
methods that first smooth the edges, and then the vertices, using a
slicing approach along with families of diffeomorphisms.

This completes the description of our algorithms for smoothing 
polyhedra in $\bbR^3.$ Note that one can restrict the modifications
of the original polyhedron to lie in an arbitrarily specified
neighborhood of the 1-skeleton of the boundary of $P.$ One also
retains considerable control on the relationship between the Gauss map
of the smoothed polyhedron and that of the original, which is crucial
for the behavior of scattered waves. In the final section we provide
several practical methods for constructing diffeomorphisms.

\section{Methods to construct diffeomorphisms}\label{sect4}
We now describe several methods to define extensions of a map from
$S^1$ to $\Gamma,$ a Jordan curve in the plane, which are
diffeomorphisms from the unit disk $D_1(0)$ to the region
$D_{\Gamma}$ which is bounded by~$\Gamma$.

\subsection{Method 1}
Conformal mapping provides a method that can be computationally
expensive and numerically ill-conditioned (depending on the geometry)
\cite{kythe}, but guaranteed to work in considerable generality. In
particular $D_{\Gamma}$ can be a simply connected region in either a
plane or a round sphere.  Suppose that $f:D_1(0)\to D_{\Gamma}$ is a
conformal diffeomorphism. If $\Gamma$ is convex, then
\begin{equation}
  \Gamma_r=\{f(re^{i\theta}):\: \theta\in [0,2\pi]\}
\end{equation}
is a convex curve for every $r\in (0,1]$. If $\Gamma$ is star shaped
with respect to $0$ and $f$ is normalized so that $f(0)=0$ then the
curves $\{\Gamma_r:r\in (0,1]\}$ are star shaped. These results can be
  found in~\cite{pommerenke}.

\subsection{Method 2}

There is a simple method that is guaranteed to give a diffeomorphism
if $D_{\Gamma}$ lies in a plane and the initial curve $\Gamma$ is
convex. A theorem of T. Rado, Kneser, and G. Choquet states that if
$(u,v)$ defines a homeomorphism from the unit circle to $\Gamma$ which
bounds a convex region $D_{\Gamma}$, then the harmonic extension of
the coordinate functions $(U,V)$ defines a diffeomorphism from the
interior of $D_1$ to~$D_{\Gamma}$~\cite{Choquet1945}. This
theorem does not require $\Gamma$ to be strictly convex or smooth.

If the boundary map is given in terms of the Fourier series
\begin{equation}
  \theta\mapsto
  \left(\sum_{j=-\infty}^{\infty}a_j \, e^{ij\theta}, \,
  \sum_{j=-\infty}^{\infty}b_j \, e^{ij\theta}\right)
\end{equation}
it follows from Choquet's theorem that
\begin{equation}
  \Phi(\theta,r)=
  \left(\sum_{j=-\infty}^{\infty}a_j \, r^{|j|} e^{ij\theta}, \,
  \sum_{j=-\infty}^{\infty}b_j \, r^{|j|} e^{ij\theta}\right)
\end{equation}
defines a diffeomorphism from $D_1(0)$ onto $D_{\Gamma}.$

\subsection{Method 3}
If we specify a convex curve $\Gamma$ in terms of its Gauss map, that
is, as the image
\begin{equation}\label{eqn28}
  G(\theta)=g(\theta)\, \left(\cos\theta,\sin\theta\right)
  +g'(\theta)\, \left( -\sin\theta,\cos\theta \right),
\end{equation}
then we can proceed as above to get a diffeomorphism. If $g$ has the
Fourier representation
\begin{equation}\label{eqn29}
  g(\theta)=\sum_{n=-\infty}^{\infty}\beta_n \, e^{in\theta},
\end{equation}
then we can again apply Choquet's theorem to construct a harmonic
extension, which is guaranteed to give a diffeomorphism. The map
defined in~\eqref{eqn28} can be represented as
\begin{equation}
  e^{i\theta}\mapsto (g(\theta)+ig'(\theta))e^{i\theta}.
\end{equation}
Using the Fourier representation in~\eqref{eqn29}, we see that
\begin{equation}
  G(\theta,r)=\sum_{n=-\infty}^{\infty}\beta_{n-1}\, (2-n) \, r^{|n|}e^{in\theta}
\end{equation}
defines the harmonic extension of this map, and is therefore a
diffeomorphism from $D_1(0)$ onto $D_{\Gamma}.$

\section{Conclusions} \label{sec-conclusions}

In this paper we have presented several algorithms for modifying
polygons and polyhedra into fully regularized surfaces without
geometric singularities (vertices and edges). The original polygon (or
polyhedron) is modified in a controllable and arbitrarily small
neighborhood of its singular set. We have compared the solution to
acoustic scattering problems from the original singular boundary to
that obtained by smoothing the boundary at  sub-wavelength scales in
two dimensions. Both near- and far-field solutions converge at a rate
slightly faster than first-order in the rounding
parameter. Understanding this rate of convergence is an ongoing
research topic in our group.

Constructing numerical codes for performing rounding in two dimensions
is relatively straightforward. We presented results for the polygonal
case; software implementing the rounding of vertices joining
piecewise smooth curves is currently under development, requiring
merely the re-parameterization of the curve near the vertex as a graph
above a support line tangent to the vertex. These computations are
relatively fast, efficient, and accurate to near machine precision in
two dimensions.

We also introduced the analytical foundation for constructing
high-order roundings of polyhedra in three dimensions. Composing
various methods with diffeomorphisms near vertices allows for similar
regularizations to be computed as in the two-dimensional case. Building
more efficient software to perform these computations is a work in
progress.

Preliminary \textsc{Matlab} 
code which performs the vertex and edge smoothing for convex
polyhedra in three dimensions has been made available at:
\begin{center}
\url{http://gitlab.com/oneilm/rounding}
\end{center}

If only approximate scattering solutions are required to the true
problem involving geometries with corners and edges, the algorithms of
this paper offers a method to obtain these results with reduced
computational cost and  controlled accuracy. Furthermore,
the methods require nothing other than the usual quadratures for
weakly-singular functions on smooth curves or surfaces. Full
extensions of these smoothing algorithms to three dimensions may have
a wide array of applications in high-order CAD and CAE packages, as
many existing software solutions only allow for twice differentiable
roundings (fillets).

Lastly, we would like to note that the algorithms presented in this
paper for geometric regularization in three dimensions are only one
piece of a larger effort to develop high-order scattering codes for
arbitrary geometries. In three dimensions, all the numerical tools
that are required to solve boundary integral equations are more
expensive and more sophisticated than those in two dimensions. Merely
constructing high-order Nystr\"om-compatible quadratures for the
function $1/|\bx-\by|$ along triangular patches is a relatively recent
result~\cite{bremer_2012c,bremer_2013}. Coupling these schemes with
fast algorithms and high-order triangulations is under active
development.  Performing the analogous convergence studies for
rounding in three dimensions will  be reported at a later
date, after the requisite high-order accurate
computational PDE algorithms have been developed.

\bibliographystyle{siam}
\bibliography{../preprint}

\end{document}